\documentclass[a4paper]{article}
\usepackage{amssymb}
\usepackage{amsthm}
\usepackage{amsmath}
\usepackage{enumerate}
\usepackage[latin2]{inputenc}
\pagenumbering{arabic}
\theoremstyle{plain}
\newtheorem{thm}{Theorem}[section]

\newtheorem{cor}[thm]{Corollary}
\newtheorem{lem}[thm]{Lemma}

\newcommand{\FF}[1]{\mathbb F_{#1}}

\newcommand{\sbs}{\subseteq}

\newcommand{\nor}{\vartriangleleft}

\newcommand{\ud}[1]{\underline{#1}}
\newcommand{\dg}{\delta (g)}
\newcommand{\pg}{\pi (g)}
\newcommand{\monomialbasis}{\ud u_1,\ud u_2,\ldots, \ud u_e}
\newcommand{\mb}{\monomialbasis}
\newcommand{\s}{\scriptstyle}
\newcommand{\twolineindex}[2]{\begin{array}{c}{\scriptstyle #1}\\{\scriptstyle #2}\end{array}} 


\begin{document}
\title{On the Orbits of Solvable Linear Groups} 
\author{Zolt\'an Halasi and K\'aroly Podoski} 
\date{} 
\maketitle 
\begin{abstract}\noindent  Let $G$ be a solvable linear group acting on the
finite vectorpace $V$ and assume that $(|G|,|V|)=1$.  In this paper we
find $x,y\in V$ such that $C_G(x)\cap C_G(y)=1$. In particular, this
answers a question of I. M. Isaacs. We complete some results of
S. Dolphi, A. Seress and T. R. Wolf.
\end{abstract}     
\section{Introduction}
One basic concept for computing with permutation groups is the notion
of a base: For a permutation group $G\leq\textrm{Sym }(\Omega)$ a set
$\{\omega_1,\omega_2,\ldots,\omega_n\}\sbs \Omega$ (or rather an
ordered list) is called a base for $G$ if only the identity fixes all
of the elements of this set. There are a number of algorithms for
permutation groups related to the concept of base, and these algorithms
are faster if the size of the base is small. Hence it is useful to
find small bases to permutation groups.
Of course, we cannot except a good result in general, since
taking the natural action of $S_n$, the minimal size of a base is
$n-1$. On the other hand, there are a number of results if 
$G$ is solvable, the action of $G$ is primitive, or $(|G|,|\Omega|)=1$.

A size of a base of a permutation group $G\leq\textrm{Sym }(\Omega)$ 
is at least $\log |G|/\log |\Omega|$. It is a conjecture of L.\ Pyber
\cite{pyber} that for a primitive permutation group $G$ there is a base of size
less than $C\log |G|/\log |\Omega|$ for some universal constant $C$.  
For solvable groups, there is a more general result:
It was proved by A\'.\ Seress \cite{seress} that all primitive 
solvable permutation group has a base of size at most four. According
to the O'Nan--Scott Theorem, any such group is of affine type. 
However, in general there is no universal upper bound on the 
minimal base size of an affine group.

The situation changes, if we consider coprime affine groups.
A permutation group $G\leq\textrm{Sym }\Omega$ is said coprime, if 
$(|G|,|\Omega|)=1$. It turns out that for coprime affine groups there
is an upper bound for the minimal base size: It was proved by D.\
Gluck K.\ Magaard \cite{gluck&magaard} that any such group has a base
of size at most. As the result of Seress is sharp,
the value of 95 can propably be improved.

Maybe the most examined case when $V$ is a finite vector space, 
$G\leq GL(V)$ is a solvable linear group and
$(|G|,|V|)=1$.  It was asked by I.\ M.\ Isaacs \cite{isaacs} 
whether there always exists a $G$-orbit in $V$ of size at least
$|G|^{1/2}$ for such groups. This follows immediately 
if we find $x,y\in V$ such that $C_G(x)\cap C_G(y)=1$, 
that is, a base of size two.
The existence of such vectors was confirmed by T.\ R.\ Wolf
\cite{wolf} in case of $G$ is supersolvable. Later, in a common work with
A.\ Moreto \cite{moreto} they solved this problem in case 
$|G|$ and $|V|$ are both odd. And S.\ Dolfi \cite{dolfi} proved that
it is enough to assume that  $G$ is odd.

The goal of this paper is to prove the following theorem, which completes
the remaining cases clear.
\begin{thm}\label{thm:main}
Let $V$ be finite vector space over the field of size $p$, where 
$p\neq 2$ is a prime, and let $G\leq GL(V)$ be a solvable linear group
with the assumption $(|G|,|V|)=1$. Then there exist $x,y\in V$ such that  
$C_G(x)\cap C_G(y)=1$.
\end{thm}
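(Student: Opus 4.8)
\noindent The plan is to rephrase the assertion as the statement that $G$ has a \emph{regular orbit} on $W:=V\oplus V$ under the diagonal action: for $(x,y)\in W$ the stabilizer of $(x,y)$ in $G$ is precisely $C_G(x)\cap C_G(y)$, so the theorem is equivalent to $\bigcup_{1\neq g\in G}C_W(g)\neq W$, where it suffices to let $g$ run over elements of prime order. First I would reduce to the case that $V$ is a faithful irreducible $\FF p G$-module. Since $p\nmid|G|$, $V$ is completely reducible; writing $V=V_1\oplus\cdots\oplus V_n$ with each $V_i$ irreducible and $N_i=C_G(V_i)$, the group $G/N_i$ acts faithfully, irreducibly and coprimely on $V_i$, with $\dim V_i<\dim V$ unless $n=1$. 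Inducting on $\dim V$, pick $x_i,y_i\in V_i$ with $C_G(x_i)\cap C_G(y_i)=N_i$; then $x=\sum_i x_i$, $y=\sum_i y_i$ satisfy $C_G(x)\cap C_G(y)=\bigcap_i N_i=1$, using $C_G(v)=\bigcap_i C_G(v_i)$ for $v=\sum_i v_i$ (valid since $G$ stabilizes each $V_i$) and faithfulness of $G$ on $V$. The case $\dim V=1$ is trivial, as any nonzero $x$ has $C_G(x)=1$.

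Next I would reduce to the case that $V$ is primitive (equivalently, quasi-primitive). If $V=V_1\oplus\cdots\oplus V_k$ is a maximal system of imprimitivity with $k\geq 2$, then $G$ permutes the blocks transitively, the stabilizer $H:=\mathrm{Stab}_G(V_1)$ acts primitively on $V_1$, and $V=\mathrm{Ind}_H^G V_1$. A regular orbit of $G$ on $V\oplus V$ is assembled from a regular orbit of $H/C_H(V_1)$ on $V_1\oplus V_1$ (available by induction on $\dim V$, since $\dim V_1<\dim V$ and coprimality is inherited) together with a combinatorial argument controlling which block-permuting elements of $G$ can fix a suitably chosen vector --- a standard but somewhat delicate wreath-product reduction (in particular one must check there are enough $H$-orbits on $V_1$ to separate the blocks).

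For the primitive case I would invoke the well-known structure theory of primitive solvable linear groups: writing $F:=F(G)$ and $Z:=Z(F)=Z(G)$, the group $Z$ is cyclic and acts by scalars, $F/Z$ is a direct product of groups of symplectic type --- extraspecial groups of order $r_i^{1+2n_i}$ together with possible central factors of order $4$ --- one has $C_G(F)=Z$, the quotient $G/F$ embeds in $\prod_i\mathrm{Sp}(2n_i,r_i)$ up to a small field-automorphism contribution, and $V$ as an $F$-module is a multiple (with multiplicity $m$, say) of the unique faithful irreducible $F$-module of dimension $e:=\prod_i r_i^{n_i}$ over the relevant extension field, so $|V|=p^{mbe}$ for some $b\geq 1$. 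The core step is then a covering estimate: the bad set lies in $\bigcup_{\langle g\rangle}C_V(g)\oplus C_V(g)$ (over cyclic subgroups of prime order), hence has size at most $\sum_{\langle g\rangle}(|g|-1)\,|C_V(g)|^2$, and one must show this is $<|V|^2$. For $1\neq g\in F$ of order $r$, the extraspecial representation theory gives $\dim C_V(g)=(\dim V)/r$, so $|C_V(g)|=|V|^{1/r}$, and the number of such $g$ is at most $|F|$. For $g\notin F$, the faithful action of $G/F$ on $F/Z$ forces $g$ to act nontrivially on $F/Z$, so $|C_{F/Z}(g)|\leq|F/Z|/r_i$ for some $i$; combined with the character bound $|\chi_V(g)|^2\leq|C_{F/Z}(g)|$ (valid because $V$ is built from the faithful irreducible of the symplectic-type group $F$) and $\dim C_V(g)=\frac{1}{|g|}\sum_j\chi_V(g^j)$, this gives $|C_V(g)|\leq|V|^{c}$ with $c<1$ bounded away from $1$; the number of such $g$ is at most $|G|\leq|Z|\cdot|F/Z|\cdot\prod_i|\mathrm{Sp}(2n_i,r_i)|\cdot b$. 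Feeding in $|V|=p^{mbe}$ and the crude bound $|\mathrm{Sp}(2n,r)|<r^{2n^2+n}$, the required inequality goes through for all configurations except a short explicit list of ``small'' ones.

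The main obstacle, and the place where the hypothesis $p\neq 2$ is essential, is exactly these finitely many exceptional configurations: there the governing inequality (roughly $r^{1+2n}<p^{2e(1-1/r)}$ for the dominant extraspecial factor, with an analogous term absorbing $G/F$) is only barely satisfied when $r=2$, $n=1$, $p=3$, and fails for $p=2$ --- harmless here, since coprimality with $p=2$ forces $|F|$ to be odd, ruling out factors with $r=2$, and that case is in any event covered by Dolfi \cite{dolfi}. Each exceptional configuration puts $G$ inside a small linear group with $|V|$ bounded (e.g. $G\leq GL(2,3)$, $G\leq GL(3,3)$, $G\leq GL(4,3)$, and a few more) and has to be finished off by hand --- by exhibiting $x,y$ explicitly, by an orbit count, or by quoting the known results for supersolvable $G$ \cite{wolf} and for $G$ of odd order \cite{moreto}, \cite{dolfi}. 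Pinning down exactly which parameter tuples $(r_i,n_i,p,b,m)$ are exceptional, and disposing of the ``mixed'' cases in which an exceptional factor coexists with other factors of $F/Z$, is the most delicate part of the argument.
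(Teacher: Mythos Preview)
Your overall architecture (reduce to irreducible, then to primitive, then analyze the primitive case via the Fitting subgroup) matches the paper's, but both the imprimitive reduction and the primitive analysis are handled very differently, and in each place your sketch has a real gap.

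\medskip

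\textbf{The imprimitive reduction.} You describe this as ``a standard but somewhat delicate wreath-product reduction'' and say one must check ``there are enough $H$-orbits on $V_1$ to separate the blocks.'' This is where the argument actually bites. The number $k$ of blocks can be enormous while $V_1$ is tiny (e.g.\ $\dim V_1=1$), so there are not enough $H$-orbits on $V_1\oplus V_1$ to label the blocks distinctly. The paper's solution is not an orbit-counting trick at all: it takes a single good pair $(x_1,y_1)\in V_1$, transports it to $(x_i,y_i)$ in each block, and then perturbs to $(x_i,\,y_i+a_ix_i)$ with $a_i\in\FF p$. A block-moving element $g$ with $gV_i=V_j$ fixing both $\sum x_i$ and $\sum(y_i+a_ix_i)$ is shown, via a small unipotent computation using $(|G|,p)=1$, to force $a_i=a_j$. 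Thus one needs a colouring $(a_1,\ldots,a_k)\in\FF p^{\,k}$ whose only colour-preserving permutation in $G/N$ is the identity, i.e.\ a $G/N$-regular partition of the block set into at most $p$ parts. Proving that every solvable $p'$-permutation group admits such a partition is the content of an entire section of the paper (explicit partitions for affine groups, then an inductive gluing over a block system). This is not a routine check, and your proposal does not supply it.

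\medskip

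\textbf{The primitive case.} Here your approach is genuinely different from the paper's. You propose a covering estimate $\sum_{1\neq g}|C_V(g)|^2<|V|^2$, using that the faithful irreducible character of an extraspecial group vanishes off the centre together with a bound $|\chi_V(g)|^2\le |C_{F/Z}(g)|$ for $g\notin F$. The paper does none of this: it builds an explicit ``almost monomial'' basis $\ud u_1,\ldots,\ud u_e$ adapted to a decomposition $F=(A\times D_0)\rtimes S$, chooses $x$ so that $C_G(x)$ is forced to be monomial in this basis, and then reads off the permutation part $\pi(C_G(x))$ as a subgroup of $\mathrm{Aut}(S)\cong\prod_iGL(e_i,p_i)$, to which the affine regular-partition constructions of Section~2 are applied once more to manufacture $y$. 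The advantage of the paper's route is that it never leaves a residual list of exceptions to be cleared by computer or case analysis beyond a handful of explicit $4$-dimensional situations; the constructions are uniform. Your counting route is plausible in spirit, but two points are not under control in the sketch: first, the passage from $|\chi_V(g)|^2\le|C_{F/Z}(g)|$ to a fixed exponent $c<1$ in $|C_V(g)|\le|V|^c$ gives at best $c$ close to $1/\sqrt{2}$, and then $|G|<|V|^{2(1-c)}$ is \emph{not} generally true for solvable $G$ (P\'alfy--Wolf type bounds allow $|G|$ up to roughly $|V|^{2.25}$), so the inequality fails far outside a ``short explicit list''; second, you never identify or treat the exceptional parameters. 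As written, the primitive step is an outline of a strategy rather than a proof.

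\medskip

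A minor point: your aside that ``coprimality with $p=2$ forces $|F|$ to be odd, ruling out factors with $r=2$'' is beside the point here, since the theorem assumes $p\neq2$; the extraspecial $2$-factors in $F$ are very much present (indeed the case $e=2^t$ is the most delicate one in the paper) and cannot be dismissed.
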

Using Hartley--Turull Lemma \cite{hartley} this yields

\begin{thm}
Let $G$ be a finite group acting faithfully on a finite group $K$ such that $(|G|,|K|)=1$. 
Then there exist $x,y\in K$ such that  
$C_G(x)\cap C_G(y)=1$.

\end{thm}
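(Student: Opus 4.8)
The plan is to deduce this from Theorem~\ref{thm:main} (and, when $|G|$ is odd, from Dolfi's theorem) by a formal argument based on the Hartley--Turull Lemma. The first step is to pass from the group $K$ to its chief factors. Fix a $G$-invariant chief series $1=K_0\nor K_1\nor\cdots\nor K_n=K$ and let $W=\prod_{i=1}^{n}K_i/K_{i-1}$ be the direct product of its chief factors, with $G$ acting coordinatewise in the natural way. The prime divisors of $|W|$ are exactly those of $|K|$, so this new action is again coprime; and since $(|G|,|K|)=1$, an element of $G$ acting trivially on every $K_i/K_{i-1}$ acts trivially on $K$, so the action on $W$ is still faithful. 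The Hartley--Turull Lemma then guarantees that a base of size two for the action of $G$ on $W$ produces a base of size two for the action on $K$ (concretely: for any $v,w\in W$ one obtains $x,y\in K$ with $C_G(x)\cap C_G(y)$ equal, up to $G$-conjugacy, to $C_G(v)\cap C_G(w)$, and conjugacy is immaterial once the intersection is trivial). So it suffices to find $v,w\in W$ with $C_G(v)\cap C_G(w)=1$.

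The second step reduces the problem on $W$ to Theorem~\ref{thm:main}, working prime by prime. If $|G|$ is odd, one simply replaces Theorem~\ref{thm:main} by Dolfi's theorem in what follows. Otherwise $|G|$ is even, so $(|G|,|K|)=1$ forces $|K|$, and hence $|W|$, to be odd; then every chief factor of $K$ is an elementary abelian group of odd prime-power order and $W$ is a finite vector space. Write $W=\bigoplus_{p}W_p$, where $p$ ranges over the primes dividing $|K|$ and $W_p$ is the sum of those chief factors that are $p$-groups, so that each $W_p$ is a finite vector space over $\FF p$ with $p\neq 2$ carrying a coprime action of $G$. Put $N_p=C_G(W_p)$; then $G/N_p$ is a solvable subgroup of $GL(W_p)$ with $(|G/N_p|,p)=1$, so Theorem~\ref{thm:main} yields $x_p,y_p\in W_p$ with $C_{G/N_p}(x_p)\cap C_{G/N_p}(y_p)=1$. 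Since $N_p$ centralizes $W_p$, pulling this back to $G$ gives $C_G(x_p)\cap C_G(y_p)=N_p$. Moreover $\bigcap_{p}N_p=C_G(W)=1$ because the action of $G$ on $W$ is faithful.

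The final step is to assemble the pieces: set $v=(x_p)_p$ and $w=(y_p)_p$ in $W=\bigoplus_{p}W_p$. Since $G$ acts on $W$ as the direct sum of its actions on the $W_p$, the stabilizer of a tuple is the intersection of the stabilizers of its entries, so
\[
 C_G(v)\cap C_G(w)=\Bigl(\bigcap_{p}C_G(x_p)\Bigr)\cap\Bigl(\bigcap_{p}C_G(y_p)\Bigr)=\bigcap_{p}\bigl(C_G(x_p)\cap C_G(y_p)\bigr)=\bigcap_{p}N_p=1 .
\]
By the first step this produces $x,y\in K$ with $C_G(x)\cap C_G(y)=1$, as required. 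I expect no serious obstacle here: all of the real work is in Theorem~\ref{thm:main}, and the only points needing care are quoting the Hartley--Turull Lemma in exactly the form relating point stabilizers on $K$ to those on the direct product of its chief factors, and the routine device of passing to $G/C_G(W_p)$ to make the action on a single homogeneous component faithful before invoking Theorem~\ref{thm:main}. Accordingly this deduction should be short.
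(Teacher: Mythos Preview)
Your argument is correct and is exactly what the paper intends: the paper simply writes ``Using Hartley--Turull Lemma this yields'' after Theorem~\ref{thm:main}, and your proposal fills in precisely those details---pass to the direct product of $G$-chief factors via Hartley--Turull, then apply Theorem~\ref{thm:main} (or Dolfi's theorem when $p=2$) to each $p$-primary component and intersect. One point to tighten: you assert that $G/N_p$ is solvable in order to invoke Theorem~\ref{thm:main}, but the statement of the theorem as printed carries no solvability hypothesis on $G$; the paper clearly intends $G$ to be solvable (as everywhere else in the paper), so either add that hypothesis explicitly or note that in the even-$|G|$ case your argument uses it.
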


\section{Regular partitions for solvable permutation groups}\label{2bas:perm}
Throughout this section let $\Omega$ be a finite set and let
$G\leq\textup{Sym}(\Omega)$ be a solvable permutation group. For a subset
$X\sbs \Omega$ let $G(X)$ denote the set-wise stabilizer of $X$
in $G$-ben, that is, 
$G(X)=\{g\in G\;|\; gx\in X \textrm{ for all } x\in X\}$.
We say that the partition $\{\Omega_1,\Omega_2,\ldots,\Omega_k\}$ of
$\Omega$ is $G$-regular, if only the identity element of $G$ 
fixes all elements of this partition, i.e., if 
$\cap_{i=1}^k G(\Omega_i)=1$.\par
With the additional assumption that $G$ is a $p'$-group, one goal of
this section is to show a $G$-regular partition of $\Omega$ to at most
$p$ parts. Such a partition will be used in section 4 to reduce the
problem to primitive linear groups.
Moreover, our constructions for primitive permutation groups will be
useful even in the discussion of the primitvive linear case.
Since a primitive solvable permutation group is of affine type,
first we construct such partitions for affine groups. \par
\begin{thm}\label{thm:affin_perm}
Let $W$ be an $n$ dimensional vectorspace over the $q$-element field
($q$ is prime), and let $AGL(W)$ denote the full affine group acting on
$W$. Furthermore, let $G=W\rtimes G_0\leq AGL(W)$ be any subgroup.
If $\{\ud e_1,\ud e_2,\ldots,\ud e_n\}$ is a basis
of $W$, then, depending on $n$ and $q$, the 
following partitions are $G$-regular.
\begin{enumerate}
\item[]Case 1: $|W|\leq 4$\par
Take the trivial partition, that is, each element of the partition
consists of a single element.
\item[]Case 2: $n=1,\ q\geq 5$
\begin{gather*}
 \Omega_1=\{\ud 0\},\quad \Omega_2=\{\ud e_1\},\quad 
\Omega_3=V\setminus(\Omega_1\cup\Omega_2).
\end{gather*}
\item[]Case 3: $n\geq 2,\ q\geq 5$
\begin{gather*}
\Omega_1=\{\ud 0\},\\ 
\Omega_2=\{\ud e_1,2\ud e_1,\ud e_2,\ud e_3,\ldots,\ud e_n,
\ud e_1+\ud e_2,\ud e_2+\ud e_3,\ldots,\ud e_{n-1}+\ud e_n\},\\
\Omega_3=V\setminus(\Omega_1\cup\Omega_2).
\end{gather*}
\item[]Case 4: $n\geq 2,\ q=3$
\begin{gather*}
\Omega_1=\{\ud 0\},\ \Omega_2=\{\ud e_1\},\
\Omega_3=\{\ud e_2,\ud e_3,\ldots,\ud e_n,\ud e_1+\ud e_2,
\ud e_2+\ud e_3,\ldots,\ud e_{n-1}+\ud e_n\},\\
\Omega_4=V\setminus(\Omega_1\cup\Omega_2\cup\Omega_3).
\end{gather*}
\item[]Case 5: $n=3,\ q=2$
\begin{gather*}
\Omega_1=\{\ud 0\},\quad
\Omega_2=\{\ud e_1\},\quad
\Omega_3=\{\ud e_2\},\quad
\Omega_4=\{\ud e_3\},\\
\Omega_5=V\setminus(\Omega_1\cup\Omega_2\cup\Omega_3\cup\Omega_4).
\end{gather*}
\item[]Case 6: $n\geq 4,\ q=2$
\begin{gather*}
\Omega_1=\{\ud 0\},\quad
\Omega_2=\{\ud e_1\},\quad
\Omega_3=\{\ud e_2\},\\
\Omega_4=\{\ud e_3,\ldots,\ud e_n,
\ud e_3+\ud e_4,\ud e_4+\ud e_5,\ldots,\ud
e_{n-1}+\ud e_n,\ud e_3+\ud e_2,\ud e_n+\ud e_1\},\\
\Omega_5=V\setminus(\Omega_1\cup\Omega_2\cup\Omega_3\cup\Omega_4).
\end{gather*}
\item[]Case 7: $n=2,\ q=2$, and the order of $|G|$ is not divisible
by $3$\par
Let $\Omega_1=\{\ud 0\}$.
The action of $G(\Omega_1)$ on $V\setminus \Omega_1=\{\ud e_1,\ud
e_2,\ud e_1+e_2\}$ cannot be transitive, so it has a fix point
in $V\setminus \Omega_1$, say, $\ud e_1$. Then 
\begin{gather*}
\Omega_1=\{\ud 0\},\quad \Omega_2=\{\ud e_2\},\quad
\Omega_3=V\setminus(\Omega_1\cup\Omega_2).
\end{gather*} 
is $G$-regular.
\item[]Case 8: $n=3,\ q=2$, and the order of $|G|$ is not
divisible by $3$\par
Let $\Omega_1=\{\ud e_1,\ud e_2,\ud e_1+\ud
e_2\}$.  The action of $G(\Omega_1)$ on $\Omega_1$ cannot be
transitive, so it has a fix point in $\Omega_1$, say, $\ud
e_1$.\par
If $|G_0|$ is not divisible by $4$, then there exists an $\ud x\in
V\setminus(\Omega_1\cup\{\ud 0\})$, such that 
\begin{gather*}
\Omega_1=\{\ud e_1,\ud e_2,\ud e_1+\ud e_2\},\quad 
\Omega_2=\{\ud 0,\ud x\},\quad
\Omega_3=V\setminus(\Omega_1\cup\Omega_2).
\end{gather*} 
is $G$-regular.\par
Otherwise, we can assume that $G_0$ is contained in the group of upper
unitriangular matrices. In this case
\begin{gather*}
\Omega_1=\{\ud e_1,\ud e_3,\ud e_1+\ud e_3\},\quad 
\Omega_2=\{\ud e_2,\ud e_2+\ud e_3\},\quad
\Omega_3=V\setminus(\Omega_1\cup\Omega_2).
\end{gather*} 
is $G$-regular.
\item[]Case 9: $n\geq 4,\ q=2$, and the order of $|G|$ is not
divisible by $3$\par
Let $\Omega_1=\{\ud e_1,\ud e_2,\ud e_1+\ud
e_2\}$.  The action of $G(\Omega_1)$ on $\Omega_1$ cannot be
transitive, so it has a fix point in $\Omega_1$, say, $\ud
e_1$. Then the following partition is $G$-regular.
\begin{gather*}
\Omega_1=\{\ud e_1,\ud e_2,\ud e_1+\ud e_2\},\\
\Omega_2=\{\ud e_3,\ud e_4,\ldots,\ud e_n,\ud e_3+\ud
e_4,\ldots,\ud e_{n-1}+\ud e_n, \ud e_3+\ud e_2,\ud e_n+\ud e_1\},\\
\Omega_3=V\setminus(\Omega_1\cup\Omega_2).
\end{gather*}
\end{enumerate}
\end{thm}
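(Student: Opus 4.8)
The plan is to treat the nine cases uniformly: use the singleton parts of each displayed partition to cut the problem down to a linear-algebra statement, and then settle that statement by a short combinatorial argument about the ways a linear map can permute a set containing a basis. (Throughout, one first checks routinely that the listed sets really do form a partition, i.e.\ that the displayed vectors are pairwise distinct and nonzero, which is clear in each case.)

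First the reduction. An element $g\in AGL(W)$ acts as $\ud v\mapsto A\ud v+\ud b$ with $A\in GL(W)$ and $\ud b\in W$, and it stabilises $\{\ud 0\}$ exactly when $\ud b=\ud 0$, i.e.\ when $g=A$ is linear. In Cases 1--7 the singleton $\{\ud 0\}$ is one of the parts, so any $g$ fixing all the parts is linear. In Cases 7--9 one also uses that, since $3\nmid|G|$, the group $G(\Omega_1)$ cannot act transitively on the relevant three-element set (transitivity would force $3\mid|G|$) and so fixes a point there, which is named $\ud e_1$; any $g$ fixing all parts lies in $G(\Omega_1)$ and hence fixes $\ud e_1$. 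In Cases 8--9, where $\ud 0\notin\Omega_1$, linearity of $g$ instead follows from the remark that an affine map stabilising $\{\ud e_1,\ud e_2,\ud e_1+\ud e_2\}=\langle\ud e_1,\ud e_2\rangle\setminus\{\ud 0\}$ must fix $\ud 0$ as well (add up the three images, using $\kar=2$). Finally, each singleton part $\{\ud e_i\}$ forces $A\ud e_i=\ud e_i$ outright. So in every case it remains to prove: if a linear $A$ permutes the set $S$ obtained by adjoining the remaining (non-singleton) part(s) to the already-fixed basis vectors, then $A$ fixes every $\ud e_i$ --- and since the $\ud e_i$ form a basis, this gives $A=I$, hence $g=1$.

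For that last step I would use three invariants of the permutation action of $A$ on $S$. (i) A linear map preserves proportionality; for $q\ge5$ the only proportional pair inside $\Omega_2$ is $\{\ud e_1,2\ud e_1\}$, and from $A(2\ud e_1)=2A(\ud e_1)$ together with $4\ud e_1\notin\Omega_2$ one gets $A\ud e_1=\ud e_1$ (for $q=3$ this only yields $A\ud e_1=\pm\ud e_1$, which is why $\ud e_1$ is then put into its own part). (ii) $A$ preserves the set of vectors of $S$ that are a sum of two \emph{distinct} elements of $S$; inspecting $\ud e_i$-coordinates shows this set is precisely the ``path edges'' $\ud e_i+\ud e_{i+1}$ occurring in $S$, with any boundary sum such as $\ud e_1+\ud e_2$, $\ud e_2+\ud e_3$ or $\ud e_1+\ud e_n$ excluded (each of those has a coordinate that cannot arise as such a sum). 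Hence the induced action of $A$ on the basis vectors involved is an automorphism of the path graph $1-2-\cdots$ read off from these edges. (iii) Once one endpoint of that path is pinned --- by (i) when $q\ge5$, or, when $q\le3$, because $\ud e_1$ (and, when $q=2$, also $\ud e_2$) lie in singleton parts while the extra sums $\ud e_1+\ud e_2$, $\ud e_2+\ud e_3$, $\ud e_1+\ud e_n$ tie the path to them --- the only path automorphism fixing that endpoint is the identity, so $A$ fixes all $\ud e_i$. Cases 1, 2, 5 need nothing past the reduction (the partition is essentially into singletons, or $W$ is one-dimensional), and Case 8 carries an internal split on whether $4\mid|G_0|$: in one branch a suitably chosen second point is adjoined to $\{\ud 0\}$, in the other one may take $G_0$ upper unitriangular and verify the displayed partition by hand.

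I expect the real obstacle to be the characteristic-two cases (Cases 6, 8, 9). There invariant (i) is vacuous, and --- worse --- over $\FF 2$ a relation such as $A\ud e_2=\ud e_2$ together with $A(\ud e_2+\ud e_3)=\ud e_3$ is perfectly consistent with $A\ud e_3=\ud e_2+\ud e_3$, so anchoring the path at a single end does not rigidify it. This is precisely why those constructions tie the path $\ud e_3-\cdots-\ud e_n$ to fixed vectors at \emph{both} ends and throw in the closing sum $\ud e_1+\ud e_n$; the computation that has to be done carefully is that no vector of the form $\ud e_2+\ud e_3+\ud e_j$ or $\ud e_1+\ud e_j+\ud e_n$ lies in the relevant part, which is what forbids any nontrivial motion of the ends of the path (and, in Cases 8--9, also forces the residual freedom $\ud e_2\mapsto\ud e_1+\ud e_2$ to collapse).
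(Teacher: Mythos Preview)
Your proposal is correct and takes essentially the same approach as the paper. The paper writes out only Case 9, arguing inductively along the chain $\ud e_3,\ud e_4,\ldots,\ud e_n$ by checking at each step that the relevant two- and three-term sums do not lie in $\Omega_2$; your outline does the same thing (and correctly isolates the characteristic-two subtlety), with the added organizational device of the ``sum of two distinct elements of $S$'' invariant, which for $q\ge 3$ cleanly separates the basis vectors from the path edges before the path-automorphism step.
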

\begin{proof} We show the $G$-regularity of the given partition 
only in Case \textsl{9}. In the remaining cases one can prove the same 
by using similar but simpler arguments.\par
Our first observation is that
$G(\Omega_1)$ fixes $\ud 0$, since $\Omega_1\cup \{\ud 0\}$
is the only $2$-dimensional affine subspace containing $\Omega_1$.
Hence $G(\Omega_1)\leq GL(W)$.
Let $g\in G(\Omega_1)\cap G(\Omega_2)$. We claim that $g(\ud
e_i)=\ud e_i$ for all $1\leq i \leq n$-re, that is, $g=1$.
First, $g(\ud e_2)=\ud
e_2$ or $g(\ud e_2)=\ud e_1+\ud e_2$ by our assumption to $\ud e_1$. 
In the second case $g(\ud e_3)\in\Omega_2$ and $g(\ud
e_2+\ud e_3)=\ud e_1+\ud e_2+g(\ud e_3)\in\Omega_2$. It is easy to
check that there is no $\ud x\in\Omega_2$ such that $\ud
e_1+\ud e_2+\ud x\in\Omega_2$. (Here we need $n\geq 4$). 
So $g(\ud e_2)=(\ud e_2)$.\par
To prove that $g(\ud e_k)=\ud e_k$ for all $3\leq k<n$ we use
induction to $k$. Assuming that $g(\ud e_i)=\ud
e_i$ for all $1\leq i<k<n$, it follows that $g(\ud e_k)$ and $g(\ud
e_{k-1}+\ud e_k)$ are elements of the set
\[
\Omega_2\setminus\left<\ud e_1,\ldots,\ud e_{k-1}\right>=\{\ud e_k,\ud
e_{k+1},\ldots,\ud e_n,\ud e_{k-1}+\ud e_k,\ldots,\ud e_{n-1}+\ud
e_{n},\ud e_n+\ud e_2\}.
\]
Since $g(\ud e_k)+g(\ud e_{k-1}+\ud e_k)=\ud e_{k-1}$, we have either
$g(\ud e_k)$ or $g(\ud e_{k-1}+\ud e_k)$ contains $\ud e_{k-1}$ with
non-zero coefficient. However, the only such element in the set
$\Omega_2\setminus\left<\ud e_1,\ldots,\ud e_{k-1}\right>$
is $\ud e_{k-1}+\ud e_k$. 
So either $g(\ud e_{k-1}+\ud e_k)=\ud e_{k-1}+\ud e_k$ or
$g(\ud e_k)=\ud e_{k-1}+\ud e_k$. In the latter case
$\ud e_k+\ud e_{k+1}\in \Omega_2$, since $k<n$, but $g(\ud e_k+\ud
e_{k+1})\not\in\Omega_2$, a contradiction. It follows that 
$g(\ud e_{k-1}+\ud e_k)=\ud e_{k-1}+\ud e_k$, 
which proves that $g(\ud e_k)=\ud e_k$. 
It remains to prove that $g(\ud e_n)=\ud e_n$.
It is clear that
\[
g(\ud e_n)\in\Omega_2\setminus\left<\ud e_1,\ud e_2,\ldots,\ud e_{n-1}\right>=
\{\ud e_n,\ud e_{n-1}+\ud e_n, \ud e_n+\ud e_1\}.
\]
If $g(\ud e_n)=\ud e_{n-1}+\ud e_n$, then
$g(\ud e_n+\ud e_1)=\ud e_{n-1}+\ud e_n+\ud e_1\not\in\Omega_2$. 
If $g(\ud e_n)=\ud e_{n}+\ud e_1$, then 
$g(\ud e_{n-1}+\ud e_n)=\ud e_{n-1}+\ud e_{n}+\ud e_1\not\in\Omega_2$.
Thus $g(\ud e_n)=\ud e_n$ also holds. So
$G(\Omega_1)\cap G(\Omega_2)=1$, that is, the given partition is $G$-regular.
\end{proof}
These constructions have some properties, which will be important
later. We summarize them in the following corollary.
\begin{cor}\label{kov:affin_perm}
If $W\leq G\leq AGL(W)$ is an affine group,
$p\geq 3$ prime, and $p$ does not divide the order of $G$, then there
exists a $G$-regular partition of $W$ into at most $p$ parts. 
Moreover, this partition has the following properties.
\begin{enumerate}
\item 
 In Case 1 the partition is trivial and it consists of at most $p-1$ parts.
 In any other case there is a part of ``unique size'', that is, a part
 $\Omega_i$ such that $|\Omega_i|\neq |\Omega_j|$ if $i\neq j$. 
\item
Apart from a few exceptions, there is a ``large'' part.
More precisely, the following inequalities holds:
\begin{gather}
\begin{array}{lll}
\textrm{In Case 2}:&|\Omega_2|=1<\frac{1}{4}|W|;&\\
\textrm{In Case 3}:&|\Omega_2|=2n<\frac{1}{4}5^n\leq\frac{1}{4}|W|;&\\
\textrm{In Case 4}:&|\Omega_3|+2=2n<\frac{1}{4}3^n=\frac{1}{4}|W|,
&\textrm{if }n\geq 3;\\
\textrm{In Case 5}:&|\Omega_4|=1<\frac{1}{4}|W|;&\\
\textrm{In Case 6}:&|\Omega_4|=2n-3<\frac{1}{4}2^n=\frac{1}{4}|W|,
&\textrm{if }n\geq 5.
\end{array}
\end{gather}
So, one of the above inequalities holds, unless 
$|W|=2,3,4,9$ or $16$. 
\end{enumerate}
\end{cor}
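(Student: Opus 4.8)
The plan is to extract everything directly from the explicit partitions of Theorem~\ref{thm:affin_perm}, so the proof is pure bookkeeping over its nine cases. The one structural input I would use first is that $W$ is the translation subgroup of $G$, so $|W|=q^n$ divides $|G|$ and hence $p\neq q$; in particular $p\geq 5$ whenever $q\geq 3$. This dictates the choice of construction: if $q\geq 3$ I take the relevant one of Cases 1--4; if $q=2$ and $3\mid|G|$ I take the relevant one of Cases 1, 5, 6; and if $q=2$ and $3\nmid|G|$ I take instead the partition of Case 7, 8 or 9 according to $n=2$, $n=3$, $n\geq 4$ (and Case 1 when $n=1$). So the only overlap to resolve by hand is $|W|=4$, where I use Case 7 when $3\nmid|G|$ and Case 1 (necessarily with $3\mid|G|$, hence $p\geq 5$) otherwise.

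Next I would count parts. Cases 2, 3, 7, 8, 9 each give exactly $3$ parts, fine for every $p\geq 3$. Case 1 gives $|W|$ parts with $|W|\leq 4$: if $|W|=2$ then $2\leq p-1$ since $p\geq 3$; if $|W|=3$ then $q=3\mid|G|$ so $p\geq 5$; if $|W|=4$ we only land in Case 1 when $3\mid|G|$, again $p\geq 5$; thus $|W|\leq 4\leq p-1$, which also proves the first half of (1). Case 4 has $4$ parts and is used only when $q=3\mid|G|$, so $p\geq 5$. Cases 5 and 6 have $5$ parts and are used only when $3\mid|G|$ (otherwise Case 8, resp.\ Case 9, was taken), so again $p\geq 5$. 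Hence the partition always has at most $p$ parts.

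For the ``unique size'' assertion in (1), I would simply tabulate the part cardinalities: $(1,1,q-2)$ in Case 2; $(1,2n,q^n-1-2n)$ in Case 3; $(1,1,2n-2,3^n-2n)$ in Case 4; $(1,1,1,1,4)$ in Case 5; $(1,1,1,2n-3,2^n-2n)$ in Case 6; $(1,1,2)$ in Case 7; $(3,2,3)$ in both subcases of Case 8; and $(3,2n-3,2^n-2n)$ in Case 9. In each list one value occurs exactly once: $\Omega_3$ in Cases 2 and 7, $\Omega_1$ in Cases 3 and 9, the part of size $2n-2$ in Case 4, $\Omega_5$ in Case 5, $\Omega_4$ in Case 6, $\Omega_2$ in Case 8. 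Checking these are genuinely distinct uses only the crude bounds $q\geq 5$ (Cases 2, 3), $n\geq 2$ (Cases 3, 4, 9) and $n\geq 4$ (Cases 6, 9), which I would verify separate the listed values.

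Finally, (2) is the set of elementary estimates $|\Omega_2|=1<\tfrac14 q=\tfrac14|W|$ (Case 2); $|\Omega_2|=2n<\tfrac14\cdot 5^n\leq\tfrac14|W|$ (Case 3, using $q^n\geq 5^n$); $|\Omega_3|+2=2n<\tfrac14\cdot 3^n=\tfrac14|W|$ for $n\geq 3$ (Case 4); $|\Omega_4|=1<2=\tfrac14|W|$ (Case 5); and $|\Omega_4|=2n-3<2^{n-2}=\tfrac14|W|$ for $n\geq 5$ (Case 6). Each reduces to one of $8n<5^n$ ($n\geq 2$), $8n<3^n$ ($n\geq 3$), $2n-3<2^{n-2}$ ($n\geq 5$), all one-line inductions. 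Tracking when none of these applies gives exactly Case 1 (so $|W|\in\{2,3,4\}$), Case 4 with $n=2$ (so $|W|=9$), and Case 6 with $n=4$ (so $|W|=16$), which is the stated exceptional list. I do not expect a genuine obstacle anywhere: the only thing needing care is the organisation of the case split in the first two paragraphs (which construction for which $p$), and consistently invoking ``$3\mid|G|\Rightarrow p\geq 5$'' at every construction that produces more than three parts.
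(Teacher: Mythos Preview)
Your proof is correct and follows essentially the same approach as the paper: the paper's argument is simply that $q\neq p$ (since $W\leq G$), that for $p=3$ one therefore lands in Cases 1, 2, 3, 7, 8, or 9 (each with at most three parts), while for $p\geq 5$ all nine cases give at most five parts, with the remaining assertions left as ``easily checked.'' You have accurately filled in that bookkeeping, including the $|W|=4$ overlap and the size tabulations for the unique-size claim.
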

\begin{proof}
If $W$ is a vector space over the $q$-element field, then $(q,p)=1$, since
$G\geq W$. Now, if $p=3$, then $q\neq 3$, so one of the cases 
\textsl{1., 2., 3., 7., 8.,} or \textsl{9.} holds, and in these cases
the given partition is of 3 parts.
If $p\neq 3$, then $p\geq 5$. Even in the remaining cases the given
partition is of at most 5 parts. The remaining parts of the statement
can be easily checked.
\end{proof}
Using the first part of this Corollary we can prove the existence
of the wanted $G$-regular partition for any solvable $p'$-group.
\begin{thm}\label{thm:perm}
Let $G\leq\textup{Sym}(\Omega)$ be a solvable permutation group, 
Assuming that the order of $G$ is not divisible by $p$, there exists
a $G$-regular partition of $\Omega$ into at most $p$ parts.
\end{thm}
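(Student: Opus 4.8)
The plan is to induct on $|\Omega|$, peeling off one layer of the action at a time until we reach a primitive group, which is of affine type and so is handled by Corollary~\ref{kov:affin_perm}. The only genuinely hard point will be the imprimitive transitive case, where two partitions (one coming from the quotient action on a block system, one from the action inside a single block) must be amalgamated into a single partition with at most $p$ parts; the ``unique size'' and ``large part'' properties recorded in Corollary~\ref{kov:affin_perm} are exactly what is needed for this. As a base case, if $|\Omega|\le p$ the partition into singletons has at most $p$ parts and is trivially $G$-regular.

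First I would reduce to the transitive case. Suppose $\Delta\subsetneq\Omega$ is a $G$-orbit and let $K\trianglelefteq G$ be the kernel of the action of $G$ on $\Delta$. Then $G/K$ is a solvable $p'$-group acting faithfully on $\Delta$, and $K$ is a solvable $p'$-group acting faithfully on $\Omega\setminus\Delta$ (it already acts trivially on $\Delta$). Both sets are smaller than $\Omega$, so by induction there are a $(G/K)$-regular partition $\{\Delta_1,\dots,\Delta_a\}$ of $\Delta$ and a $K$-regular partition $\{\Theta_1,\dots,\Theta_b\}$ of $\Omega\setminus\Delta$, each into at most $p$ parts. Put $\Omega_i:=\Delta_i\cup\Theta_i$ (reading $\Delta_i=\emptyset$ for $i>a$, $\Theta_i=\emptyset$ for $i>b$). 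Since $\Delta$ is $G$-invariant, any $g$ stabilising every $\Omega_i$ stabilises every $\Delta_i=\Omega_i\cap\Delta$ and every $\Theta_i=\Omega_i\setminus\Delta$; the first forces $g\in K$ and the second then forces $g=1$. So $\{\Omega_i\}$ is a $G$-regular partition into at most $p$ parts.

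So assume $G$ is transitive. If $G$ is primitive then, being solvable, it is of affine type, so $\Omega$ may be identified with a vector space $W$ with $W\le G\le AGL(W)$; as $|W|$ is coprime to $p$ (because $W\le G$), Corollary~\ref{kov:affin_perm} supplies a $G$-regular partition of $W$ into at most $p$ parts. (Here one uses $p\ge 3$, which is all that is needed in the application.) If $G$ is imprimitive, fix a block system $\mathcal B=\{B_1,\dots,B_m\}$ on which $\bar G:=G^{\mathcal B}$ acts primitively; then $1<m<|\Omega|$, $|B_1|=|\Omega|/m<|\Omega|$, $\bar G$ is a primitive solvable $p'$-group, and $D:=G_{\{B_1\}}$ acts on $B_1$ through a solvable $p'$-group $D^{B_1}$. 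Let $N:=\ker(G\to\bar G)$. By Corollary~\ref{kov:affin_perm} there is a $\bar G$-regular partition $\{\bar\Omega_1,\dots,\bar\Omega_k\}$ of $\mathcal B$ with $k\le p$, and by induction a $D^{B_1}$-regular partition $\{\Delta_1,\dots,\Delta_r\}$ of $B_1$ with $r\le p$, both of which (tracing Corollary~\ref{kov:affin_perm} through the inductive step) may be taken with a part of unique size and, off a short list of exceptional sizes, a large part. Pulling the first back, $\Omega_i':=\bigcup_{B\in\bar\Omega_i}B$ gives a partition of $\Omega$ with $\bigcap_i G(\Omega_i')=N$. Transporting the second around the blocks — choose $t_j\in G$ with $t_j(B_1)=B_j$ and put $\Omega_i^{\flat}:=\bigcup_{j=1}^m t_j(\Delta_i)$ — one checks that any $g$ stabilising every $\Omega_i^{\flat}$ must act on each block $B_j$ through the fixed maps $t_j$, hence is determined by the block permutation it induces; thus $R:=\bigcap_i G(\Omega_i^{\flat})$ embeds into $\bar G$, and in particular $R\cap N=1$.

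Consequently the common refinement of $\{\Omega_i'\}$ and $\{\Omega_i^{\flat}\}$ is $G$-regular; the only defect is that it may have as many as $p^2$ parts. The main obstacle — and, beyond Corollary~\ref{kov:affin_perm}, the real content of the theorem — is to amalgamate these two partitions into one with at most $p$ parts. This is precisely where the ``unique size'' and ``large part'' features are used: the two large parts overlap in almost all of $\Omega$, the remaining intersection-pieces are few and of controlled, for the most part distinct, sizes, and using the uniquely-sized parts as anchors one fuses the small pieces into a bounded family of parts without creating any new $G$-symmetry among them; the finitely many exceptional values of $m$ and $|B_1|$ (corresponding to $|W|\in\{2,3,4,9,16\}$ in Corollary~\ref{kov:affin_perm}) are then settled by hand. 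I expect this amalgamation, together with the case analysis it generates, to be the bulk of the work.
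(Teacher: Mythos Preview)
Your overall plan---induct on $|\Omega|$, reduce to the transitive case, handle the primitive case via Corollary~\ref{kov:affin_perm}---matches the paper, and your intransitive reduction is fine (the paper simply omits it). The divergence is in the imprimitive step, and it is exactly the step you leave as unfinished ``bulk of the work''.

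The paper chooses the \emph{opposite} block system from yours: it takes a \emph{minimal} block system $\{\Delta_1,\ldots,\Delta_k\}$, so that the action of $H_i=G(\Delta_i)$ on each $\Delta_i$ is \emph{primitive}. Corollary~\ref{kov:affin_perm} then applies directly to every block and delivers the unique-size property there, with no appeal to an inductive hypothesis. Induction is used only on the quotient action on $\{\Delta_1,\ldots,\Delta_k\}$, producing a colouring $(a_1,\ldots,a_k)\in\FF p^k$ fixed only by the identity of $G/N$.

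With that setup the amalgamation is a one-line trick, not a case analysis. If the block partition is trivial (Case~1 of Corollary~\ref{kov:affin_perm}, so $|\Delta_i|\le p-1$), fix $A\subset\FF p$ with $|A|=|\Delta_i|$ and colour $\Delta_i$ bijectively by the shifted set $A+a_i$; since $|A|<p$, this set determines $a_i$. Otherwise each block has a part $X_i$ of unique size; colour the parts of $\Delta_i$ by elements of $\FF p$ subject only to $X_i\mapsto a_i$. In either case the colour multiset on a block recovers $a_i$, so any $g$ preserving the global colouring fixes $(a_1,\ldots,a_k)$, hence lies in $N$, and then block-wise regularity gives $g=1$.

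Two concrete problems with your version as written: (i) because you take a \emph{maximal} block system and apply induction inside $B_1$, the partition of $B_1$ comes only from the inductive hypothesis and carries no unique-size guarantee---your parenthetical ``tracing Corollary~\ref{kov:affin_perm} through the inductive step'' would require proving a strictly stronger statement by induction; (ii) the ``large part'' clause of Corollary~\ref{kov:affin_perm} plays no role in this theorem at all (it is used only later, in the primitive linear-group arguments of Section~\ref{2bas:prim}), so your picture of the amalgamation as overlapping large parts plus exceptional bookkeeping is aimed at the wrong target.
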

Before proving this, first we give an alternative form of this
statement, which will be easier to handle. Besides that, from this
form it should be clearer what is the connection between finding
$G$-regular partition for a permutation groups and finding 
a two-element base for a linear group.
If $\Omega=\{1,2,\ldots,n\}$, then we have a natural inclusion
$\textup{Sym}(\Omega)\rightarrow GL(n,p)$. 
Hence $\textup{Sym}(\Omega)$ acts naturally on $\FF p^n$. 
If we have a partition of $\Omega$ into at most $p$ parts, then 
we can color the elements of the partitions by the elements of $\FF
p$, that is, there is an $f:\Omega\rightarrow\FF p$ such that
$x,y\in\Omega$ are in the same part of the partition if and only if
$f(x)=f(y)$. 
Thus, Theorem \ref{thm:perm} is clearly equivalent to the following theorem.
\begin{thm}\label{thm:perm_var}
If $G$ is a solvable permutation group of degree $n$, and $p$ does not
 divide the order of $G$, then there is an $(a_1,a_2,\ldots,a_n)\in\FF p^n$
 vector, such that only the identity element of
 $G$ fixes this vector.
\end{thm}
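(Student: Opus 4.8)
The plan is to argue by induction on $|G|$, reducing the general solvable permutation group to the primitive affine case already handled by Theorem~\ref{thm:affin_perm} and Corollary~\ref{kov:affin_perm}. First I would dispose of the trivial case $G=1$, and then pick a minimal normal subgroup $N\nor G$; since $G$ is solvable, $N$ is an elementary abelian $r$-group for some prime $r\neq p$ (as $r\mid|G|$). The natural move is to look at the $N$-orbit decomposition of $\Omega=\{1,2,\ldots,n\}$, say $\Omega=\Lambda_1\cup\cdots\cup\Lambda_m$, which $G$ permutes among themselves; equivalently $G$ acts on the set $\{\Lambda_1,\ldots,\Lambda_m\}$ through a quotient $\bar G=G/K$ where $K$ is the kernel of that action. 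If $K=1$ then $G$ acts faithfully on a smaller set (each $\Lambda_i$ has size $\geq 2$, so $m<n$), and induction on the degree — or rather on $|G|$, which is unchanged, so one should instead induct on $n$ here — finishes: lift a good vector for $\bar G$ on $\{\Lambda_1,\ldots,\Lambda_m\}$ to $\FF p^n$ by making $f$ constant on each block. So assume $K\neq 1$.

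The interesting case is therefore that $N$ acts transitively on $\Omega$ (take $N$ inside $K$; more precisely, reduce to the case where $\Omega$ is a single $N$-orbit by treating each $G$-orbit separately and, within it, using that $K$ acts on each block-orbit). When $N$ is transitive on $\Omega$, identify $\Omega$ with $N$ acting on itself by translation, so $\Omega\cong W$ where $W$ is the $\FF r$-vector space underlying $N$, and $G\leq W\rtimes\mathrm{Aut}(W)=AGL(W)$ with $W=N$ the translation part. This is exactly the setup of Corollary~\ref{kov:affin_perm}: since $p\nmid|G|$ and $p\geq 3$, there is a $G$-regular partition of $W$ into at most $p$ parts. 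Colouring the parts by distinct elements of $\FF p$ yields the desired vector in $\FF p^n$, no element of $G$ fixing it. The general $\Omega$ is a disjoint union of $G$-orbits; handle each orbit by the above to get a colouring on which the pointwise stabiliser of that orbit's colours is trivial, hence the intersection over all orbits is trivial — but one must make sure that the same palette of $\leq p$ colours suffices simultaneously, which it does since within each orbit only $\leq p$ colours are used and the colours of different orbits may be chosen independently.

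The main obstacle is bookkeeping the reduction cleanly: one genuinely needs a two-layer induction — peeling off block systems (reducing $n$) until the socle component acts transitively, at which point Corollary~\ref{kov:affin_perm} applies directly. The subtle point is that after passing to the action on blocks $\{\Lambda_1,\ldots,\Lambda_m\}$ the acting group is a quotient $\bar G$, not a subgroup, so "$p\nmid|\bar G|$" still holds (quotients of $p'$-groups are $p'$-groups) and the inductive hypothesis applies to $\bar G$; one then pulls the vector back to $\Omega$ constantly on blocks, and the pointwise stabiliser of the pulled-back vector is the preimage in $G$ of the stabiliser in $\bar G$, i.e. it is contained in $K$. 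Then one must still kill $K$: here $K$ acts on $\Omega$ preserving each block $\Lambda_i$, and one recurses on the $K$-action restricted block-wise — or, more efficiently, chooses the block system to come from a minimal normal $N$ with $N\leq K$ transitive on each of its orbits, so that the residual group to handle on each block is governed by the affine construction. Getting these two reductions to interlock without circularity (induct on $|G|$ for the "$K\ne1$" branch, on $n$ for the "$K=1$" branch, with the affine corollary as the base case of the first) is the only real care required; everything else is routine.
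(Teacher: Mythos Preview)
Your reduction outline is reasonable, but there is a genuine gap precisely at the step you dismiss as ``routine'': merging the block-level colouring with the within-block colourings into a \emph{single} $\FF_p$-valued function on $\Omega$. Concretely, suppose $G$ is transitive with block system $\Lambda_1,\ldots,\Lambda_m$, and by induction you have $(a_1,\ldots,a_m)\in\FF_p^m$ fixed only by the identity of $G/K$. You then propose to ``recurse on the $K$-action block-wise'' or to apply the affine construction on each block, producing colourings $f_i:\Lambda_i\to\FF_p$. But you never say how to fuse the $a_i$ and the $f_i$ into one $f:\Omega\to\FF_p$ so that (i) the value $a_i$ can be read off from $f|_{\Lambda_i}$, and (ii) $f|_{\Lambda_i}$ still distinguishes all elements of $K|_{\Lambda_i}$. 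The naive attempt $f(x)=f_i(x)+a_i$ fails whenever the multiset of values of $f_i$ on a block is invariant under some nontrivial translation of $\FF_p$ (for instance, if it hits every residue equally often): then the shift $a_i$ is invisible, and a $g\in G\setminus K$ permuting the blocks could fix $f$.

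This fusion is the actual content of the paper's argument, and it is not automatic. The paper takes the blocks $\Delta_i$ to be \emph{minimal}, so the action on each is primitive and Corollary~\ref{kov:affin_perm} applies; then it exploits part~1 of that corollary. Either $|\Delta_i|\leq p-1$, in which case one colours $\Delta_i$ bijectively with a fixed $(p-1)$-subset $A\subset\FF_p$ shifted by $a_i$ (and $|A|<p$ forces $A$ to have trivial translation-stabiliser, so $a_i$ is recoverable from the colour set on $\Delta_i$); or the regular partition of $\Delta_i$ has a part of \emph{unique size}, and one colours that distinguished part by $a_i$ and the remaining parts by fixed values, so that $a_i$ is the unique colour-class of that size on $\Delta_i$. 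Either way the colour distribution on $\Delta_i$ determines $a_i$, which is exactly what forces any $g$ fixing $f$ into $K$. Your plan needs this mechanism (or a substitute for it), and none is supplied.
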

\begin{proof}
Although we do not deal with the case $p=2$, we note that
this follows from a Theorem of D.\ Gluck\cite{gluck}. A direct short
proof is given by H.\ Matsuyama \cite{matsuyama}. Thus, let in the
following $p\geq 3$.\par
If $G$ is primitive permutation group, then Corollary 
\ref{kov:affin_perm} guarantees the existence of such a vector (or partition).
In the following let $G$ be a transitive, but not primitive group.
Then there are blocks $\Delta_i,\ 1\leq i\leq k$, such that
$1<|\Delta_i|<|\Omega|$, 
$\Omega=\Delta_1\cup\Delta_2\cup\ldots\cup\Delta_k$ is a partition, 
and $G$ permutes the $\Delta_i$ sets transitively.
We can assume that $|\Delta_i|$ is as small as possible.
Let $H_i=G(\Delta_i)$ and $N=\cap_{i=1}^k H_i$. Now, $G/N$ 
transitively on the set
$\widetilde{\Omega}=\{\Delta_1,\Delta_2,\ldots,\Delta_k\}$. 
Using induction to $|\widetilde{\Omega}|$ 
we get a vector $(a_1,a_2,\ldots,a_k)\in\FF p^k$
such that only the identity element of $G/N$ fixes this vector.\\
On the other hand, $H_i/C_{H_i}(\Delta_i)$ acts primitively on 
$\Delta_i$ for all $1\leq i\leq k$, and these groups are all conjugate
in $G$; in particular, they are permutation isomorphic.
Thus, for each $i$ we can find a $H_i/C_{H_i}(\Delta_i)$-regular
partition of $\Delta_i$ by Corollary
\ref{kov:affin_perm} and these partitions are essentialy the same
for $1\leq i\leq k$.\\
If the first case holds in part \textsl{1} of the above Corollary, 
then $|\Delta_i|\leq p-1$. In this case let us choose an
$A\subset \FF p$ subset such that $|A|=|\Delta_i|$, and 
let $f_i:\Delta_i\rightarrow A+a_i=\{a+a_i\;|\;a\in A\}$ be
a bijection for every $1\leq i\leq k$.\\
If the second case holds in part \textsl{1} of the above Corollary, 
then for each $i$ 
let us choose $X_i\sbs \Delta_i$ part of the partition of $\Delta_i$,
such that it is of unique size. If there would be more than one such part,
then we only need to pay attention that the size of each $X_i$ must
be the same. Now, let the finction $f_i:\Delta_i\rightarrow\FF p$ 
be defined as a coloring of the partition of $\Delta_i$ such that 
$f_i(X_i)=a_i$.\\
Finally, let the function $f:\Omega\rightarrow \FF p$ be defined as
\[
f(x)=f_i(x), \textrm{\quad if\quad }x\in\Delta_i.
\]
The essence of this construction is that the distribution of $f_i$
determines $a_i$. 
Hence, if
$g\in G$ fixes the vector $(f(1),f(2),\ldots,f(n))\in\FF p^n$, then
$gN$ fixes the vector $(a_1,a_2,\ldots,a_k)\in\FF p^k$, so
$g\in N$ and $g(\Delta_i)=\Delta_i$ for each $1\leq i\leq k$. 
Finally, from the construction of the $f_i$-s we get
$g\in\cap_{i=1}^k C_{H_i}(\Delta_i)=1$.
\end{proof}
\noindent\textbf{Remark}

\medskip\noindent
It was proven by Á.\ Seress \cite[Theorem 1.2.]{seress} that
for any $G\leq \textrm{Sym}(\Omega)$ solvable permutation group
there always exists a $G$-regular partition of $\Omega$ into at most
five parts.
\bigskip\\
Our last result concerning permutation groups is showing regular partitions
to groups of affine type with ``mixed characteristic''.
This will play a role in the discussion of primitive linear groups.
\begin{thm}\label{thm:kevert}
For each $1\leq i\leq k$ let $W_i$ be a finite vector space over the 
$p_i$-element field, where $p_1<p_2<\ldots<p_k$ and $k\geq 2$. 
Furthermore, let $\oplus_{i=1}^k W_i\leq G\leq
AGL(W_1)\times AGL(W_2)\times\cdots \times AGL(W_k)$ acting 
on $W=W_1\oplus W_2\oplus\cdots\oplus W_k$ in the natural way.
Then there exists a $G$-regular 
$W=\Omega_1\cup\Omega_2\cup\Omega_3$ partitition such that $\Omega_1=\{\ud
0\}$ and $|\Omega_2|<\frac{1}{4}|W|$.
\end{thm}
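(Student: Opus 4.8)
\emph{A reduction to a linear problem.} Since $\bigoplus_i W_i\leq G$, the group $G$ contains all translations, so the set-wise stabiliser $G(\Omega_1)=G(\{\ud 0\})$ is exactly $\{g\in G:g\ud 0=\ud 0\}=G\cap(GL(W_1)\times\cdots\times GL(W_k))=:G_0$, whose elements $g=(g_1,\ldots,g_k)$ act diagonally on $W=W_1\oplus\cdots\oplus W_k$. As $\Omega_3=W\setminus(\{\ud 0\}\cup\Omega_2)$, the partition $\{\Omega_1,\Omega_2,\Omega_3\}$ is $G$-regular iff only the identity of $G_0$ fixes $\Omega_2$ set-wise. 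So it suffices to find $\Omega_2\sbs W\setminus\{\ud 0\}$ with $|\Omega_2|<\tfrac{1}{4}|W|$ and trivial set-wise stabiliser in $G_0$; since enlarging $G$ makes this only harder, we may take $G_0=GL(W_1)\times\cdots\times GL(W_k)$.

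\emph{Assembling $\Omega_2$ from coordinate pieces.} A diagonal automorphism preserves the support of a vector, hence preserves, for each $S\sbs\{1,\ldots,k\}$, the set of vectors of support exactly $S$. By Theorem~\ref{thm:affin_perm}, each $W_i$ carries an $AGL(W_i)$-regular partition with $\{\ud 0\}$ as one part and at most $m_i$ further parts $P_i^{(1)},\ldots,P_i^{(m_i)}$ — where $m_i=0$ for $W_i=\FF 2$, $m_i=1$ if $\dim W_i=1$ or $p_i\geq 5$, $m_i=2$ for $W_i=\FF 2^2$ or ($p_i=3$, $\dim W_i\geq 2$), and $m_i=3$ for $W_i=\FF 2^3$ or ($p_i=2$, $\dim W_i\geq 4$) — and, an element fixing $\{\ud 0\}$ being linear, the only $g_i\in GL(W_i)$ fixing all of $P_i^{(1)},\ldots,P_i^{(m_i)}$ set-wise is $1$. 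The plan is to put each such part into $\Omega_2$ inside its own ``support-and-label'' class: $P_i^{(1)}$ as $\{(\ud 0,\ldots,\ud p,\ldots,\ud 0):\ud p\in P_i^{(1)}\}$ (support $\{i\}$), and each further $P_i^{(r)}$ as $\{(\ldots,\ud p,\ldots,\ud w_{i,r},\ldots):\ud p\in P_i^{(r)}\}$ for a well-chosen coordinate $j=j(i,r)\neq i$ and a fixed nonzero $\ud w_{i,r}\in W_j$ (support $\{i,j\}$). If these classes are pairwise distinguishable, a stabilising $g\in G_0$ preserves each of them, so each $g_i$ fixes every $P_i^{(r)}$, so $g_i=1$ for all $i$, and $\Omega_2$ is as wanted.

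\emph{The genuine obstacle, and why $k\geq 2$ saves the day.} The point is that for some summands $GL(W_i)$ has \emph{no single} subset of $W_i\setminus\{\ud 0\}$ of trivial set-wise stabiliser — e.g.\ $W_i=\FF 2^2$ (here $GL(W_i)\cong S_3$ on the three nonzero vectors), $W_i=\FF 2^3$ (here $|GL(W_i)|=168>2^7$), or $W_i=\FF 3^2$ — so for them $m_i\geq 2$ and the extra parts really do need another coordinate to live in. Since $p_1<\cdots<p_k$, at most one summand is a $2$-power, and since $k\geq 2$ there is always a further summand available as a label space. One orders the summands so that a coordinate $j$ is used as a label only after $g_j$ has already been forced to $1$ by earlier pieces (any $W_j$ with $\dim W_j=1$ or $p_j\geq 5$ is pinned by its own support-$\{j\}$ piece alone, hence always safe); the labels on such a coordinate are then invariant, which makes the routed parts invariant. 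When two routed families would otherwise share a support class, one further takes their parts of unequal size (or their labels in different orbits under the relevant point-stabiliser) so that no label-permuting symmetry of $GL(W_j)$ can be matched by a part-permuting symmetry of $GL(W_i)$. Carrying this out is an elementary but fiddly finite analysis of the multiset of summand types; the only structural fact it rests on is that $\sum_i m_i$ is bounded linearly in $\sum_i\dim W_i$ while, once $k\geq 2$, the supply of support-and-label classes grows at least that fast.

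\emph{The size bound.} Finally $|\Omega_2|=\sum_i\sum_{r\leq m_i}|P_i^{(r)}|$ is at most a fixed affine function of $\sum_i\dim W_i$, whereas $|W|=\prod_i p_i^{\dim W_i}\geq 6$ grows geometrically, so $|\Omega_2|<\tfrac{1}{4}|W|$ holds outside a short explicit list of small $W$, each settled by hand. For instance, for $W=\FF 2^3\oplus\FF 3$ (with $\ud e_1,\ud e_2,\ud e_3$ a basis of the first summand and $\ud f$ spanning the second) one may take $\Omega_2=\{(\ud e_1,\ud 0),(\ud e_2,\ud f),(\ud e_3,2\ud f),(\ud 0,\ud f)\}$: the vector $(\ud 0,\ud f)$ forces $g_2=1$, after which the second coordinates $\ud f,2\ud f$ are invariant labels separating $(\ud e_2,\ud f)$ from $(\ud e_3,2\ud f)$, and the first three vectors then force $g_1\ud e_1=\ud e_1,\ g_1\ud e_2=\ud e_2,\ g_1\ud e_3=\ud e_3$. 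The one real difficulty is the bookkeeping of the previous paragraph: choosing the routing so that every support-and-label class is distinguishable and no spurious symmetry of $\prod_i GL(W_i)$ survives.
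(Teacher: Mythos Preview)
Your reduction to $G_0=\prod_i GL(W_i)$ and the plan of building $\Omega_2$ from per-summand pieces separated by their support sets is exactly the paper's approach. Where you diverge is in the assembly, and there the argument has a genuine gap: you propose routing the extra parts $P_i^{(r)}$ ($r\geq 2$) into support classes $\{i,j\}$ with labels, ordered so that coordinate $j$ is already pinned before it is used as a label space---but when $k=2$ and \emph{both} summands need more than one part (for instance $W_1=\FF 2^3$, $W_2=\FF 3^2$), there is no safe starting coordinate, and you defer the resolution to an ``elementary but fiddly finite analysis'' that is never performed. That analysis \emph{is} the content of the proof; without it you have a plan, not an argument. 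The size estimate is likewise asserted rather than verified.

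The paper's execution avoids all of this by a simplification you are missing. Because the $p_i$ are distinct primes, at most $W_1$ has characteristic $2$ and at most $W_2$ has characteristic $3$; every summand with $p_i\geq 5$ (or $|W_i|\leq 3$) is already pinned by a \emph{single} subset $\Omega_i^*\sbs W_i$ via Cases~1--3 of Theorem~\ref{thm:affin_perm}. Thus at most three extra one-point constraints are ever needed in total---$\ud e_{1,1},\ud e_{1,2}$ for $W_1$ and $\ud e_{2,1}$ for $W_2$---and the paper encodes them all simultaneously by adjoining the three vectors $\ud e_{1,1}+\ud e_{2,1}$, $\ud e_{1,1}+2\ud e_{2,1}$, $\ud e_{1,2}+\ud e_{2,1}$ of support $\{1,2\}$ (or just the first one if $p_1\neq 2$ or $n_1\leq 2$). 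No bootstrap ordering is required: among these three vectors the $W_1$-projections $\ud e_{1,1},\ud e_{1,1},\ud e_{1,2}$ and the $W_2$-projections $\ud e_{2,1},2\ud e_{2,1},\ud e_{2,1}$ each have a multiplicity-two and a multiplicity-one value, so any product action $(g_1,g_2)$ permuting the triple must fix $\ud e_{1,1},\ud e_{1,2},\ud e_{2,1}$ at once. The size bound is then the single inequality $|\Omega_2|\leq 1+2\sum_i n_i<\tfrac14\cdot 3\cdot 2^{\sum n_i-1}\leq\tfrac14|W|$, valid whenever $\sum_i n_i\geq 5$, with the few smaller cases handled by ad hoc choices of $\Omega_2$ in one line each. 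Your routing scheme could be pushed through, but it obscures the point: there are never more than three extra pieces, and one mixed-support triple absorbs them all.
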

\begin{proof}
For each $1\leq i\leq k$ let $\ud e_{i,1},\ud e_{i,2},\ldots, \ud
e_{i,n_i}$ be a basis of $W_i$, where $n_i=\dim W_i$. 
To show a suitable $\Omega_2$ we use the cases 1-6 of Theorem 
\ref{thm:affin_perm}. We saw that there are $\Omega_i^*\sbs W_i$
subsets such that $G(\ud 0)\cap G(\Omega_i^*)=1$ for $p_i\geq 5$, or
$|W_i|\leq 3$,
$G(\ud 0)\cap G(\Omega_i^*)\cap G(\ud e_{i,1})=1$ for $p_i=3$ or $|W_i|=4$
and $G(\ud 0)\cap G(\Omega_i^*)\cap G(\ud e_{i,1})\cap G(\ud e_{i,2})=1$ for
$p_i=2, n_i\geq 3$. Now, let $\Omega_2$ be defined as
\[
\begin{array}{ll}
\{\ud e_{1,1}+\ud e_{2,1},\ud e_{1,1}+2\ud e_{2,1},
\ud e_{1,2}+\ud e_{2,1}\}\cup
\Omega_1^*\cup \Omega_2^*\cup\ldots\cup\Omega_k^*,&\textrm{if}\ 
p_1=2, n_1\geq 3;\\
\{\ud e_{1,1}+\ud e_{2,1}\}\cup
\Omega_1^*\cup \Omega_2^*\cup\ldots\cup\Omega_k^*,&\textrm{otherwise.}
\end{array}
\]
Now, $G(\ud 0)$ is a subgroup of the automorphism group of $\oplus
W_i$, so it fixes each $W_i$. Thus, if $g\in G(\ud 0)\cap
G(\Omega_2)$, then $g$ fixes each $W_i\cap \Omega_2=\Omega^*_i$, and
it permutes the three (or one) exceptional elements. Using that 
$g(e_{1,1}),g(e_{1,2})\in W_1$, $g(e_{2,1})\in W_2$, we get $g$ fixes
also these elements. Hence $g$ acts trivially on every $W_i$, so
$g=1$, and we found a $G$-regular partition. \\
Let $l=n_1+\ldots+n_k$. Then $|W|\geq 2^{l-1}3$, since $k\geq 2$. 
We saw that $|\Omega_i^*|\leq 2n_i$, and $|\Omega_1^*|\leq 2n_1-3$, if 
$p_1=2,\ n_1\geq 3$. It follows that $|\Omega_2|\leq
1+2l<\frac{1}{4}2^{l-1}3\leq \frac{1}{4}|W|$ holds, unless $l\leq 4$. 
Now, assume that $l\leq 4$.
If each $n_i\leq 2$, then let $\Omega_2=\{\sum_i e_{i,1}\}\cup
\{e_{j,2}\,|\, n_j=2\}$, which is clearly $G(\ud 0)$-regular, and for which
$|\Omega_2|=l-1<\frac{1}{4}2^{l-1}3\leq 1/4|W|$ holds. 
In case of $n_1=3,\ p_1=2,\ n_2=1$, we have
$|\Omega_1^*|=|\Omega_2^*|=1$, so $|\Omega_2|=3+1+1<\frac{1}{4}2^3
3\leq\frac{1}{4}|W|$. Finally, if $|W|=p^3q$ for some $p\neq 2,q$
primes, then $|\Omega_2|\leq 1+6\leq \frac{1}{4}3^3 2\leq\frac{1}{4}|W|$.
\end{proof}
\section{Primitive linear groups}\label{2bas:prim}
In the following let $V\simeq\FF p^n$ be a finite vector space and let 
$G\leq GL(V)\simeq GL(n,p)$ be a solvable linear group such that
$(|G|,p)=1$. In this section we assume that $G$ is a primitive linear
group, that is, there does not exists a
\[
V=V_1\oplus V_2\oplus\ldots\oplus V_t
\]
proper decomposition of $V$, such that $G$ permutes the $V_i$
subspaces. (We deal with imprimitive linear groups in section
\ref{2bas:imprim}.)
In order to find vectors $x,y\in V$ such that 
$C_G(x)\cap C_G(y)=1$, we can clearly assume that $G$ is maximal (with
respect to inclusion) among the
solvable $p'$-subgroups of $GL(V)$.
The main idea of our construction is the following: Using that the
Fitting subgroup of $G$ (denoted by $F$) has a very special structure,
we show the existence of a basis of $V$ such that every element of $F$
is ``almost'' monomial in this basis. 
Next, we choose $x$ in such a way that 
$C_G(x)$ is also ``almost'' monomial subgroup in this basis. 
Now, the permutation part of $F$ defines a linear space on this
special basis, and the permutation part of $C_G(x)$ acts on
this space as a linear group. Hence we can use the constructions given in
Theorem \ref{thm:affin_perm} and in Theorem \ref{thm:kevert}
to find a suitable $y$. 
\subsection{The structure of the Fitting subgroup}\label{subsection:fitting}
If $G\leq GL(V)$ is a maximal solvable $p'$-subgroup, then it is a
 $p'$-Hall subgroup of some $H\leq GL(V)$ maximal solvable subgroup.
Some relevant properties of such groups can be found in
\cite[Proposition 2.1]{p3}, \cite[Lemma 2.2]{seress} 
and in \cite[\S 19--20]{sup}. We collect them in the following theorem.
\begin{thm}\label{thm:maxprim}
Let $H\leq GL(n,p)$ be a maximal solvable primitive group. 
Then $H$ contains a unique maximal abelian normal subgroup, denoted by 
$A$. Furthermore, let $C=C_H(A)$ and $F=\textrm{Fit}(C)$, the Fitting
subgroup of $C$. Now, $A\leq F\leq C\leq H$ are all normal subgroups
of $H$, which have the following properties.
\begin{enumerate}
\item $A$ is cyclic and $|A|=p^a-1$ for some $a$.
\item The linear span of $A$ is isomorphic to the field $\FF {p^a}$.
\item The action of $H/C$ on $A$ gives us an inclusion 
$H/C\hookrightarrow\FF{p^a})$. 
\item $F=AP_1P_2\ldots P_k$, where $P_i$ is an
extraspecial $p_i$-group of order $p_i^{2e_i+1}$ for each $i$.
Furthermore, $Z(P_i)=A\cap P_i$, and $A$ contains all the $p_i$-th
roots of unity. If $p_i>2$, then the exponent of $P_i$ is $p_i$. 
\item Let $e=\prod p_i^{e_i}$. Then $n=ea$. 
\item $C$ is included in $GL(e,p^a)$.
\item $F\leq GL(e,p^a)$ gives an irreducible representation of $F$.
\end{enumerate}
\end{thm}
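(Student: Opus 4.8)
The plan is to recover this from Clifford theory together with the maximality of $H$, following the classical analysis of solvable primitive linear groups (Suprunenko, \S 19--20). First I would use primitivity to show that every normal subgroup $N\nor H$ acts homogeneously on $V$: by Clifford's theorem $V|_N$ is a direct sum of homogeneous components which $H$ permutes transitively, and more than one component would constitute a block system, contradicting primitivity. Now let $A$ be a maximal abelian normal subgroup of $H$. Applying the previous observation to $A$, we get $V|_A = e\cdot W$ with $W$ an irreducible $\FF p[A]$-module; since $A$ is abelian, the image of $\FF p[A]$ in $\textrm{End}(W)$ is a finite field $D$, $W$ is one-dimensional over $D$, and the $\FF p$-span of $A$ inside $\textrm{End}(V)$ is a diagonal copy of $D=:\FF{p^a}$. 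Since $H$ normalizes this span, it acts on it by field automorphisms, hence it normalizes the abelian group $\FF{p^a}^\times\leq GL(n,p)$; this group is normal in $H$ and contains $A$, so it equals $A$ by maximality. This gives (1) and (2), and identifies $V$ with an $\FF{p^a}$-vector space of some dimension $e$, so that $n=ea$ once $e$ is computed.

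Next I put $C:=C_H(A)$, which is normal in $H$. Since centralizing the scalar field $A=\FF{p^a}^\times$ is the same as acting $\FF{p^a}$-linearly, $C=H\cap GL(e,p^a)$, which is (6), and the conjugation action of $H$ on $A$ factors through $H/C$ and lands in $\textrm{Aut}(\FF{p^a})$, cyclic of order $a$, giving (3). Moreover $Z(C)$ is an abelian subgroup which is characteristic in $C\nor H$, hence normal in $H$, and contains $A$; therefore $A=Z(C)$, and likewise $A=Z(F)$ for $F:=\textrm{Fit}(C)$. Also, by the classical fact $C_C(\textrm{Fit}(C))=Z(\textrm{Fit}(C))$, we have $C_C(F)=A$, so $C/A$ embeds into $\textrm{Aut}(F)$ acting trivially on $A=Z(F)$.

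The heart is the structure of $F$. Since we are in characteristic $p$ and a nontrivial $p$-group acting homogeneously on an $\FF p$-space must act trivially (its only irreducible module is the trivial one), faithfulness forces $O_p(H)=1$, so $F$ is a $p'$-group and $F=\prod_q O_q(F)$. Each $P:=O_q(F)$ is a normal $q$-subgroup of $H$, hence acts homogeneously, hence (over $\bar{\FF p}$) with a faithful irreducible constituent, which forces $Z(P)$ to be cyclic. Then, using that $H$ is \emph{maximal} solvable together with $C_C(F)=A=Z(F)$, one shows that $[P,P]$ has order at most $q$ and $P/\Omega_1(Z(P))$ is elementary abelian; that is, $P$ is a central product of the $q$-part of $A$ with an extraspecial $p_i$-group $P_i$ of order $p_i^{2e_i+1}$ satisfying $Z(P_i)=A\cap P_i$ and, for $p_i$ odd, of exponent $p_i$, and the commutator map induces a nondegenerate alternating form on $P_i/Z(P_i)$. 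Assembling these over the primes $q$ yields $F=AP_1\cdots P_k$ as a central product, which is (4). Finally $F$ contains the full scalar group $A=\FF{p^a}^\times$ and each $P_i$ contributes its unique faithful irreducible representation of dimension $p_i^{e_i}$ over $\FF{p^a}$ (which contains the needed roots of unity), so $F$ acts $\FF{p^a}$-irreducibly and $e=\dim_{\FF{p^a}}V=\prod p_i^{e_i}$, giving (5) and (7).

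The main obstacle is precisely the middle assertion in (4): showing that each $O_q(F)$ has the extraspecial-by-cyclic shape with a nondegenerate symplectic commutator form. This is where maximality of $H$ (and not merely maximality of $A$) is genuinely needed — to exclude a larger nilpotent normal subgroup and to pin down $[P,P]$ — and it is the technical content packaged in \cite[\S19--20]{sup}, \cite[Proposition 2.1]{p3} and \cite[Lemma 2.2]{seress}; the remaining items are then routine bookkeeping about central products and about dimensions of irreducible representations of extraspecial groups.
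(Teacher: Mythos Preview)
The paper does not actually prove this theorem: it is stated as a summary of known structural facts about maximal solvable primitive linear groups, with the proof delegated entirely to the references \cite[\S19--20]{sup}, \cite[Proposition 2.1]{p3}, and \cite[Lemma 2.2]{seress}. Your outline is a faithful sketch of the classical argument contained in those sources --- Clifford homogeneity of normal subgroups under primitivity, identification of $A$ with the scalars of an intermediate field via maximality, the embedding $H/C\hookrightarrow\textrm{Gal}(\FF{p^a}/\FF p)$, and the extraspecial shape of the Sylow subgroups of $F$ --- so there is no discrepancy in approach, only in level of detail.

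One small point worth tightening: you derive $A=\FF{p^a}^\times$ from the maximality of $A$ among abelian normal subgroups, but the theorem also asserts \emph{uniqueness} of the maximal abelian normal subgroup, which your sketch does not address explicitly. This follows once you have $A=Z(C)$: any abelian normal $B\nor H$ acts homogeneously, hence its $\FF p$-span is a field whose unit group is abelian normal in $H$ and contains $B$; arguing symmetrically one finds that both maximal abelian normal subgroups coincide with the scalar group of the largest subfield of $\textrm{End}(V)$ normalized by $H$. This is routine but deserves a sentence.
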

Now, if $G\leq H$ is a $p'$-Hall subgroup of $H$, then we claim that
 $A\leq F\leq G$. Indeed, $A$ and $F$ both are normal $p'$-subgroups o
 $H$, so they are contained in a $p'$-Hall subgroup of $H$. Since the 
$p'$-Hall subgroups are all conjugate, they are contained in $G$, too.
Hence we can use the above theorem to $G$.
By the next lemma we can assume that $C_G(A)=G$.
\begin{lem}
Let $x,y\in V$ such that $C_C(x)\cap C_C(y)=1$. 
Then for some $\gamma\in A\cup \{0\}=\FF {p^a}$  we have
$C_G(x)\cap C_G(y+\gamma x)=1$.
\end{lem}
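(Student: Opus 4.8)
The plan is to exploit the fact that $C = C_H(A)$ is normalized by $H$, hence by $G$, and that $G/C_G(A)$ embeds into the cyclic group $\Gamma = \mathrm{Gal}(\FF{p^a}/\FF p)$ via the action on $A$; in particular $C_G(A) = G \cap C$ and $G/C_G(A)$ acts semilinearly on $V$ viewed as an $\FF{p^a}$-space. The key observation is that the $\FF{p^a}$-line $\FF{p^a}x$ is $C_G(x)$-invariant in a controlled way: if $g \in C_G(x)$, then for any scalar $\gamma \in \FF{p^a}$ the element $g$ sends $y + \gamma x$ to $g(y) + g(\gamma x)$, and since $g$ acts $\sigma_g$-semilinearly for some $\sigma_g \in \Gamma$, we have $g(\gamma x) = \sigma_g(\gamma)\, g(x) = \sigma_g(\gamma)\, x$. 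So $g$ fixes $y + \gamma x$ iff $g(y) = y + (\gamma - \sigma_g(\gamma))x$.

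First I would set $C_0 = C_G(A) = G \cap C$, a normal subgroup of $G$ of index dividing $|\Gamma| = a$, and note that by hypothesis there are $x,y \in V$ with $C_{C_0}(x) \cap C_{C_0}(y) = 1$ (this is exactly the conclusion of the ``$C_G(A) = G$'' reduction applied to $C_0$ in place of $C$; the statement is phrased with $C$ but what is used is that we may assume the centralizer of $A$ is everything). Then I would analyze an arbitrary $g \in C_G(x) \cap C_G(y + \gamma x)$. Such a $g$ satisfies $g(x) = x$ and, by the semilinearity computation above, $g(y) = y + (\gamma - \sigma_g(\gamma))x$. The idea is to choose $\gamma$ so that this forces $\sigma_g = \mathrm{id}$, i.e. $g \in C_0$; once $g \in C_0$ we get $g(y) = y$ as well, so $g \in C_{C_0}(x) \cap C_{C_0}(y) = 1$.

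The heart of the argument — and the main obstacle — is the choice of $\gamma$. For each nontrivial $\sigma \in \Gamma$ the map $\gamma \mapsto \gamma - \sigma(\gamma)$ is $\FF p$-linear on $\FF{p^a}$ with kernel the fixed field of $\sigma$, which is a proper subfield; moreover, given $g$ with associated $\sigma_g = \sigma \neq \mathrm{id}$, the scalar $\mu_g \in \FF{p^a}$ with $g(y) = y + \mu_g x$ (if $g(y)$ even lies in $y + \FF{p^a}x$) is determined by $g$. I would argue that the set of ``bad'' $\gamma$ — those for which some $g \notin C_0$ with $g(x) = x$ also fixes $y + \gamma x$ — is contained in a union, over the finitely many nontrivial $\sigma$ and the relevant $g$, of cosets of proper subfields of $\FF{p^a}$, hence is a proper subset of $\FF{p^a}$; therefore a good $\gamma$ exists. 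Making this counting clean will require being careful that for fixed $g$ the condition $g(y) = y + (\gamma - \sigma_g(\gamma))x$ pins down $\gamma - \sigma_g(\gamma)$ uniquely (so at most one additive coset of $\ker(1 - \sigma_g)$ of bad $\gamma$ per such $g$), and that there are enough elements in $\FF{p^a}$ to avoid all of them — here one uses $|\FF{p^a}| = p^a$ against a bound on the number of bad cosets, which in turn follows from the structure of $C_G(x)$ and the fact that each nontrivial $\sigma$ fixes only $p^{a/r} \le p^{a/2}$ scalars.
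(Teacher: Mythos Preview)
Your framework is exactly the paper's: exploit the $\sigma_g$-semilinearity so that $g\in C_G(x)\cap C_G(y+\gamma x)$ forces $g(y)=y+(\gamma-\sigma_g(\gamma))x$, and then count the ``bad'' $\gamma$'s as cosets of proper subfields. Your observation that any such $g$ lying in $C_0=G\cap C$ automatically fixes $y$ (hence is trivial) is also correct.

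The gap is in the counting. You write ``at most one additive coset \ldots\ per such $g$'' and say the total number of bad cosets ``follows from the structure of $C_G(x)$'', but you never bound the number of relevant $g$'s, and you never invoke the coprimality hypothesis $(|G|,p)=1$. For a fixed nontrivial $\sigma$ there are, a priori, $|C_G(x)\cap C|$ elements $g\in C_G(x)$ with $\sigma_g=\sigma$, and each one with $g(y)-y\in\FF{p^a}x$ could contribute a \emph{different} coset of the fixed field of $\sigma$. A bound of the shape $(\text{number of bad }g)\cdot p^{a/2}<p^a$ does not close without controlling that first factor.

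The missing step, which is the heart of the paper's proof, is an injectivity statement: if $g_1,g_2\in C_G(x)$ both satisfy $g_i(y)=y+\mu_i x$ and $\sigma_{g_1}=\sigma_{g_2}$, then $h:=g_1g_2^{-1}$ lies in $C$, fixes $x$, and is $\FF{p^a}$-linear with $h(y)=y+(\mu_1-\mu_2)x$. Iterating gives $h^n(y)=y+n(\mu_1-\mu_2)x$, so $h^n=1$ forces $p\mid n$ unless $\mu_1=\mu_2$. Since $|G|$ is prime to $p$, one gets $\mu_1=\mu_2$ and then $h\in C_C(x)\cap C_C(y)=1$. Thus at most one bad $g$ occurs for each $\sigma$, and the paper finishes with the estimate $\bigl|\bigcup_g K_g\bigr|\le\sum_{d\mid a,\,d<a}p^d<p^a$. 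Once you insert this coprimality argument, your plan becomes the paper's proof.
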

\begin{proof}
For any $g\in G$ let $\sigma_g\in\textrm{Aut\;}(\FF {p^a})$ denote
the action of $gC$ on $\FF {p^a}$ by part \textit{3} of Theorem 
\ref{thm:maxprim}.
For all $\alpha\in\FF {p^a}$ let the subgroup
$H_\alpha=C_G(x)\cap C_G(y+\alpha x)\leq G$.
Our aim is to prove that $H_\alpha=1$ for some $\alpha\in\FF {p^a}$.\\
Let $g\in H_\alpha$. Thus, $g(x)=x$ and $y+\alpha x=g(y+\alpha x)=g(y)+
\alpha^{\sigma_g}x$. Hence $g(y)=y+(\alpha-\alpha^{\sigma_g})x$.
If $g\in \left<\cup H_\alpha\right>$, then $g$ is the product of elements
from several $H_\alpha$'-s. It follows that $g(y)=y+\delta x$ for a  
$\delta\in\FF {p^a}$.\\
We claim that $\left<\cup H_\alpha\right>\cap C=1.$
Let $g\in \left<\cup H_\alpha\right>_G\cap C$.
On the one hand, the action of $g$ on $V$ is $\FF {p^a}$-linear, since 
$g\in C=C_G(A)$. 
On the other hand, $g(x)=x$ and $g(y)=y+\delta x$ for a 
$\delta\in\FF {p^a}$ by the previous part.
If $g^n=1$, then $y=g^n (y)=y+n\delta x$, so $n\delta=0$. 
Using that $|G|$ is coprime to $p$, we get $n$ is not divisible by $p$, 
hence $\delta=0$. Therefore, $g(y)=y$ and $g\in C_C(x)\cap
C_C(y)=1$.\\
Since $G/C\leq \textrm{Aut\;}(\FF {p^a})$, for any  
$g\neq h\in\cup H_\alpha$ we have $\sigma_g\neq\sigma_h$. Furthermore,
the subfields of $\FF {p^a}$ fixed by $\sigma_g$ and $\sigma_h$ 
are the same if and only if $\left<g\right>_G=\left<h\right>_G$.\\
If $g\in H_\alpha\cap H_\beta$, then 
$g(y)=y+(\alpha-\alpha^{\sigma_g})x=y+(\beta-\beta^{\sigma_g})x$, so
$\alpha-\beta$ is fixed by $\sigma_g$.\\
Let $K_g=\{\alpha\in\FF {p^a}\;|\;g\in H_\alpha\}$.
The previous calculation shows that $K_g$ is an additive coset
of the subfield fixed by $\sigma_g$, so $|K_g|=p^d$ for some $d|a$. 
Since for any $d|a$ there is a unique $p^d$-element subfield of $\FF
{p^a}$, we get $|K_g|\neq |K_h|$ unless the subfields fixed by
$\sigma_g$ and $\sigma_h$ are the same. 
As we have seen, this means $\left<g\right>_G=\left<h\right>_G$. 
Consequently, $|K_g|\neq |K_h|$ unless $K_g=K_h$.
Hence we get the following 
\[
\big|\!\!\!\!\bigcup_{g\,\in\,\cup H_\alpha\setminus\{1\}}\!\!\!\! K_g\big|
\leq\sum_{d|a,\;d<a}p^d\leq 
\sum_{d<a}p^d=\frac{p^a-1}{p-1}<p^a.
\]
So there is a $\gamma\in\FF{p^a}$ which is not contained in $K_g$ for any
$g\in\cup H_\alpha\setminus\{1\}$. This exactly means
$H_\gamma=C_G(x)\cap C_G(y+\gamma x)=1$.
\end{proof}
Henceforth, it is enough to find suitable 
$x,y\in V\simeq\FF {p^a}^e$ vectors for such $G\leq GL(e,p^a)$
solvable $p'$-groups,
which have $A\leq F\leq G$ normal subgroups, $A$ consists of scalar
matrices, and parts \textsl{4,5,7} of Theorem \ref{thm:maxprim} 
holds to $F$.\\
Observe that for each $p\neq 2$ prime the \textsl{4}th part of Theorem
\ref{thm:maxprim} determines the isomorphic type of the $p$-Sylow
subgroup of $F$, since there are two types of extraspecial groups 
of order $p^{2d+1}$ for any $p$: For $p\neq 2$ one of them has exponent
$p$, the other one has exponent $p^2$. However, for $p=2$ both of them
has exponent 4. In this case one of them is the central product of
$d$ copies of dieder groups $D_4$, the other one is the central
product of a quaternion group $Q$ and $d-1$ copies of $D_4$. This
gives us two possible isomorphism type to $F$. We say that $F$ is
monomial, if in the above decomposition of $F$ either each $p_i\neq 2$
(that is, $e$ is odd), or the occuring extraspecial subgroup in $F$,
say $P_1$, is a central power of $D_4$. Otherwise, we say that $F$
is not monomial. (The explanation of our term ``monomial'' is that 
in the first case we can choose a basis such that written in this
basis every element of $F$ will be monomial matrix.)
\subsection{Finding $x,y\in V$ in case $F$ is monomial}
Let in the following $F\nor G\leq GL(V)\simeq GL(e,p^a)$, where $F$
monomial, that is, the extraspecial subgroup of $F$ occuring in
part \textsl{4} of Theorem \ref{thm:maxprim} is a central power of $D_4$
(maybe trivial). 
The next theorem help us to find a ``good'' basis to $F$.
\begin{thm}\label{thm:fitting_mon}
With the above assumptions the following hold to $F\nor GL(V)$:
  \begin{enumerate}
  \item There is a decomposition $F=D\rtimes S$ such that $D=A\times D_0$, 
    and
    \[
	D_0\simeq S\simeq Z_{p_1}^{e_1}\times
	Z_{p_2}^{e_2}\times\ldots\times Z_{p_k}^{e_k}.
    \]
  \item There is a basis $\monomialbasis\in V$ such that written 
  in this basis $D$ consists of diagonal matrices and 
  $S$ regularly permutes the elements of this basis.
  \item The subspaces $\left<\ud u_i\right>,\ 1\leq i\leq e$ are
  all the irreducible representations of $D_0$ over $\FF {p^a}$,
  and they are pairwise non-equivalent.
  \item If $g\in D_0$, then $g$ contains all of the $o(g)$-th
  roots of the unity with the same multiplicity.
  \end{enumerate}
\end{thm}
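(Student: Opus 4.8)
The plan is to build the decomposition and basis simultaneously, working prime by prime, since $F = AP_1P_2\cdots P_k$ is a central product with the $P_i$ of coprime order. First I would treat a single extraspecial $p$-group $P_i$ of order $p_i^{2e_i+1}$ with $Z(P_i) = A \cap P_i$; in the monomial case this is either a $p_i$-group of odd exponent $p_i$ (for $p_i$ odd) or a central power of $D_4$ (for $p_i = 2$). In either case $P_i/Z(P_i) \cong Z_{p_i}^{2e_i}$ is a symplectic space over $\FF{p_i}$ with respect to the commutator form, and one can choose a symplectic basis realizing $P_i/Z(P_i)$ as a direct sum of $e_i$ hyperbolic planes. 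Lifting the two ``halves'' of this basis gives two elementary abelian subgroups $D_0^{(i)}, S^{(i)} \le P_i$ of order $p_i^{e_i}$, each isomorphic to $Z_{p_i}^{e_i}$, with $P_i = (A\cap P_i) D_0^{(i)} S^{(i)}$ and $[D_0^{(i)}, S^{(i)}] = A \cap P_i$. (For $D_4$ this is just the standard picture: $D_4 = \langle r\rangle \langle s\rangle$ with $\langle r\rangle, \langle s\rangle$ the two noncentral involution subgroups of order $2$; the central power inherits this.) Setting $D_0 = \prod_i D_0^{(i)}$, $S = \prod_i S^{(i)}$, $D = A \times D_0$ then yields part (1), with $D \nor F$ because $D_0$ is normalized by each $S^{(j)}$ (as $[D_0^{(i)}, S^{(j)}] = 1$ for $i \ne j$ and $[D_0^{(i)}, S^{(i)}] \le A$) and centralized by $A$.

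For parts (2)--(4) I would use the irreducible representation theory of $F$ on $V \simeq \FF{p^a}^e$. Since $A$ acts by scalars and contains all $p_i$-th roots of unity, the restriction of $V$ to $D_0$ decomposes into one-dimensional eigenspaces; because $D_0 \cong Z_{p_1}^{e_1}\times\cdots\times Z_{p_k}^{e_k}$ has exactly $e = \prod p_i^{e_i}$ linear characters over $\FF{p^a}$ (the relevant roots of unity being present in $\FF{p^a}$), and $V$ has dimension $e$ and $F$ acts irreducibly, each of these $e$ characters must occur in $V|_{D_0}$ with multiplicity exactly one. This gives a basis $\ud u_1,\dots,\ud u_e$ of $D_0$-eigenvectors, one for each character, and immediately yields part (3): the $\langle \ud u_i\rangle$ are precisely the $e$ pairwise inequivalent irreducibles of $D_0$. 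That $D$ (i.e.\ also $A$) acts diagonally in this basis is clear since $A$ acts by scalars. For the $S$-action: each $s \in S^{(i)}$ normalizes $D_0$ and acts on the set of $D_0$-characters by $\chi \mapsto \chi^s$; a short computation with the commutator relation $[d, s] \in A\cap P_i$ shows $\chi^s = \chi \cdot \lambda_{s,d}$ where $\lambda$ is the (nondegenerate) symplectic pairing, so $S$ acts on the character set — hence on the basis $\{\langle \ud u_i\rangle\}$ — as the translation action of $Z_{p_i}^{e_i}$ on its dual, which is regular. After rescaling the $\ud u_i$ appropriately one arranges that $S$ permutes the vectors $\ud u_i$ themselves (not just the lines), giving part (2).

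Part (4) is the one genuinely computational point, but it is local: if $g \in D_0$, then $g = \prod_i g_i$ with $g_i \in D_0^{(i)}$ of order dividing $p_i$, so it suffices to show that each nontrivial $g_i$ acts on $V$ with every $p_i$-th root of unity occurring as an eigenvalue with the same multiplicity $e/p_i$. This follows from part (3) together with the structure of the character group: the $D_0$-characters on which $g_i$ is nontrivial split into $p_i$ cosets of $\ker(g_i)$ of equal size according to the value $g_i$ takes, and each coset has $e/p_i$ elements; since distinct characters give distinct basis lines (part (3)), the eigenvalue $\zeta$ of $g$ on $\ud u_j$ is $\chi_j(g)$, and these values are equidistributed over the $o(g)$-th roots of unity. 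Assembling over the primes $p_i$ (using that the $g_i$ commute and their eigenvalues multiply) gives the full statement.

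The main obstacle is bookkeeping in the monomial $p_i = 2$ case: one must check that ``central power of $D_4$'' really does admit the clean symplectic splitting $P_1 = (A\cap P_1)D_0^{(1)}S^{(1)}$ with both factors elementary abelian of order $2^{e_1}$ — the potential pitfall being that a careless choice of symplectic basis could produce a factor containing an element of order $4$ (which would happen if one landed inside a $Q_8$ summand). Since we have excluded the quaternion case by the monomiality hypothesis, every hyperbolic plane lifts to a copy of $D_4$ and the two isotropic lines lift to the two Klein-four... no: to the two order-$2$ noncentral subgroups, so the factors are genuinely elementary abelian; I would state this carefully but not belabor it. Everything else is a direct translation between the symplectic geometry of $F/A$ and the eigenspace combinatorics of $V|_{D_0}$.
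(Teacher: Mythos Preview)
Your plan is correct and is essentially the paper's own argument: both build $D$ and $S$ from a symplectic (commutator) basis of each $P_i/Z(P_i)$, then obtain the monomial basis from the $D_0$-eigenspace decomposition together with the regular $S$-action coming from the nondegenerate commutator pairing. The only organizational difference is that the paper first fixes a single $D$-eigenvector $\ud u_1$, \emph{defines} $D_0=C_D(\ud u_1)$, and takes the basis to be the $S$-orbit $\{s(\ud u_1)\mid s\in S\}$; this makes the regularity of $S$ on the basis and the identity $D=A\times D_0$ immediate, whereas your ordering (define $D_0=\prod_i D_0^{(i)}$ first, then decompose $V|_{D_0}$ and afterwards ``rescale'') requires you to invoke Clifford/irreducibility to see that all $e$ characters occur with multiplicity one before you know the eigenspaces form a basis---your sentence ``$\dim V=e$ and $F$ acts irreducibly, so each character occurs once'' is not self-contained until you have also used the transitive $S$-action you describe two sentences later, so tighten that ordering. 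Once you pick $\ud u_1$ in the trivial $D_0$-eigenspace and take the $S$-orbit, your $D_0$ coincides with the paper's $C_D(\ud u_1)$, so the two routes converge completely.
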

\begin{proof}
It is well-known that any extraspecial $p$-group is the central
product of non-abelian groups of order 
$p^3$. Taking our restriction to $P_i$ in to account in case $p_i=2$
and using that the exponent of $P_i$ is $p_i$ for $p_i>2$ we can find 
generators
\[
P_i=\left<x_{i,1},x_{i,2},\ldots,x_{i,e_i}, y_{i,1},y_{i,2},
\ldots y_{i,e_i},z_i\right>,
\]
such that any generator is of order $p_i$,
$Z(P_i)=\left<z_i\right>$, and $[x_{i,l},y_{i,l}]=z_i$ for all
$1\leq l\leq e_i$, any other pair of generators are commuting.
Now, let $D_i=\left<x_{i,1},x_{i,2},\ldots,x_{i,e_i}\right>$, and
$S_i=\left<y_{i,1},y_{i,2},\ldots,y_{i,e_i}\right>$.
Finally, let
\[
  D=A\times D_1\times D_2\times\cdots\times D_k\textrm{\quad és\quad}
  S=S_1\times S_2\times\cdots\times S_k. 
\]
Now, it should be clear that the decomposition $F=D\rtimes S$ fulfill
the requirement \textsl{1}. (Although we did not fix $D_0$-t yet!)
Using part \textsl{4} of Theorem \ref{thm:maxprim}, we get
$A=\FF{p^a}^*$ contains all of the $\exp(D)$-th roots of unity, hence
every irreducible representation of $D$ over $\FF{p^a}$ is one dimensional.
Fix an $\ud u_1\in V$ in such a way that $\FF {p^a}\ud u_1$ is a
$D$-invariant subspace. Choosing $D_0=C_D(\ud u_1)$ we have $D=A\times
D_0$.\\
Now, let the basis $\{\mb\}$ be defined as the set
$\{s(\ud u_1)\,|\, s\in S\}$. First of all, $e=|S|=\dim V$. As 
$D\nor F$, it follows that
 $Ds(\ud u_1)=sD(\ud u_1)$, so $\FF {p^a}s\ud u_1$ is also a
$D$-invariant subspace for all $s\in S$. Hence
$\left<\ud u_1,\ud u_2,\ldots,\ud u_e\right>$ is an $F=DS$-invariant
subspace, so it is equal to $V$ by part  \textsl{7} of Theorem 
\ref{thm:maxprim}. Therefore, $\{\ud u_1,\ud u_2,\ldots,\ud u_e\}$
is indeed a basis of $V$. From our construction \textsl{2} clearly follows.\\
The \textsl{3}rd part of the statement follows easily from the fact 
$C_S(D_0)=1$. Let $\ud u_i=s(\ud u_j)$, where $1\neq s\in
S$. Furthermore, let $d\in D_0$ such that the scalar matrix 
$[d,s]\neq 1$. Then
\[
d_{jj}\ud u_j=d(\ud u_j)=ds(\ud u_i)= sd[d,s](\ud u_i)=[d,s](d_{ii}\ud u_j),
\]
so $d_{jj}\neq d_{ii}$, which proves that these representations are
pairwise non-isomor\-phic. The statement that these representations give us
all the irreducible representations of $D_0$ follows from the fact $|D_0|=e$.\\
Finally, in view of the last statement, part 
\textsl{4} is just a special case of a more general statement to any 
$A$ finite abelian group and to the groups of linear characters of $A$
over $K$ with the assumptions $(|A|,|K|)=1$ and $K$ contains all of the 
$\exp(A)$-th roots of unity.
\end{proof}
In the following we fix a basis $\mb$, which fulfill the requirements
of the above theorem. With respect to this basis, we identify 
$GL(V)$ with the matrix group $GL(e,p^a)$.
Thus, $F=DS\nor G\leq GL(e,p^a)$, where $D$ is the group of
diagonal matrices in $F$
and $S$ is the group of permutation matrices in $F$
acting regularly on the selected basis. 
Furthermore, $D=A\times D_0$, where $D_0=C_D(\ud u_1)=C_F(\ud u_1)$.

To find a base $x,y\in V$ we write them as a linear combination of the
matrices $\mb$ in such a way that $x$ contains only a few 
(one or three) $\ud u_i$ with non-zero coefficients, while $y$ a lot
of them.

Our next lemma collects some consequences of the choice $x=\ud u_1$: 
\begin{lem}\label{thm:monlem}
Let $g\in G$ be any group element fixing $g(\ud u_1)=\ud u_1$. Then
\begin{enumerate}
\item $D_0^g=D_0$, and $g$ is a monomial matrix.
Hence there exists a $g=\delta(g)\pi(g)$ decomposition of $g$ to a
diagonal matrix $\delta(g)$ and to a permutation matrix $\pi(g)$.
\item $\pi(g)$ normalizes $S$, that is, $S^{\pi(g)}=S$.
\item Both $\delta(g)$ and $\pi(g)$ normalize $F$, so
$F=F^{\delta(g)}=F^{\pi(g)}$. Moreover, $[\delta(g), S]\leq D$.
\item If $\delta(g)\neq 1$, then the numbers of $1$'-s 
in the main diagonal of $\delta(g)$ is at most $\frac{3}{4}e$.
\end{enumerate}
\end{lem}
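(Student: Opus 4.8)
The plan is to exploit the structure established in Theorem~\ref{thm:fitting_mon}: $F = DS$ with $D = A \times D_0$ diagonal, $S$ permuting the basis $\mb$ regularly, and the subspaces $\left<\ud u_i\right>$ being pairwise non-equivalent as $D_0$-modules. Throughout I fix $g \in G$ with $g(\ud u_1) = \ud u_1$.

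For part \textsl{1}, I would first note that $D_0 = C_F(\ud u_1)$, and since $g$ normalizes $F$ (as $F \nor G$) and fixes $\ud u_1$, conjugation by $g$ sends $C_F(\ud u_1)$ to $C_F(g(\ud u_1)) = C_F(\ud u_1)$; hence $D_0^g = D_0$. To see $g$ is monomial: $g$ permutes the set of $D_0$-submodules of $V$ of a fixed isomorphism type, but by part \textsl{3} of Theorem~\ref{thm:fitting_mon} each $\left<\ud u_i\right>$ is the unique submodule in its isomorphism class, so $g$ permutes the lines $\left<\ud u_i\right>$ among themselves; that is exactly the statement that $g$ is monomial. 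Writing $g = \delta(g)\pi(g)$ with $\pi(g)$ the underlying permutation matrix and $\delta(g)$ diagonal is then automatic, and since $g$ fixes $\ud u_1$ the permutation $\pi(g)$ fixes the index $1$ and $\delta(g)$ has a $1$ in the $(1,1)$ entry.

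For part \textsl{2}: $S$ acts regularly on $\{\ud u_1, \ldots, \ud u_e\}$, so $S$ is identified with a regular subgroup of the symmetric group on the indices; $\pi(g)$ fixes the point $1$. I would argue that $\pi(g)$ normalizes $S$ by using that $g$ normalizes $F = DS$ and that $D$ consists of diagonal matrices while $S$ consists of permutation matrices, so the ``permutation part'' of the normalizer action is $\pi(g)$-conjugation; since $D^g = D$ and the image of $F$ in $GL(V)/D$ (abstractly the permutation quotient) is $S$, we get $S^{\pi(g)} = S$. Part \textsl{3} then follows formally: $F^g = F$ together with $D^g = D$ forces both the diagonal part $\delta(g)$ and the permutation part $\pi(g)$ to normalize $F$ (one checks $F^{\delta(g)} = (F^{\pi(g)})^{\pi(g)^{-1}\delta(g)^{-1}\,\cdots}$, more cleanly: $F^{\pi(g)} = F$ from part \textsl{2} since $D \le F$ and $S^{\pi(g)} = S$, hence $F^{\delta(g)} = F^{g \pi(g)^{-1}} = F$ as well). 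For $[\delta(g), S] \le D$: for $s \in S$, the commutator $[\delta(g), s]$ is a product of diagonal and permutation matrices that lies in $F$ (since $\delta(g)$ normalizes $F$ and $s \in F$) and is a diagonal matrix (a conjugate of a diagonal matrix by a permutation matrix is diagonal, and the product of two diagonals is diagonal), hence lies in $F \cap \{\text{diagonal matrices}\} = D$.

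\textbf{The main obstacle} is part \textsl{4}: bounding the number of $1$'s on the diagonal of $\delta(g) \neq 1$ by $\tfrac34 e$. Here I would use part \textsl{4} of Theorem~\ref{thm:fitting_mon}, which says each non-identity element of $D_0$ has each $o(g)$-th root of unity appearing on its diagonal with equal multiplicity. The key point is that $\delta(g)$, while not a priori in $D_0$, is constrained: since $g$ fixes $\ud u_1$ and $\delta(g)$ is the diagonal part, one shows $\delta(g)$ conjugates $D_0$ into $D_0$ and the ``extra'' diagonal freedom is measured against $D_0$; the worst case is controlled by the prime $p_i = 2$, where the relevant element of the associated elementary abelian group has eigenvalues $\pm 1$ each with multiplicity $e/2$, giving at most $e/2 \le \tfrac34 e$ ones, while for $p_i = 3$ one gets at most $e/3$, etc. — the bound $\tfrac34 e$ is the uniform weak bound covering all cases, including the subtlety that $\delta(g)$ may differ from an element of $D_0$ by a global scalar from $A$, which only permutes the eigenvalue-multiplicities and does not increase the count of $1$'s beyond $\tfrac34 e$. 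I would organize this by reducing to a single prime power factor, invoking Theorem~\ref{thm:fitting_mon}\textsl{(4)}, and checking the arithmetic $\max(1/2, 1/3, \ldots) = 1/2 \le 3/4$ with a small margin to absorb the scalar ambiguity.
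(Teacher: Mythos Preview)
Your treatment of parts \textsl{1}--\textsl{3} is essentially the paper's argument, stated a bit more abstractly. In particular, for part \textsl{2} the paper computes $s^g$ explicitly and reads off that its permutation part $s^{\pi(g)}$ lies in $F$, hence in $S$; your ``projection to the permutation quotient'' argument amounts to the same thing once you note that $D = F \cap \{\text{diagonal matrices}\}$ is preserved under monomial conjugation.

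Part \textsl{4}, however, has a real gap. You are trying to apply Theorem~\ref{thm:fitting_mon}\,\textsl{(4)} directly to $\delta(g)$, treating it as an element of $D = A \times D_0$ up to a scalar. But $\delta(g)$ need not lie in $D$ at all: it is merely the diagonal part of an arbitrary monomial element of $C_G(\ud u_1)$, and nothing you have proved forces it into $F$. Your phrase ``may differ from an element of $D_0$ by a global scalar from $A$'' is exactly the unjustified step, and the remark that $3/4$ is a ``uniform weak bound'' with ``margin to absorb the scalar ambiguity'' is not how the constant arises.

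The paper's idea is to pass to the commutator. Since $\delta(g)\neq 1$ and $\delta(g)_{11}=1$, the matrix $\delta(g)$ is non-scalar, so some $s\in S$ satisfies $[\delta(g),s]\neq 1$. By part \textsl{3} this commutator \emph{does} lie in $D$, and then Theorem~\ref{thm:fitting_mon}\,\textsl{(4)} (applied to its $D_0$-component, the $A$-component being scalar) shows it has at most $e/2$ ones on its diagonal. Now the counting step: if $\delta(g)=\mathrm{diag}(d_1,\dots,d_e)$ had more than $\tfrac34 e$ entries equal to $1$, then for any permutation $\sigma$ the set $\{i: d_i=1=d_{\sigma(i)}\}$ has size exceeding $e - 2\cdot\tfrac14 e = \tfrac12 e$, so $[\delta(g),s]$ would have more than $e/2$ ones --- a contradiction. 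The $\tfrac34$ is thus the exact threshold coming from this inclusion--exclusion, not slack.
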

\begin{proof}
The statement $D_0^g=D_0$ follows from the fact $D_0=C_F(\ud 
u_1)\nor C_G(\ud u_1)$. Consequently, $g$ permutes the homogeneous
components of the $D_0$-module $V$. By part \textsl{3} of Theorem 
\ref{thm:fitting_mon}, these homogeneous components are just the
one-dimensional subspaces $\left<\ud u_i\right>,\ \ 1\leq
i\leq e$. These means that $g$ is a monomial matrix. 
Of course, a monomial matrix $g$ has a unique decomposition
$g=\delta(g)\pi(g)$, and part \textsl{1} is proved.

For any $s\in S$ we have
\[
s^g=\pi(g)^{-1}\delta(g)^{-1}s\delta(g)\pi(g)=
\pi(g)^{-1}([\delta(g),s^{-1}]s)\pi(g)
=[\delta(g),s^{-1}]^{\pi(g)}s^{\pi(g)}
\]
is an element of $F$. The expression
$[\delta(g),s^{-1}]^{\pi(g)}$ on the right-hand side is diagonal, 
while $s^{\pi(g)}$ is permutation matrix, so both of them are elements
of $F$. However, any permutation matrix in $F$ is contained in $S$, so
$s^{\pi(g)}\in S$, and \textsl{2} follows.

Both $g$ and $\delta(g)$ normalize $D$, hence
$\pi(g)=\delta(g)^{-1}g$, too. 
We have seen that $\pi(g)$ normalizes $S$, so it normalizes also
$F=DS$. We get $\delta(g)=g\pi(g)^{-1}$ also normalizes 
$F$. Finally, the statement $[\delta(g), S]\leq D$ follows from the
fact that the commutator of a permutation matrix by a diagonal matrix
is also diagonal. So \textsl{3} holds.

If $\delta(g)\neq 1$, then $\delta(g)$ is not a scalar matrix, so 
there exists an $s\in S$ such that $[\delta(g), s]\not =1$. Now,
$[\delta(g), s]\in D\setminus \{ 1\}$, so, using part \textsl{4} of
Theorem  \ref{thm:fitting_mon}, we get the number of $1$'-s in the
main diagonal of $[\delta(g), s]$ is at most $\frac{1}{2}e$. 
This cannot be true if the number of $1$'-s in $\delta(g)$ is more
than $\frac{3}{4}e$. We are done.
\end{proof}
We saw that for any $g\in C_G(\ud u_1)$ there is a unique
decomposition $g=\delta(g)\pi(g)$. The map $\pi:g\rightarrow \pi(g)$
gives us a homomorphism from $C_G(\ud u_1)$ into the group of
permutation matrices.

By part \textsl{2} of Lemma \ref{thm:monlem} Lemma, $\pi(C_G(\ud
u_1))$ normalizes $S$, so it acts on $S$ by conjugation, which defines
a $\pi(C_G(\ud u_1))\rightarrow \textrm{Aut}(S)$ homomorphism. 
In fact, this homomorphism is an inclusion, since 
$C_G(\ud u_1)\cap C_G(S)=1$. Therefore,
$\pi(C_G(\ud u_1))\leq \textrm{Aut}(S)\simeq GL(e_1,p_1)
\times GL(e_2,p_2)\times\cdots GL(e_k,p_k)$.

This is usefull to us, because we can apply Theorems 
\ref{thm:affin_perm} and \ref{thm:kevert}, to find a $\pi(C_G(\ud
u_1))$-regular partition of $S$. Moreover, we do not need to fix the
zero element of $S$ (that is, the identity matrix); 
we already fixed it by choosing $x=\ud u_1$. 
Since $S$ acts on the basis $W=\{\ud u_1,\ud u_2,\ldots,\ud u_e\}$ regularly,
using the bijection $s\rightarrow s(\ud u_1)$ we can define a partition 
$W=\{\ud u_1\}\cup\Omega_2\ldots\cup\Omega_l$, 
which is also $\pi(C_G(\ud u_1))$-regular. 
\subsubsection{Case $e\neq 2^t$}
In the following we will assume that $|D_0|=|S|=e$ is not a
2-power. In every such case let $x=\ud u_1$.
By the last paragraph, we have a $\pi(C_G(\ud u_1))$ regular partition $W=\{\ud
u_1\}\cup\Omega_2\ldots\cup\Omega_l$. Let $\alpha\in \FF {p^a}$ be a generator
element of the multiplicative group of $\FF {p^a}$. Now,
$o(\alpha)=|A|\geq 6$, since $|A|$ is even ($p\neq 2$) and every prime
divisor of $e$ divides $|A|$. 
\begin{thm}\label{thm:mon_eneq2^k}
With the above notations let $y$ be defined as follows
\[
\begin{array}{ll}
\textrm{For }e\neq 3^k:& \displaystyle{y=0\cdot\sum_{\ud
u_i\in\Omega_2} \ud u_i+1\cdot\sum_{\ud u_i\in\Omega_3} \ud u_i,}\\
\textrm{For }e=3^k,\ k\geq 2:&\displaystyle{ y=\alpha\cdot\sum_{\ud
u_i\in\Omega_2} \ud u_i+0\cdot\sum_{\ud u_i\in\Omega_3} \ud u_i+
1\cdot\sum_{\ud u_i\in\Omega_4} \ud u_i,}\\
\textrm{For  }e=3:& y=\displaystyle{\alpha\ud u_2+\ud u_3.}
\end{array}
\]
Then $C_G(x)\cap C_G(y)=1$.
\end{thm}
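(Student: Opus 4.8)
Let $g \in C_G(x)\cap C_G(y) = C_G(\ud u_1)\cap C_G(y)$. Since $g$ fixes $\ud u_1$, Lemma~\ref{thm:monlem} applies: $g$ is monomial, $g = \dg\pg$ with $\dg$ diagonal and $\pg$ a permutation matrix, and $\pg$ normalizes $S$. The first goal is to show $\pg = 1$. The key point is that $\pg$ acts on the index set $W = \{\ud u_1,\dots,\ud u_e\}$ (fixing $\ud u_1$), hence induces an element of $\pi(C_G(\ud u_1)) \le \mathrm{Aut}(S)$, and the partition $W = \{\ud u_1\}\cup\Omega_2\cup\cdots\cup\Omega_l$ was chosen to be $\pi(C_G(\ud u_1))$-regular. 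So I must verify that $g$ \emph{fixes each $\Omega_j$ setwise}, because then $\pg$ lies in the stabilizer of every part, which is trivial by regularity, giving $\pg = 1$.

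To see that $g$ preserves the parts, I look at how $g$ acts on $y$. Applying $g = \dg\pg$ to $y$: $\pg$ permutes the $\ud u_i$ (fixing $\ud u_1$ and permuting each $\Omega_j$ among the $\ud u_i$'s), and $\dg$ then rescales each resulting basis vector. So $g(y)$ is again a linear combination of the $\ud u_i$ with the coefficient on $\ud u_i$ equal to $(\dg)_{ii}$ times the coefficient of $\pg^{-1}(\ud u_i)$ in $y$. For $g(y) = y$ we need, coordinate by coordinate, that the multiset of coefficients is permuted compatibly. In each of the three cases the coefficients used are \emph{distinct nonzero field elements} on distinct parts (e.g.\ for $e \ne 3^k$ the coefficient is $0$ on $\Omega_2$ — wait, but these are coefficients of $\ud u_i$, and $\ud u_i \in \Omega_2$ gets coefficient $0$, $\ud u_i \in \Omega_3$ gets coefficient $1$, everything else gets $0$). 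The crucial observation is that $\Omega_3$ — the set of basis vectors with coefficient $1$ in $y$ — is a single part of ``unique size'' by Corollary~\ref{kov:affin_perm}; the support of $y$ is exactly $\Omega_3$ (resp.\ $\Omega_2 \cup \Omega_4$ in the $3^k$ case, with two \emph{different} nonzero coefficients $\alpha$ and $1$ distinguishing the two parts, and $\alpha$ has order $\ge 6 > 1$ so $\alpha \ne 1$). Since $\dg$ only rescales and never moves a nonzero coordinate to a zero one or vice versa, $g(y) = y$ forces $\pg$ to preserve the support of $y$, and within the support to preserve the level sets of the coefficient function — i.e.\ to fix $\Omega_3$ (and $\Omega_2$, $\Omega_4$) setwise. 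Combined with $\pg(\ud u_1) = \ud u_1$ and the fact that the partition has at most $p \ge 3$ (resp.\ the relevant number of) parts, $\pg$ fixes all parts, so $\pg = 1$ by regularity.

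Once $\pg = 1$ we have $g = \dg$, a diagonal matrix fixing $\ud u_1$, so $(\dg)_{11} = 1$. Now $g(y) = y$ reads: $(\dg)_{ii}$ equals $1$ for every $i$ in the support of $y$ (since the coefficient there is a fixed nonzero scalar $c$ and $c\cdot(\dg)_{ii} = c$). In the $3^k$ case this gives $(\dg)_{ii} = 1$ for $\ud u_i \in \Omega_2 \cup \Omega_4$, and since $\ud u_1 \in \Omega_1$, I also use that $\Omega_2 \cup \Omega_4$ together with $\{\ud u_1\}$ spans enough of the indices; in any case the support of $y$ has more than $\frac{1}{4}e$ elements by the ``large part'' estimate in Corollary~\ref{kov:affin_perm} (for $e = 3^k$, $|\Omega_2| + |\Omega_4| = e - |\Omega_3| - 1$ and $|\Omega_3| < \frac14 e$, so the support has $> \frac34 e - 1$ elements; similar bookkeeping in the other cases). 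Hence $\dg$ has more than $\frac{3}{4}e$ ones on its diagonal. By part~\textsl{4} of Lemma~\ref{thm:monlem}, this is impossible unless $\dg = 1$. Therefore $g = \dg\pg = 1$, proving $C_G(x)\cap C_G(y) = 1$.

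**Main obstacle.** The delicate part is the two small exceptional cases and the exact counting: verifying that the support of $y$ always exceeds $\frac34 e$ (so Lemma~\ref{thm:monlem}\textsl{.4} bites), and handling $e = 3^k$ with its two distinct coefficients $\alpha \ne 1 \ne 0$ — one must check $\alpha$ genuinely has order $\ge 2$ (guaranteed since $|A| \ge 6$) so that $\Omega_2$ and $\Omega_4$ are distinguished by their coefficient, and that the case $e = 3$ (where the partition of $S$, a $2$-dimensional $\FF 3$-space minus nothing, has only $3$ nonzero vectors) is covered by the explicit formula $y = \alpha\ud u_2 + \ud u_3$. I expect the rest to be routine once the partition's ``unique size'' and ``large part'' properties from Corollary~\ref{kov:affin_perm} are invoked.
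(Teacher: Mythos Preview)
Your argument for the case $e\neq 3^k$ is correct and matches the paper: since the support of $y$ is exactly $\Omega_3$, preservation of the support gives $\pg(\Omega_3)=\Omega_3$, hence $\pg$ fixes every part and regularity yields $\pg=1$; then the diagonal count via Corollary~\ref{kov:affin_perm} and Lemma~\ref{thm:monlem}\textsl{.4} finishes it.

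The gap is in the case $e=3^k$. Your assertion that ``within the support $\pg$ preserves the level sets of the coefficient function'' is false. The diagonal factor $\dg$ can compensate for a change of level: if the single element $\ud u_a$ of $\Omega_2$ (coefficient $\alpha$) is sent by $\pg$ to some $\ud u_b\in\Omega_4$ (coefficient $1$), then the equation $g(y)=y$ merely forces $(\dg)_{aa}=\alpha$ and $(\dg)_{bb}=\alpha^{-1}$, which is perfectly consistent. So from $g(y)=y$ you only get that $\pg$ fixes $\Omega_3$ (the zero part) setwise, not that it separates $\Omega_2$ from $\Omega_4$; consequently you cannot conclude $\pg=1$ at this stage, and the rest of your argument (which assumed $\pg=1$ before counting diagonal $1$'s) does not go through.

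The paper repairs this as follows. Allowing the possible $\Omega_2\leftrightarrow\Omega_4$ swap, one still knows that the non-$1$ diagonal entries of $\dg$ occur only at indices in $\Omega_3$ (unconstrained) together with at most two further indices coming from the swap. Thus the number of non-$1$ entries is at most $|\Omega_3|+2=2k$, and for $k\geq 3$ this is strictly less than $\tfrac14\,3^k$, so Lemma~\ref{thm:monlem}\textsl{.4} forces $\dg=1$; this \emph{a posteriori} rules out the swap and gives $\pg=1$. For the residual cases $e=9$ and $e=3$ this count fails ($|\Omega_3|+2\geq\tfrac14 e$), and the paper instead computes $[\dg,s]$ for a suitable $s\in S$ and shows, using the eigenvalue-multiplicity structure of $D=A\times D_0$ from Theorem~\ref{thm:fitting_mon}\textsl{.4} together with $o(\alpha)\geq 6$, that the resulting diagonal matrix cannot lie in $D$, contradicting Lemma~\ref{thm:monlem}\textsl{.3}. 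Your sketch does not contain any analogue of this commutator step, and it is genuinely needed for $e\in\{3,9\}$.
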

\begin{proof}Let $g\in C_G(x)\cap C_G(y)$. 
Since $g$ fixes $\ud u_1=x$, $g$ is a monomial matrix 
by Lemma \ref{thm:monlem}, so we have a decomposition $g=\dg\pg$.

Our first observation in case $e\neq 3^k$ is that $\pi(g)$ fixes the
subset $\Omega_2\sbs W$ részhalmazt. To see this, notice that if the
monomial matrix $g$ fixes $y$, then $\pi(g)$ permutes the basis elements
appearing in $y$ with zero coefficients between each other.
So $\pi(g)$ fixes both $\ud u_1\cup\Omega$ and $\ud u_1$ (since $g$ does),
therefore it fixes $\Omega_2$. Since $W=\{\ud
u_1\}\cup\Omega_2\cup\Omega_3$ is a $\pi(C_G(\ud u_1))$-regular
partition, we get $\pi(g)=1$. Hence $g=\delta(g)$ is a diagonal
matrix. If $g_{ii}$ denote the $i$-th element of the main diagonal of
$g$, then $g\in C_G(y)$ holds only if $g_{ii}=1$ for all $\ud
u_i\in\Omega_3$. Since $e$ is neither $2$-power nor $3$-power, we can apply
Theorem \ref{thm:kevert} and the second part of Corollary 
\ref{kov:affin_perm} to get $|\Omega_2|<\frac{1}{4} e$.
Using part \textsl{4} of Lemma \ref{thm:monlem} it follows that $g=1$.

In case of $e=3^k,\ k\geq 3$ we see that $\pg$ fixes the subset 
$\Omega_3\sbs W$, since these elements occur with non-zero coefficient
$0$ in $y$. (not counting $x=\ud u_1$ which is already fixed by $g$.)
However, in this case it is possible that $\pi(g)$ takes the unique element of
$\Omega_2$ into an element of $\Omega_4$. Of course, in that case it
takes an element of $\Omega_4$ into the element of
$\Omega_2$. This results the appearance of an $\alpha$ and an
$\alpha^{-1}$ in the main diagonal of $\dg$. It follows that the
number of $\neq 1$ elements in the main diagonal of $\dg$ is at most 
$|\Omega_2|+2$, which is less than $\frac{1}{4}e$ by Corollary 
\ref{kov:affin_perm}, if $e\neq 9$.
By part \textsl{4} of Lemma \ref{thm:monlem} we get $\dg=1$, 
hence $\pi(g)$ also fixes the unique element of $\Omega_2$,
so $g=\pi(g)=1$.

It remains to examine the cases $e=9$ and $e=3$. 
In case of $e=9$ we have $y=\alpha\cdot \ud u_i+0\cdot \ud u_j+1\cdot
\sum_{k\neq i,j,1}\ud u_k$. Then $\pg$ fixes $\ud u_j$.
If $\pi(g)$ fixes also $\ud u_i$, then $\pi(g)=1$. 
In this case the only not necessarily $1$ element in the main diagonal
of $g=\delta(g)$ is $g_{jj}$. Using part \textsl{4} of Lemma 
\ref{thm:monlem} we get $g=1$.
If $\pi(g)$ does not fixes $\ud u_i$, then in the main diagonal of
$\dg$ there are an $\alpha$ and an $\alpha^{-1}$,
possibly $\dg_{jj}\neq 1$, any other element is $1$. Since $S$
acts regularly on $W$, we can choose an element $s\in S$ which takes
the bases element corresponding to $\alpha^{-1}$ into the bases element
corresponding to $\alpha$. Then, in the main diagonal of $[\dg,s]$ 
appear an $\alpha^2\neq 1$ and at least four $1$'-s. 
However, there is no such an element in $D=A\times
D_0$ by part \textsl{4} of Theorem \ref{thm:fitting_mon}, 
contradicting to part \textsl{3} of Lemma \ref{thm:monlem}.

Finally, let $e=3$. If $g\in C_G(x)\cap C_G(y)$ is diagonal, 
then clearly $g=1$. Otherwise,
\[
\dg=
{\s
\begin{pmatrix}
1&0&0\\
0&\alpha&0\\
0&0&\alpha^{-1}
\end{pmatrix}
}
\textrm{\ and\ }
[\dg,s]=
{\s
\begin{pmatrix}
\alpha&0&0\\
0&\alpha^{-2}&0\\
0&0&\alpha
\end{pmatrix}
},
\textrm{\ for\ }
s=
{\s
\begin{pmatrix}
0&0&1\\
1&0&0\\
0&1&0
\end{pmatrix}
}\in S.
\]
Since $o(\alpha)\geq 6$ we get $\alpha\neq \alpha^{-2}$, 
so $[\dg,s]\notin D$ by part \textsl{4} of Theorem
\ref{thm:fitting_mon}, which is impossible by part \textsl{3} of Lemma
\ref{thm:monlem}.
\end{proof}
\subsubsection{Case $e=2^t$}
Keeping the assumption that $F$ is monomial, now we handle the case
$e=2^k$ for some $k$. We note that in case of $e\leq 128$ we could give
similar constructions as we did in Theorem \ref{thm:mon_eneq2^k}. 
However, for a more uniform discussion we alter these constructions a
bit, so it will be adequate even in smaller dimensions. The point of
our modification is that we do not choose $x$ as a bases element this time,
rather as a linear combination of exactly three bases vectors.
Although this effects that $C_G(x)$ will not be monomial any more, 
but we can cure this problem by a good choice of $y$.

In case $e=2$ any bases will be obviously good, let for example 
$x=\ud u_1, y=\ud u_2$.
Now, we analyze the case $e=4$. According to Theorem \ref{thm:fitting_mon},
we choose a bases $\ud u_1, \ud u_2,\ud u_3,\ud u_4\in V$.
Now, $F=AD_0S$, where the Klein groups $D_0=\left<d_1, d_2\right>$ 
and  $S=\left<s_1,s_2\right>$ are generated (independently from the
base field) by the matrices:
\begin{gather*}
d_1=
\begin{pmatrix}
1& &  &  \\
 &1&  &  \\
 & &-1&  \\
 & &  &-1
\end{pmatrix},
d_2=
\begin{pmatrix}
1& &  &  \\
 &-1&  &  \\
 & &1&  \\
 & &  &-1
\end{pmatrix},\\ 
s_1=
\begin{pmatrix}
0&1&0&0\\
1&0&0&0\\
0&0&0&1\\
0&0&1&0
\end{pmatrix}
s_2=
\begin{pmatrix}
0&0&1&0\\
0&0&0&1\\
1&0&0&0\\
0&1&0&0
\end{pmatrix}.
\end{gather*}
We could already observe that the smaller the dimension and the size
of the base field the harder to find a good pair of vectors. 
Therefore, it is not surprising that the most complicated part is to make 
clear the problem for subgroups of $G\leq GL(4,3)$, $G\leq GL(4,9)$
and $G\leq GL(4,5)$. 
In the first two case we need to use even the assumption
$(|G|,|V|)=1$, while in case of $GL(4,5)$ we found a suitable pair of 
vectors by using a computer. Our next two theorems are about these cases.
\begin{thm}\label{thm:mon_GL(4,3)}
Let $F=A\left<d_1,d_2,s_1,s_2\right>\nor G\leq GL(4,3^k)$.
Furthermore, set $x_1=\ud u_2+\ud u_3+\ud u_4$, and $y_1=\ud u_1$. 
Then $|C_G(x_1)\cap C_G(y_1)|\leq 2$. If the pair of vectors $x_1,y_1$ 
would not be a good choice, then let $1\neq g_0\in C_G(x_1)\cap
C_G(y_1)$. Then $g_0$ is a permutation matrix fixing one of the
elements $\ud u_2, \ud u_3,\ud u_4$. We can assume that 
$g_0(\ud u_2)=\ud u_2$.  
Let us define the vectors $x_2,y_2,x_3,y_3\in V$ as
\begin{gather*}
x_2=\ud u_1+\ud u_2+\ud u_4,\qquad y_2=\ud u_1+\ud u_3;\\
x_3=\ud u_1+\ud u_2-\ud u_4,\qquad y_3=\ud u_1+\ud u_3.
\end{gather*}
Now, either $C_G(x_2)\cap C_G(y_2)=1$, or $C_G(x_3)\cap C_G(y_3)=1$.
\end{thm}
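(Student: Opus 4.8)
The statement concerns $F=A\langle d_1,d_2,s_1,s_2\rangle \nor G\leq GL(4,3^k)$, with $F$ monomial. The strategy is to exploit Lemma \ref{thm:monlem}: for any $g$ fixing a basis vector, $g=\delta(g)\pi(g)$ is monomial, $\pi(g)$ normalizes $S$, and crucially (part \textsl{4}) a nontrivial $\delta(g)$ has at most $\frac34 e=3$ ones on the diagonal, i.e.\ at least one entry $\neq 1$; combined with part \textsl{4} of Theorem \ref{thm:fitting_mon} (every $d\in D_0$ contains each $o(d)$-th root of unity with equal multiplicity, so over characteristic $3$ a nonscalar element of $D$ has at most $e/2=2$ ones). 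I would first analyze $x_1=\ud u_2+\ud u_3+\ud u_4,\ y_1=\ud u_1$. Since $g\in C_G(y_1)$ fixes $\ud u_1$, Lemma \ref{thm:monlem} applies and $g$ is monomial; fixing $x_1$ forces $\pi(g)$ to permute $\{\ud u_2,\ud u_3,\ud u_4\}$ while $\delta(g)$ must reconcile the coefficients, so the coefficients of $\ud u_2,\ud u_3,\ud u_4$ in $g(x_1)$ (products of a cube root of unity $=1$ in $\FF{3^k}$ ... wait, the roots here are $\pm1$) must all equal $1$. A short case check on the possible $\pi(g)\in S_{\{2,3,4\}}$ and the constraint that $[\delta(g),s]\in D$ for $s\in S$ pins down that $\delta(g)=1$ and $\pi(g)$ is a permutation matrix of $S$ fixing exactly one of $\ud u_2,\ud u_3,\ud u_4$ — here is where the order $\leq 2$ bound and the normal form of $g_0$ come from. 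This is the step I expect to be the most delicate, because one must use $S=\langle s_1,s_2\rangle$ explicitly and track which monomial matrices in $G$ can arise; the coprimality $(|G|,3)=1$ enters to kill entries that would otherwise have order divisible by $3$.

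**The fallback pair.** Assuming such a $g_0\neq 1$ exists with $g_0(\ud u_2)=\ud u_2$, I would then show that at least one of $(x_2,y_2)$, $(x_3,y_3)$ works. For $(x_i,y_i)$ with $i\in\{2,3\}$, note $y_2=y_3=\ud u_1+\ud u_3$ is \emph{not} a basis vector, so $C_G(y_i)$ need not be monomial; but $x_2,x_3$ each involve $\ud u_1,\ud u_2,\ud u_4$ with unit coefficients (using the scalar $-1\in A$ for $x_3$). The key leverage is that $g_0$ is a specific permutation matrix: I would argue that if $h\in C_G(x_2)\cap C_G(y_2)$ is nontrivial then, by comparing with the constraints coming from $g_0$ (both lie in $G$, and $\langle g_0,h\rangle$ is still a $3'$-group of monomial-ish matrices normalizing $F$), $h$ must also be essentially a permutation matrix, and then the two equations $h(x_2)=x_2$, $h(\ud u_1+\ud u_3)=\ud u_1+\ud u_3$ are incompatible with $h\neq 1$; the sign change in $x_3$ versus $x_2$ rules out the one residual ambiguity (a diagonal $\pm1$ matrix). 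Concretely, I expect the dichotomy to reduce to: the ``bad'' element for $x_2$ and the ``bad'' element for $x_3$ would have to differ by the scalar $-1$ (or by $d_1$/$d_2$), which cannot stabilize both $\ud u_1+\ud u_3$ configurations simultaneously.

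**Main obstacle.** The hard part is the first paragraph: getting a clean, finite case analysis that simultaneously (i) bounds $|C_G(x_1)\cap C_G(y_1)|$ by $2$, (ii) shows a nontrivial element is a permutation matrix, and (iii) shows it fixes exactly one of $\ud u_2,\ud u_3,\ud u_4$. This requires combining the monomiality from Lemma \ref{thm:monlem}, the equal-multiplicity-of-roots-of-unity property, the structure of $S$ acting regularly on the four basis vectors, and the coprimality hypothesis — the interplay of all four is what makes characteristic $3$ (and $e=4$) genuinely exceptional, exactly as the text warns. Once $g_0$ is in this rigid normal form, verifying that $(x_2,y_2)$ or $(x_3,y_3)$ succeeds should be a comparatively mechanical check over $\FF{3^k}$, handled uniformly in $k$ since only the prime subfield elements $0,\pm1$ appear in the defining data.
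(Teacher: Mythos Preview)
Your analysis of the pair $(x_1,y_1)$ is overcomplicated and misidentifies where the difficulty lies. Once $g\in C_G(y_1)=C_G(\ud u_1)$ is known to be monomial (Lemma~\ref{thm:monlem}), the condition $g(x_1)=x_1$ with $x_1=\ud u_2+\ud u_3+\ud u_4$ having all coefficients equal to $1$ immediately forces the diagonal part to be trivial on $\ud u_2,\ud u_3,\ud u_4$ as well, so $g$ is a genuine permutation matrix acting on $\{\ud u_2,\ud u_3,\ud u_4\}$. The $3'$-hypothesis then gives $|C_G(x_1)\cap C_G(y_1)|\leq 2$ at once (it embeds as a $3'$-subgroup of $S_3$), and any nontrivial element is a transposition. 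No commutator computations or root-of-unity counts are needed here; this is the easy half.

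The genuine gap is in your treatment of the fallback pairs. You correctly observe that $y_2=y_3=\ud u_1+\ud u_3$ is not a basis vector, so $C_G(y_2)$ need not be monomial in the original basis --- but you then assert, without any mechanism, that a nontrivial $h\in C_G(x_2)\cap C_G(y_2)$ ``must also be essentially a permutation matrix''. There is no reason for this: $h$ could a priori be any matrix normalizing $F$, and the mere presence of $g_0$ in $G$ does not constrain the shape of $h$. The idea you are missing is a \emph{change of basis}. One computes $N:=C_F(\ud u_1+\ud u_3)=\langle d_2,s_2\rangle$ and checks that the four one-dimensional $N$-invariant subspaces $\langle\ud u_1+\ud u_3\rangle$, $\langle\ud u_1-\ud u_3\rangle$, $\langle\ud u_2+\ud u_4\rangle$, $\langle\ud u_2-\ud u_4\rangle$ are pairwise inequivalent as $N$-modules. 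Since $N\nor C_G(y_2)$, the group $C_G(y_2)$ permutes these four lines, i.e.\ it \emph{is} monomial in this new basis. It fixes $\langle\ud u_1+\ud u_3\rangle$, and the $3'$-hypothesis then forces all of $C_G(y_2)$ to fix at least one of the remaining three lines. The resulting three cases are what produce the dichotomy: if $\langle\ud u_1-\ud u_3\rangle$ is the fixed line, both $(x_2,y_2)$ and $(x_3,y_3)$ work (and here the existence of $g_0\in G$ is genuinely used --- a residual nontrivial $h$ would make $g_0h$ have order divisible by $3$); if $\langle\ud u_2+\ud u_4\rangle$ is fixed one shows $C_G(x_2)\cap C_G(y_2)=1$ by a direct coordinate computation exploiting $\tfrac12=-1$ in $\FF{3^k}$; and symmetrically $\langle\ud u_2-\ud u_4\rangle$ fixed gives $C_G(x_3)\cap C_G(y_3)=1$. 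Your proposed dichotomy (``the bad elements for $x_2$ and $x_3$ differ by $-1$'') does not match this structure and cannot be carried out without first recovering monomiality via the new basis.
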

\begin{proof}
We know that $C_G(y_1)$ consists of monomial matrices by part
\textsl{1} of Lemma \ref{thm:monlem}, so any $g\in
C_G(x_1)\cap C_G(y_1)$ acts as a permutation on the set
$\{\ud u_2,\ud u_3,\ud u_4\}$. Since the order of $|G|$ is not
divisible by $3$, we get $C_G(x_1)\cap C_g(y_1)$ is isomorphic to a
$3'$-subgroup of the symmetric group $S_3$, so
$|C_G(x_1)\cap C_G(y_1)|\leq 2$, and the first part of the theorem is proved.

Let us assume that
\[
g_0=
\begin{pmatrix}
1&0&0&0\\
0&1&0&0\\
0&0&0&1\\
0&0&1&0
\end{pmatrix}
\in G.
\]
Now, $C_G(\ud u_1+\ud u_3)$ normalizes the subgroup
$N=C_F(\ud u_1+\ud u_3)$ generated by the elements $d_2,s_2$.
It is easy to check that the $N$-invariant subspaces
\[
\left<\ud u_1+\ud u_3\right>,\ \left<\ud u_1-\ud u_3\right>,\ 
\left<\ud u_2+\ud u_4\right>,\ \left<\ud u_2-\ud u_4\right>
\]
are pairwise non-equivalent representations of $N$. Hence
$C_G(\ud u_1+\ud u_3)$ permutes these subspaces. (In other words, it
consists of momomial matrices with respect to this new basis.) 
Of course, $C_G(\ud u_1+\ud u_3)$ fixes the subspace
$\left<\ud u_1+\ud u_3\right>$. Using again that $|G|$ is not
divisible by $3$, we get at least one of the following holds:
\begin{gather*}
\forall g\in C_G(\ud u_1+\ud u_3)\Rightarrow g(\ud u_1-\ud u_3)=\alpha_g (\ud
u_1-\ud u_3)\textrm{\quad for some }\alpha_g\in\FF {p^a}^*,\\
\forall g\in C_G(\ud u_1+\ud u_3)\Rightarrow g(\ud u_2+\ud u_4)=\alpha_g (\ud
u_2+\ud u_4)\textrm{\quad for some }\alpha_g\in\FF {p^a}^*,\\
\forall g\in C_G(\ud u_1+\ud u_3)\Rightarrow g(\ud u_2-\ud u_4)=\alpha_g (\ud
u_2-\ud u_4)\textrm{\quad for some }\alpha_g\in\FF {p^a}^*.
\end{gather*}
In the first case $C_G(\ud u_1+\ud u_3)$ fixes both the $\left<\ud u_1,\ud
u_3\right>$ and the $\left<\ud u_2,\ud u_4\right>$ subspaces. 
Thus, if a $g\in C_G(\ud u_1+\ud u_3)$ fixes either $x_2$ or $x_3$,
then $g(\ud u_1)=\ud u_1$, and $g$ is a monomial matrix. Furthermore,
either $g=1$, or $g(\ud u_2)=\beta \ud u_4$ and 
$g(\ud u_4)=\gamma \ud u_2$ for some $\beta,\gamma\in \FF {p^a}^*$. 
However, it that case the order of $g_0g\in G$ is divisible
by three, a contradiction. So, in this case we get
$C_G(x_2)\cap C_G(y_2)=C_G(x_3)\cap C_G(y_3)=1$.

In the second case we claim that $C_G(x_2)\cap C_G(y_2)=1$. 
Let $g\in C_G(x_2)\cap C_G(y_2)$. If $g(\ud u_1-\ud u_3)=\beta (\ud
u_1-\ud u_3)$ for some $\beta\in \FF {p^a}^*$, then $g=1$ by the 
previous paragraph.
Otherwise, $g(\ud u_1-\ud u_3)=\gamma(\ud u_2-\ud u_4)$ holds for some
$\gamma\in\FF {p^a}^*$. 
Using that $\frac 12=-1$ in $\FF {3^k}$ we get
\begin{multline*}
g(\ud u_1+\ud u_2+\ud u_4)=\frac{1}{2}(g(\ud u_1+\ud u_3)+g(\ud
u_1-\ud u_3))+g(\ud u_2+\ud u_4)=\\
-(\ud u_1+\ud u_3)+ \gamma(\ud u_2-\ud u_4) 
+\alpha_g (\ud u_2+\ud u_4)\neq \ud u_1+\ud u_2+\ud u_4.
\end{multline*}
This contradiction shows that $C_G(x_2)\cap C_G(y_2)=1$.

Finally, in the third case the proof of $C_G(x_3)\cap C_G(y_3)=1$ 
is essentially the same as the proof was in the second case.
\end{proof}
\noindent\textbf{Remark}

\medskip\noindent
In the above example, if we start from the decomposition $F=AD_0'S'$,
where $D_0'=\left<d_2,s_2\right>$ and $S=\left<d_1,s_1\right>$, 
then the corresponding bases $\{\ud u_1',\ud u_2',\ud u_3',\ud u_4'\}$ 
suitable to Theorem \ref{thm:fitting_mon} will be the following
\[
\ud u_1'=\ud u_1+\ud u_3,\quad \ud u_2'=\ud u_1-\ud u_3,\quad
\ud u_3'=\ud u_2+\ud u_4,\quad \ud u_4'=\ud u_2-\ud u_4.
\]
Written in this new basis, the vectors $x_2,y_2,x_3,y_3$ have the
following form
\begin{gather*}
x_2=-\ud u_1'-\ud u_2'+\ud u_3',\qquad y_2=\ud u_1';\\
x_3=-\ud u_1'-\ud u_2'+\ud u_4',\qquad y_3=\ud u_1'.
\end{gather*}
Hence in case of $G\leq GL(4,3)$ we can assume that there exists a
 pair $x,y$ such that $C_G(x)\cap C_G(y)=1$, where
$y=\ud u_1$, and $x$ is the linear combination of exactly three basis
 vectors with non-zero coefficients.
\bigskip\\
In case of  $GL(4,5)$ we used the GAP system 
\cite{gap} to find suitable $x$ and $y$.
\begin{thm}
As before, let
$F=A\left<d_1,d_2,s_1,s_2\right>\leq GL(4,5)$, and let $N$ denote the
normalizer of $F$ in $GL(4,5)$. Then, for
$x=\ud u_1+\ud u_2+2\ud u_3,\ y=\ud u_2+\ud u_3+2\ud u_4$, we have
$C_N(x)\cap C_N(y)=1$.
\end{thm}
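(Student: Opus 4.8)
The plan is to note first that the statement, phrased for $N=N_{GL(4,5)}(F)$, in fact settles the whole case ``$e=4$, $F$ monomial, base field $\FF5$'': if $F\nor G\le GL(4,5)$ then $G\le N$, so $C_G(x)\cap C_G(y)\le C_N(x)\cap C_N(y)$. Since there is no structural shortcut here, the job is a single finite verification inside the one explicit group $N$, and it splits into: (i) identifying $N$ concretely; (ii) computing the point stabiliser $C_N(x)$; (iii) checking that no non-trivial element of $C_N(x)$ also fixes $y$.

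For (i): $A=Z(F)$ is the group of scalar matrices, of order $4$ (indeed $2^2=-1$ in $\FF5$), and $F=AP_1$ with $P_1$ extraspecial of order $2^5$, so $F$ is a $2$-group of order $64$. By part \textsl{7} of Theorem \ref{thm:maxprim}, $F$ acts irreducibly on $V\simeq\FF5^4$, whence Schur's Lemma gives $C_{GL(4,5)}(F)=\FF5^*\,I=A$. Consequently $N$ acts on $F$ by conjugation with kernel exactly $A$, so $N/F$ embeds into the outer automorphism group of $F$; computing the latter — it acts on $F/\langle -I\rangle\simeq\FF2^5$ preserving the commutator form and the squaring form $\bar f\mapsto 0\iff f^2=I$ — shows that $N$ is a small group with $(|N|,5)=1$. (This coprimality is automatic here, which is why the statement may be made for the full normaliser; contrast the $\FF3$-case of Theorem \ref{thm:mon_GL(4,3)}, where the analogous normaliser has order divisible by $3=p$ and one genuinely needs the hypothesis $(|G|,|V|)=1$.) In practice one just writes $N$ down from generators: $d_1,d_2,s_1,s_2$, the scalar $2I$, and a set of coset representatives for $N/F$ realising the relevant automorphisms of $F$.

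For (ii)--(iii): with $N$ in hand this is an orbit--stabiliser computation. Compute the $N$-orbit of $x=\ud u_1+\ud u_2+2\ud u_3$ and read off $C_N(x)$ — which one expects to be small, the coordinate pattern of $x$ being quite asymmetric — then list its elements and check that imposing $g(y)=y$, with $y=\ud u_2+\ud u_3+2\ud u_4$, leaves only $g=I$. This is exactly the GAP verification the text refers to. A by-hand route is possible but not shorter: use $g(x)=x$ to constrain the shape of $g$ — bearing in mind that a general element of $N$ is monomial only with respect to a basis depending on $g$, since $N$ shuffles around the extraspecial and maximal abelian subgroups sitting inside $F$, so one must carry both a monomial part and the extra (symplectic-type) part — and then use $g(y)=y$ to eliminate the residual parameters.

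\textbf{The main obstacle.} Nothing replaces this finite check: unlike the sub-cases of Theorem \ref{thm:mon_eneq2^k} and the earlier $GL(4,3)$ analysis, here $C_N(x)$ genuinely contains non-monomial matrices, so the problem cannot be reduced to a clean permutation-group statement. The real work is two-fold — producing a workable description of $N$ in step (i) (essentially an automorphism-group/normaliser computation for a group of order $64$), and organising the enumeration of $C_N(x)$ so that it terminates, which the authors carried out by computer. Heuristically the pair $(x,y)$ succeeds because the only ``non-monomial freedom'' in $N$ mixes coordinates in pairs, and the coefficient patterns of $x$ and $y$ are chosen asymmetrically enough that no such mixing can stabilise both at once; but confirming this cleanly still reduces to the computation.
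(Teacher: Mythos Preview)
Your proposal is correct and matches the paper's own treatment: the paper offers no written proof at all for this theorem, stating only that the pair $(x,y)$ was found and verified using GAP, and your plan (describe $N$ concretely, compute $C_N(x)$, then check which elements fix $y$) is precisely what such a verification amounts to. Your additional structural remarks --- that $|F|=64$, that $C_{GL(4,5)}(F)=A$ by Schur, that $N/F$ embeds in $\mathrm{Out}(F)$, and in particular that $(|N|,5)=1$ so the coprimality hypothesis is automatic here --- are correct and go beyond what the paper records, but the underlying approach is the same finite computation.
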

Finally, if the size of the base field is not equal to $3,5$ or $9$, then the
following theorem guarantees the existence of a good pair of $x$ nd $y$.
\begin{thm}\label{thm:mon_GL(4,7)}
As in the previous theorems let $F=A\left<d_1,d_2,s_1,s_2\right>
\nor G\leq GL(4,p^a)$, and assume that $p^a\neq 3,\,5,\,9$. 
Furthermore, let $\alpha\in \FF{p^a}$ be a generator of the
multiplicative group of $\FF{p^a}$. Set
$x=\ud u_2+\alpha \ud u_3+\alpha^{-1}\ud u_4,\ y=\ud u_1$.
Then $C_G(x)\cap C_G(y)=1$. 
\end{thm}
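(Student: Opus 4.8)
The goal is to take an arbitrary $g\in C_G(x)\cap C_G(y)$ and prove $g=1$. Since $g$ fixes $y=\ud u_1$, Lemma~\ref{thm:monlem} applies to $g$: it is a monomial matrix, it has a unique decomposition $g=\dg\pg$ into a diagonal matrix and a permutation matrix, the permutation $\pg$ fixes $\ud u_1$ (so it only permutes $\ud u_2,\ud u_3,\ud u_4$ among themselves, say by a permutation $\sigma$ of $\{2,3,4\}$), the first main-diagonal entry of $\dg$ is $1$, and $[\dg,S]\leq D$. I will write $\dg_{jj}$ for the $j$-th main-diagonal entry of $\dg$.

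The next step is to use the other relation $g(x)=x$ to pin $\dg$ down completely in terms of $\sigma$. Monomiality gives $g(\ud u_j)=\dg_{\sigma(j)\sigma(j)}\,\ud u_{\sigma(j)}$, so comparing the coefficients of $\ud u_2,\ud u_3,\ud u_4$ on the two sides of $g(x)=x$ forces $\dg_{jj}=c_j/c_{\sigma^{-1}(j)}$ for $j\in\{2,3,4\}$, where $(c_2,c_3,c_4)=(1,\alpha,\alpha^{-1})$ are the coefficients occurring in $x$. If $\sigma$ is the identity, then $\dg_{jj}=c_j/c_j=1$ for all $j$, so $\dg$ is the identity matrix and $g=\dg\pg=1$, which is what we want. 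Otherwise $\sigma$ moves some $j\in\{2,3,4\}$; since $1,\alpha,\alpha^{-1}$ are pairwise distinct (this uses $o(\alpha)=|A|=p^a-1\geq 6$), the quotient $\dg_{jj}=c_j/c_{\sigma^{-1}(j)}$ of two distinct elements of $\{1,\alpha,\alpha^{-1}\}$ lies in $\{\alpha,\alpha^{-1},\alpha^{2},\alpha^{-2}\}$.

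It remains to derive a contradiction in this second case. Because $S$ acts regularly on $\{\ud u_1,\ud u_2,\ud u_3,\ud u_4\}$, there is an element $s\in S$ whose permutation interchanges the coordinates of $\ud u_1$ and $\ud u_j$. Then $[\dg,s]$ is diagonal, and because $\dg_{11}=1$ its $1$st and $j$th diagonal entries are $\dg_{jj}$ and $\dg_{jj}^{-1}$ in some order. On the other hand $[\dg,s]\in D=A\times D_0$, and every element of $D$ has all of its main-diagonal entries equal in absolute value --- a scalar from $A$ times one of the four $\pm 1$-patterns generated by $d_1$ and $d_2$. Hence the ratio $\dg_{jj}/\dg_{jj}^{-1}=\dg_{jj}^{2}$ lies in $\{1,-1\}$, so $\dg_{jj}$ is a fourth root of unity in $\FF{p^a}$. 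But none of $\alpha,\alpha^{-1},\alpha^{2},\alpha^{-2}$ is a fourth root of unity: that would make $o(\alpha)=p^a-1$ a divisor of $8$, i.e.\ $p^a\in\{2,3,5,9\}$, and all of these are excluded by hypothesis ($p\neq 2$, together with $p^a\neq 3,5,9$). This contradiction shows $\sigma$ must be the identity, whence $g=1$.

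I expect the only genuinely delicate point to be this last arithmetic remark: the hypothesis $p^a\notin\{3,5,9\}$ is precisely what guarantees that the numbers $\alpha^{\pm 1},\alpha^{\pm 2}$ steer clear of the fourth roots of unity, with $p^a=9$ being the true borderline case (which is why it is treated separately, using the coprimality of $|G|$ and $|V|$). Everything else is bookkeeping. One could alternatively replace the uniform ratio computation above by going through the six permutations $\sigma$ of $\{2,3,4\}$ one at a time, computing $\dg$ and a convenient commutator $[\dg,s]$ in each; the argument above is meant to make that case analysis unnecessary.
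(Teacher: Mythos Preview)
Your proof is correct and follows essentially the same route as the paper: use Lemma~\ref{thm:monlem} to get monomiality of $g$, read off $\dg$ from $g(x)=x$, and derive a contradiction from $[\dg,s]\in D$. The only difference is cosmetic: the paper splits off the case $\sigma=(3\,4)$ (where the diagonal entries are $1,1,\alpha^{2},\alpha^{-2}$) and computes the commutator with $s_2$ explicitly, whereas your ratio argument $\dg_{jj}^{2}\in\{\pm 1\}$ handles all nontrivial $\sigma$ at once.
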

\begin{proof}
Let $g\in C_G(x)\cap C_G(y)$.
By the choice of $y$ we know that $g$ is a monomial matrix. 
The first element in the main diagonal of $\delta(g)$ is 1, and the others
are from the set $\{1,\alpha,\alpha^{-1}\alpha^2,\alpha^{-2}\}$. If
$\delta(g)$ contains an $\alpha$ or an $\alpha^{-1}$, then for some
$s\in S$ we get $[\delta(g),s]\in A\times D_0$ contains both $\alpha$
and $\alpha^{-1}$. By part \textsl{4} of Theorem
\ref{thm:fitting_mon}, this is impossible unless $o(\alpha^2)|4$,
which cannot hold by our assumption to $p^a$. 
It follows that either $g=1$, or
\[
g=
\begin{pmatrix}
1&0&0&0\\
0&1&0&0\\
0&0&0&\alpha^2\\
0&0&\alpha^{-2}&0
\end{pmatrix},\qquad\textrm{and}\qquad
[\delta(g),s_2]=
\begin{pmatrix}
\alpha^2&0&0&0\\
0&\alpha^{-2}&0&0\\
0&0&\alpha^{-2}&0\\
0&0&0&\alpha^2
\end{pmatrix}.
\]
It follows from $[\delta(g),s_2]\in D$ that $o(\alpha^4)|2$, which 
is again impossible, since $p^a\neq 3,\,5,\,9$.
\end{proof}
The constructions given in the last three theorems have the common
property that $x$ is a sum of exactly three basis vectors with
non-zero coefficient. Capitalizing this property, we shall give 
a uniform construction in any case of 
$F=AD_0S\nor G\leq GL(2^k,p^a)$ for all $k\geq 3$.
Possibly taken a permutation of the basis vectors
 $\ud u_1,\ud u_2,\ldots,\ud u_e$ we can assume that
 $\{\ud u_1,\ud u_2,\ud u_3,\ud u_4\}$ corresponds to a two
dimensional subspace of $S$, that is,
\[
\{\ud u_1,\ud u_2,\ud u_3,\ud u_4\}=S_2(\ud u_1)=\{s(\ud u_1)\,|\,s\in
S_2\}\quad \textrm{for some } S_2\leq S,\ |S_2|=4.
\]
Let $V'=\left<\ud u_1,\ud u_2,\ud u_3,\ud u_4\right>\leq
V$ the subspace generated by the first four basis vectors, and
$N_F(V')$ the subgroup of elements of $V'$ fixing $F$.
Now, $N_F(V')/C_F(V')$ is included into $GL(V')$ by restriction to
$V'$, so we get a subgroup $F'=A\left<d_1,d_2,s_1,s_2\right>\leq GL(V')$.
If $g\in N_G(V')$, then it is clear that $g_{V'}$ normalizes
$F'$-t. Using the previous results, we can define $x_0,y_0\in V'$
such that $x_0$ is the linear combination of exactly three basis
vectors and $N_G(V')\cap C_G(x_0)\cap C_G(y_0)$ acts trivially on $V'$.
Starting from the pair $x_0,y_0$, we sarch a good pair of vectors
$x,y\in V$ in the form $x=x_0,\ y=y_0+v$, where
$v\in V'':=\left<\ud u_5,\ud u_6,\ldots,\ud u_e\right>$. 
The following lemma answers the question why this form is good.
\begin{lem} $C_G(x_0)$ fixes both the $V'$ and the $V''$ subspaces,
that is, $C_G(x_0)\leq N_G(V')\cap N_G(V'')$.
As a result, for any $v\in V''$ we have 
$C_G(x_0)\cap C_G(y_0+v)=C_G(x_0)\cap C_G(y_0)\cap C_G(v)$ acts
trivialy on $V'$ In particular, $C_G(x_0)\cap C_G(y_0+v)$ consists of
monomial matrices.
\end{lem}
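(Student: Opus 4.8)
The plan is to identify $D^{*}:=C_{F}(x_{0})$ with a diagonal subgroup of $F$ which by itself recovers the splitting $V=V'\oplus V''$, and then to exploit $F\nor G$. First I would compute $C_{F}(x_{0})$. Every element of $F=DS$ is a monomial matrix whose permutation part lies in $S$; if $f\in C_{F}(x_{0})$ had a nontrivial permutation part $s\in S$, then $s$ would have to stabilise setwise the three-element support of $x_{0}$, which is contained in the orbit $\{\ud u_{1},\ud u_{2},\ud u_{3},\ud u_{4}\}=S_{2}(\ud u_{1})$. Since $S$, hence $S_{2}$, acts regularly on this orbit, any such $s$ already belongs to $S_{2}$, and a nontrivial element of the Klein four-group $S_{2}$ acts on these four points as a fixed-point-free involution, so it cannot fix a $3$-subset. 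Therefore $f\in D=A\times D_{0}$; writing $f=ad_{0}$ with $a\in A$ a scalar and $d_{0}\in D_{0}$ and comparing the coordinates of $f(x_{0})$ and $x_{0}$ on the support of $x_{0}$, the scalar $a$ is forced to be $1$ --- immediately if $\ud u_{1}$ lies in the support, and otherwise because the three characters of $D_{0}$ attached to $\ud u_{2},\ud u_{3},\ud u_{4}$ multiply to the trivial character while $D_{0}$ is an elementary abelian $2$-group (here $e=2^{k}$ is used), giving $a\in\{1,-1\}$ and $a^{3}=1$. Then $d_{0}$ must lie in $C_{D_{0}}(V')$, and conversely $C_{D_{0}}(V')$ fixes $x_{0}\in V'$. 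Hence $D^{*}=C_{F}(x_{0})=C_{D_{0}}(V')$.

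Next I would record two properties of $D^{*}$: it fixes $V'$ pointwise, and for every $i$ with $\ud u_{i}\notin V'$ it has no nonzero fixed vector in $\langle\ud u_{i}\rangle$. The second holds because, using part $3$ of Theorem \ref{thm:fitting_mon} and the fact $C_{S}(D_{0})=1$, the characters $\chi_{i}$ with $\ud u_{i}\in V'$ form exactly the subgroup of order $4$ of the character group of $D_{0}$ corresponding to $S_{2}$, so $\chi_{i}$ is nontrivial on $D^{*}$ whenever $\ud u_{i}\notin V'$. Since $D^{*}$ is diagonal, these two facts give $V'=C_{V}(D^{*})$ and $V''=[V,D^{*}]$. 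Because $F\nor G$, any $g\in C_{G}(x_{0})$ normalises $F$, and as it also fixes $x_{0}$ it normalises $C_{F}(x_{0})=D^{*}$; a subgroup normalising $D^{*}$ preserves both $C_{V}(D^{*})$ and $[V,D^{*}]$, so $C_{G}(x_{0})\leq N_{G}(V')\cap N_{G}(V'')$.

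For the consequences: since $V=V'\oplus V''$ and every $g\in C_{G}(x_{0})$ respects this direct sum, for $v\in V''$ we have $g(y_{0}+v)=g(y_{0})+g(v)$ with $g(y_{0})\in V'$ and $g(v)\in V''$, so $g$ fixes $y_{0}+v$ exactly when it fixes $y_{0}$ and $v$ separately; this is the displayed equality $C_{G}(x_{0})\cap C_{G}(y_{0}+v)=C_{G}(x_{0})\cap C_{G}(y_{0})\cap C_{G}(v)$. As $C_{G}(x_{0})\leq N_{G}(V')$, the group $C_{G}(x_{0})\cap C_{G}(y_{0})$ equals $N_{G}(V')\cap C_{G}(x_{0})\cap C_{G}(y_{0})$, which acts trivially on $V'$ by the choice of $x_{0},y_{0}$; hence so does the smaller group $C_{G}(x_{0})\cap C_{G}(y_{0})\cap C_{G}(v)$, and in particular it fixes $\ud u_{1}\in V'$, so by part $1$ of Lemma \ref{thm:monlem} each of its elements is a monomial matrix.

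The step requiring the most care is the determination of $C_{F}(x_{0})$ and the verification that $D^{*}$ fixes no nonzero vector outside $V'$: this is exactly where the assumption that $e$ is a power of $2$, the regularity of the $S$-action, and the explicit description of the characters $\chi_{i}$ enter, and one must keep track of the scalar subgroup $A$. The remaining steps are formal.
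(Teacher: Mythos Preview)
Your proposal is correct and follows essentially the same route as the paper: compute $C_F(x_0)$, show it is the diagonal subgroup $C_{D_0}(V')$ (the paper writes it as $C_{D_0}(S_2)$, which is the same thing), observe that $V'$ is its fixed space and $V''$ the sum of the remaining $D^*$-isotypic components, and use $C_F(x_0)\nor C_G(x_0)$ to conclude. Your treatment of the scalar $a\in A$ is in fact more careful than the paper's, which passes over that point rather quickly; and your formulation $V'=C_V(D^*)$, $V''=[V,D^*]$ is a slightly cleaner way to see that both subspaces are preserved by the normaliser of $D^*$ than the paper's ``permutes the homogeneous components and $x_0\in V'$'' argument, but the underlying idea is identical.
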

\begin{proof}
It is enough to prove the inclusion $C_G(x_0)\leq N_G(V')\cap
N_G(V'')$, the rest of the statement follows evidently.
Our proof is similar to how we have proved that $C_G(\ud u_1)$
consists of monomial matrices. As there occurs three basis element in 
$x_0$ and $S\simeq Z_2^{e_1}$ permutes regularly the basis element
we get $C_F(x_0)\leq AD_0$, i.e., every element of $C_F(x_0)$ is diagonal.
Hence every element of $C_F(x_0)$ fixes the three basis element
appearing in $x_0$. Using the assumption that
$\ud u_1,\ud u_2,\ud u_3,\ud u_4$ corresponds to the subspace $S_2\leq S$,
it follows easily that any element of $D_0$ fixing three of the basis
elements $\ud u_1,\ud u_2,\ud u_3,\ud u_4$ must fix the fourth one,
too. From our choice $D_0=C_D(\ud u_1)$
(see proof of Theorem \ref{thm:fitting_mon}) $N:=C_F(x_0)=C_{D_0}(S_2)$. 
It is clear from this that $C_{D_0}(S_2)$ is a two codimensional
subspace of $D_0$-nak, so
$V'$ is just $N$ the homogeneous component of $N$ corresponding to the
trivial representation, while 
$V''$ is the sum of all of the other homogeneous component of $N$. 
(These homogeneous components corresponds to cosets of $S_2$ in
$S$.) As $N\nor C_G(x_0)$, we get every element of $C_G(x_0)$ permutes
the homogneous components of $N$. Since $x_0\in V'$, we get $C_G(x_0)$
fixes $V'$, so it also fixes the sum of the other components, which is $V''$.
\end{proof}
It is time to define the vector $v$, whereby we close the monomial
case. We already know from the previous
lemma that $C_G(x_0)\cap C_G(y_0+v)$ consists of monomial matrices 
for any $v\in V''$, so we can use the constructions given in Theorem
\ref{thm:affin_perm} to define a $\pi(C_G(x_0)\cap
C_G(y_0))$-regular partition on the space
$W=\{\ud u_1,\ud u_2,\ldots,\ud u_e\}$.
\begin{thm}\label{thm:mon_GL(2^k)}
By part  \ref{thm:affin_perm} of Theorem \textsl{5-6} let 
$W=\Omega_1\cup\Omega_2\cup\ldots\cup\Omega_5$ be a $\pi(C_G(x_0)\cap
C_G(y_0))$-regular partition of $W=\{\ud u_1,\ud u_2,\ldots,\ud
u_e\}$ such that $\Omega_1=\{\ud u_1\}\cup\Omega_2\cup\Omega_3\leq 
\{\ud u_1,\ud u_2,\ud u_3,\ud u_4\}$.
(We can achieve this by choosing a suitable $S$.)
Let the vectors $x,y\in V$ be defined as follows
\[
x=x_0,\ y=y_0+v,\textrm{\qquad where\ \ } v=0\cdot\sum_{\ud
u_i\in\Omega_4}\ud u_i+
1\cdot\sum_{\ud u_i\in\Omega_5}\ud u_i, \textrm{\qquad for\ \ }e\neq 16.
\]
In case of $e=16$ this construction is not effective (since it was an 
exceptional case in Corollary \ref{kov:affin_perm}). In this case let
$\ud u_s,\ud u_t\in \{\ud u_5,\ud u_6\ldots,\ud u_{16}\}$ 
be two vectors corresponding to elements from different cosets of 
$S_2$ in $S$.
In this case let $x,y\in V$ be chosen as
\[
x=x_0,\qquad y=y_0+0\cdot \ud u_s+(-1)\cdot \ud u_t+1\cdot
\sum_{\twolineindex{i\in\{5,6,\ldots,16\}}{i\neq s,t}}\ud u_i.
\]
The we have $C_G(x)\cap C_G(y)=1$.
\end{thm}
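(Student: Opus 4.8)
The plan is to take an arbitrary $g\in C_G(x)\cap C_G(y)$ and prove $g=1$, working throughout with the monomial structure supplied by Lemma \ref{thm:monlem} and the preceding lemma. Since $x=x_0$, that lemma tells us $g$ normalizes both $V'$ and $V''$, acts trivially on $V'$, and is a monomial matrix; write $g=\dg\pg$. Because $g|_{V'}=\mathrm{id}$, the permutation $\pg$ fixes $\ud u_1,\ud u_2,\ud u_3,\ud u_4$ pointwise and the first four diagonal entries of $\dg$ equal $1$; in particular $g(\ud u_1)=\ud u_1$, so $\pg$ lies in the group $\pi(C_G(x_0)\cap C_G(y_0))$ for which $W=\Omega_1\cup\cdots\cup\Omega_5$ was chosen to be a regular partition (recall $\Omega_1=\{\ud u_1\}$ and $\Omega_1\cup\Omega_2\cup\Omega_3\sbs\{\ud u_1,\ud u_2,\ud u_3,\ud u_4\}$). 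Finally, since $g$ fixes $y_0$ and is linear, $g$ also fixes $v:=y-y_0$.

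For $e\neq 16$ the rest is short. The monomial matrix $g$ fixes $v=\sum_{\ud u_i\in\Omega_5}\ud u_i$, so $\pg$ preserves $\Omega_5$ and its complement; since $\pg$ fixes $\ud u_1,\ldots,\ud u_4$ pointwise, $\Omega_1\cup\Omega_2\cup\Omega_3\sbs\{\ud u_1,\ldots,\ud u_4\}$, and the one vector of $\{\ud u_1,\ldots,\ud u_4\}$ not in $\Omega_1\cup\Omega_2\cup\Omega_3$ lies in $\Omega_5$ (so $\Omega_4$ is disjoint from $\{\ud u_1,\ldots,\ud u_4\}$), the map $\pg$ preserves each of $\Omega_1,\ldots,\Omega_5$, whence $\pg=1$ by regularity of the partition. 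Now $g=\dg$ is diagonal, and $g(v)=v$ together with $g|_{V'}=\mathrm{id}$ forces $g_{ii}=1$ for every $\ud u_i\notin\Omega_4$; hence $\dg$ has at most $|\Omega_4|$ diagonal entries different from $1$. By part \textsl{2} of Corollary \ref{kov:affin_perm} (the estimates in Cases \textsl{5} and \textsl{6}), $|\Omega_4|<\frac14 e$ as soon as $e=8$ or $e\geq 32$, so more than $\frac34 e$ of the diagonal entries of $\dg$ equal $1$, and part \textsl{4} of Lemma \ref{thm:monlem} gives $\dg=1$, i.e.\ $g=1$.

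The case $e=16$ is where the argument has real content, since there $|\Omega_4|=5$ and the estimate just used fails; this is exactly why $y$ is modified so as to carry two singleton coordinate values $0$ (at $\ud u_s$) and $-1$ (at $\ud u_t$), with $\ud u_s,\ud u_t$ from two distinct nontrivial cosets of $S_2$ in $S$. Here $v=y-y_0\in V''$ has coefficient $0$ only at $\ud u_s$, coefficient $-1$ only at $\ud u_t$, and coefficient $1$ at the remaining ten vectors of $\{\ud u_5,\ldots,\ud u_{16}\}$. From $g$ monomial, $g|_{V'}=\mathrm{id}$ and $g(v)=v$ one first reads off $\pg(\ud u_s)=\ud u_s$ (uniqueness of the coefficient $0$), and then a dichotomy for the $\pg$-cycle through $\ud u_t$: either $\pg$ fixes $\ud u_t$, or that cycle forces exactly two diagonal entries of $\dg$ to equal $-1$, all others being $1$. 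In the second alternative $\dg$ has at most three entries different from $1$, so $\dg=1$ by part \textsl{4} of Lemma \ref{thm:monlem} --- contradicting the presence of a $-1$. Hence $\pg$ fixes both $\ud u_s$ and $\ud u_t$; since these lie in distinct nontrivial cosets of $S_2$, the set $\{\ud u_1,\ldots,\ud u_4,\ud u_s,\ud u_t\}$ spans $S\cong\FF 2^4$, so the $\FF 2$-linear map $\pg$ is the identity on $S$, i.e.\ $\pg=1$; then $\dg$ has at most one entry ($g_{ss}$) different from $1$, so $\dg=1$ and $g=1$. The only genuine obstacle is this exceptional dimension: the ``large part'' estimate of Corollary \ref{kov:affin_perm} degenerates precisely at $|W|=16$, so there one cannot conclude by counting and must instead exploit the cycle structure of $\pg$ together with the coset geometry of $S_2$ in $S$, which is exactly what the two forced coordinate values $0$ and $-1$ are designed to make possible.
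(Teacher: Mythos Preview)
Your proof is correct and follows essentially the same route as the paper's: for $e\neq 16$ you use the previous lemma to get $g$ monomial and trivial on $V'$, then read off from $g(v)=v$ that $\pg$ stabilises each $\Omega_i$, apply regularity of the partition to get $\pg=1$, and finish with the bound $|\Omega_4|<\tfrac14 e$ and part~\textsl{4} of Lemma~\ref{thm:monlem}; for $e=16$ you argue exactly as the paper does, first forcing $\pg(\ud u_s)=\ud u_s$, then ruling out $\pg(\ud u_t)\neq\ud u_t$ via the ``at most three non-$1$ diagonal entries'' count, and finally using that $S_2$ together with elements of two distinct nontrivial $S_2$-cosets generates $S$. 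Your write-up is in fact slightly more explicit than the paper's in two places (the observation that the fourth vector of $\{\ud u_1,\dots,\ud u_4\}$ lands in $\Omega_5$, and the cycle-analysis giving exactly two $-1$'s in $\dg$), but the underlying argument is the same.
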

\begin{proof}
We know by the previous lemma that any $g\in C_G(x)\cap C_G(y)$ is
a monomial matrix fixing all the vectors 
$\ud u_1,\ud u_2,\ud u_3,\ud u_4$, so it fixes all of the sets
$\Omega_1,\Omega_2,\Omega_3$. In case $e\neq 16$ even $\Omega_4$ is
fixed by $\pi(g)$, since exactly the element from $\Omega_4$ are
colored by $0$. It follows that $\pi(g)=1$. Hence $g=\dg$ is a
diagonal matrix, and any element in its main diagonal not
corresponding to $\Omega_4$ must be $1$. However,
$|\Omega_4|<1/4|W|$ by Corollary \ref{kov:affin_perm}, so we get
$g=\dg=1$ by part \textsl{4} of Lemma
\ref{thm:monlem}.

In case $e=16$ for the permutation part of any element 
$g\in C_G(x)\cap C_G(y)$ we have
$\pi(g)(\ud u_s)=\ud u_s$. Now, if $\pi(g)(\ud u_t)\neq\ud
u_t$ does not hold, then the number of elements in the diagonal of
$\dg$ different from $1$ should be $2$ or $3$, which is again
a contradiction to part \textsl{4} of Lemma  
\ref{thm:monlem}.
Hence $\dg=1$ and $\pi(g)(\ud u_t)=\ud u_t$. By choice of the vectors 
$\ud u_s,\ud u_t$ we get $g=\pi(g)=1$, which proves the identity 
$C_G(x)\cap C_G(y)=1$.
\end{proof}
\subsection{Finding $x,y\in V$ in case $F$ is not monomial}
Now, we handle the case when $F$ is not monomial. Thus, the extraspecial
2-group, say $P_1$, in the decomposition of $F\nor G\leq
GL(V)\simeq GL(e,p^a)$ corresponding to part \textsl{4} of Theorem
 \ref{thm:maxprim} is the central product of a quaternion group $Q$
by some (maybe $0$) dieder groups $D_4$. 
If $\lambda\in A$ is a field element of order four, and 
$Q=\left<i,j\right>\leq P_1$ is the quaternion group geerated by the
element $i,j$ of order four, then defining
$H=\left<\lambda i,\lambda j\right>\leq AQ$ we get $H\simeq D_4$
and $AH=AQ$. These means that in the decomposition of $F$ 
we can exchange $Q$ for a subgroup isomorphic to $D_4$, so we get the
monomial case. Therefore, we can assume that $A$ does not contain a
fourth root of unity.
Our next theorem is analogous to Theorem \ref{thm:fitting_mon}. 
\begin{thm}\label{thm:fitting_nonmon}
  With the above assumptions, the subgroup $F\leq GL(V)$ has the
  following properties
  \begin{enumerate}
  \item There exists a (not necessirily direct) 
   product decomposition $F=QF_1$ such that 
   $F_1=C_F(Q)=D\rtimes S=(A\times D_0)\rtimes S$ 
   and
    \[
	D_0\simeq S\simeq Z_{2}^{e_1-1}\times
	Z_{p_2}^{e_2}\times\ldots\times Z_{p_k}^{e_k}.
    \]
  \item There is a basis $\ud u_1,\ud v_1,\ud u_2,\ud v_2,
    \ldots, \ud u_{e/2},\ud v_{e/2}\in V$ such that written in this basis
    the elements of $D$ are diagonal matrices, while $S$ permutes the set
    of ordered pairs $\{(\ud u_i,\ud v_i)\,|\,1\leq i\leq e/2\}$ regularly.
  \item The subspaces $\left<\ud u_i\right>$ are all the irreducible
  representations of $D_0$ over $\FF {p^a}$ and they are pairwise
  non-equivalent.
  \item For any $g\in D_0$, the main diagonal of $g$ contains all of
  the $o(g)$-th root of unity with the same multiplicity.
  \item For all $1\leq i\leq e/2$ any element of $D$ restricting to
    $W_i=\left<\ud u_i,\ud v_i\right>$ is a scalar matrix. 
  \item If an element $g\in QD$ has an eigenvalue (in this
  representation), then $g\in D$.
  \end{enumerate}
\end{thm}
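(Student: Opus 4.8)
The plan is to adapt, almost line by line, the argument used for the monomial case in the proof of Theorem \ref{thm:fitting_mon}, but now keeping the quaternion factor $Q$ separate. First I would set up the generators of the extraspecial groups: for $i\geq 2$ (and for the ``dieder part'' of $P_1$) choose generators $x_{i,l},y_{i,l},z_i$ exactly as before, while for the quaternion factor of $P_1$ I single out $Q=\langle i,j\rangle$ of order $8$ and let the remaining $e_1-1$ copies of $D_4$ contribute generators $x_{1,l},y_{1,l}$ for $2\leq l\leq e_1$. Putting $D=A\times D_1\times\cdots\times D_k$ with $D_1=\langle x_{1,2},\ldots,x_{1,e_1}\rangle$ and $S=S_1\times\cdots\times S_k$ with $S_1=\langle y_{1,2},\ldots,y_{1,e_1}\rangle$, I obtain $F_1:=DS=C_F(Q)$ and $F=QF_1$, which is part \textsl{1} (the product need not be direct because $Q\cap A=Z(Q)\neq 1$). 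Since by our reduction $A$ has no fourth root of unity and the exponent of each $D_i$, $S_i$ divides $p_i$, every irreducible $\FF{p^a}$-representation of $D$ is one-dimensional, so as in the monomial case I can fix $\ud u_1$ spanning a $D$-invariant line and set $D_0=C_D(\ud u_1)=C_F(\ud u_1\oplus\ud v_1)$, where $\ud v_1:=i(\ud u_1)$; then $D=A\times D_0$ and $D_0\simeq S$ of the stated shape.

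Next I would build the basis. The space $W_1:=\langle\ud u_1,\ud v_1\rangle=\langle\ud u_1, i(\ud u_1)\rangle$ is $Q$-invariant and is the (unique up to the action of $F$) irreducible $Q$-submodule; it is $D_0$-homogeneous since $D_0$ centralizes $Q$, and on $W_1$ every element of $D$ acts as a scalar (this is part \textsl{5}). Now let $S$ act: the sets $s(W_1)$ for $s\in S$ are $QD_0$-invariant, mutually inequivalent (because $C_S(D_0)=1$, by the same commutator computation $d_{jj}\ud u_j=d(\ud u_j)=ds(\ud u_i)=[d,s](d_{ii}\ud u_j)$ as in Theorem \ref{thm:fitting_mon}), and there are $|S|=e/2$ of them, so their sum is an $F$-invariant subspace, hence all of $V$ by part \textsl{7} of Theorem \ref{thm:maxprim}. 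Choosing $\ud u_i=s_i(\ud u_1)$, $\ud v_i=s_i(\ud v_1)$ for a transversal $\{s_i\}$ of $S$ gives the basis of part \textsl{2}, in which $D$ is diagonal and $S$ permutes the ordered pairs $(\ud u_i,\ud v_i)$ regularly. Parts \textsl{3} and \textsl{4} then follow exactly as in Theorem \ref{thm:fitting_mon}: the $\langle\ud u_i\rangle$ exhaust the $e/2=|D_0|$ distinct linear characters of $D_0$, and the last statement is the general fact about linear characters of a finite abelian group over a field of coprime characteristic containing enough roots of unity.

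The one genuinely new point is part \textsl{6}, and this is where I expect the main work. Take $g\in QD$ and suppose $g$ has an eigenvalue in the given representation. Since $D$ acts diagonally on each block $W_i$ as scalars and $Q$ acts on $W_1$ (hence on each $W_i=s_i(W_1)$) through its unique faithful $2$-dimensional representation, the restriction $g|_{W_i}$ is, up to the scalar coming from $D$, either scalar (when the $Q$-part of $g$ lands in $Z(Q)$) or conjugate to the standard $2$-dimensional quaternion action, whose elements of order $4$ have characteristic polynomial $X^2+1$ with no root in $\FF{p^a}$ (here we use that $A$, hence $\FF{p^a}^*$, contains no element of order $4$, i.e.\ $-1$ is a non-square / $X^2+1$ is irreducible over $\FF{p^a}$). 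So if the $Q$-part of $g$ is not central, $g|_{W_i}$ has no eigenvalue on any block, hence $g$ has no eigenvalue on $V$; contrapositively, an eigenvalue forces the $Q$-part of $g$ into $Z(Q)\leq A$, i.e.\ $g\in AD=D$. I would write this out carefully, noting that $p\neq 2$ guarantees $\FF{p^a}$ contains a primitive $4$th root of unity iff $4\mid p^a-1$, which is exactly what our reduction excludes. The remaining parts are routine transcriptions of the monomial proof, so the only real obstacle is making the block-by-block eigenvalue analysis for part \textsl{6} precise, in particular checking that the $D$-scalar twisting a $2$-dimensional quaternion block cannot create an eigenvalue.
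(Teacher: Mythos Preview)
Your proposal is correct and follows the same overall strategy as the paper: split off the quaternion factor $Q$, apply Theorem \ref{thm:fitting_mon} to $F_1=C_F(Q)=ATP_2\cdots P_k$, and use that $Q$ centralises $F_1$ to extend the resulting basis of an irreducible $F_1$-submodule to all of $V$. Your organisation is slightly dual to the paper's (you build $W_1=\langle \ud u_1,i(\ud u_1)\rangle$ first and transport by $S$; the paper first fixes an irreducible $F_1$-submodule $V_1$, obtains all the $\ud u_i$ at once from the monomial theorem, then sets $W_i=\langle q(\ud u_i)\mid q\in Q\rangle$ and reads off $\dim W_i=2$ by a global dimension count), but these are equivalent constructions.

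For part \textsl{6}, which you correctly identify as the new content, the paper's argument is shorter than your block-by-block characteristic-polynomial analysis and sidesteps entirely your worry about the $D$-scalar twist. Write $g=qd$ with $q\in Q\setminus\{\pm I\}$ and $d\in D$. Since $Q$ commutes with $D$ and $4\nmid\exp(D)$ (this is exactly where the assumption that $A$ contains no fourth root of unity enters), one gets $4\mid o(g)$ and $g^{o(g)/2}=-I$, the unique involution of $Q$. Thus any eigenvalue $\lambda\in\FF{p^a}$ of $g$ would satisfy $\lambda^{o(g)/2}=-1$, so $\lambda^{o(g)/4}\in\FF{p^a}$ would be a primitive fourth root of unity, a contradiction. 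Your argument via the irreducibility of $X^2+\alpha^2$ over $\FF{p^a}$ is also correct, but the paper's order argument is coordinate-free and needs no case check on the scalar.
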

\begin{proof}
If $P_1=QT$ is the central product of the quaternion group $Q$ 
and the extraspecial $2$-group $T$ (which is itself a central product
of some $D_4$'-s), then we can apply Theorem \ref{thm:fitting_mon} to
the group $F_1=ATP_2P_3\ldots P_k$. Hence the first statement follows
at once from part \textsl{1} of Theorem \ref{thm:fitting_mon}.

Let $V_1\leq V$ be an irreducible $F_1$-invariant subspace of $V$. 
By Theorem \ref{thm:fitting_mon} the dimension of
$V_1$ is $e/2$, firthermore, there exists a basis 
$\{\ud u_1,\ud u_2,\ldots \ud u_{e/2}\}\in V_1$ of
$V_1$ such that $D$ consists of diagonal matrices respect to this
basis, while $S$ permutes regularly the elements of this basis. 
Now, statement \textsl{3} is just the redefinition of the
corresponding part of Theorem \ref{thm:fitting_mon}.

Let $W_i=\left<q(\ud u_i)\,|\,q\in Q\right>$ be the smallest
$Q$-invariant subspace containing $\ud u_i$. 
Then each $W_i$ is a homogeneous $D_0$-module, 
since $Q$ centralizes $D_0$, so \textsl{5}
follows. Additionally, these subspaces are pairwise non-equivalent 
$D_0$-modules. 

Since $Q$ centralizes also $S$, we get $S$ permutes regularly the
subspaces $W_i$.
It follows that $W_1\oplus W_2\oplus\ldots\oplus W_{e/2}$ is an
$F$-invariant subspace, so it is equal to $V$ by part \textsl{7} of
Theorem \ref{thm:maxprim}. Comparing dimensions we get each $W_i$ is
two dimensional.
Let us choose elements $\ud v_i\in W_i$ such that $\ud u_i,\ud v_i$ is
a basis of $W_i$ for all $i$, and
the set of vectors $\{\ud v_1,\ud v_2,\ldots,\ud v_{e/2}\}$
is an orbit of $S$. Now, \textsl{2} follows obviously.

Using the corresponding part of the monomial case it
follows \textsl{4} at once.

Finally, let $g=qd\in QD\setminus D$, so $q\in Q\setminus\{\pm I\}$.
As the elements of $Q$ are commutable with the elements of 
$D$ and the exponent of $D$ is not divisible by $4$
(Here we use that $A$ does not contain a fourth root of unity), 
we get the order of $g$ is divisible by four. 
It follows that $g^{o(g)/2}$ is an element of $Q$ of order two, hence 
$g^{o(g)/2}=-I$. Now, if $\lambda$ is an eigenvalue of
$g$, then $\lambda^{o(g)/4}\in \FF {p^a}$ would be a fourth root of unity,
a contradiction. Hence any element of $QD\setminus D$ does not have an
eigenvalue, which proves \textsl{6}.
\end{proof}
\noindent According to the last theorem let $V_1=\left<\ud u_1,\ud
u_2,\ldots,\ud u_e\right>$ and $V_2=\left<\ud v_1,\ud
v_2,\ldots,\ud v_e\right>$. Then $V=V_1\oplus V_2$.
Let $N_G(V_1)$ denote the elements of $G$ fixing the subspace
$V_1$. Then the restriction of $G_1=N_G(V_1)/C_G(V_1)$ to $V_1$
gives us an inclusion $G_1\leq GL(V_1)$. It is clear that $G_1$ 
contains the restriction of $F_1$ to $V_1$ as a normal subgroup.
Using the constructions of the monomial case,
we can find vectors $x_1,y_1\in V_1$ such that 
$C_{G_1}(x_1)\cap C_{G_1}(y_1)=1_{V_1}$.
Furthermore, in cases $e/2\neq 2^t$ and $e/2=2$ we have $x_1=\ud u_1$
by Theorem \ref{thm:mon_eneq2^k}, while in cases
$e/2=2^t,\ t\geq 2$ we found $x_1\in\left<\ud u_1,\ud u_2,\ud u_3,\ud
u_4\right>$ as a linear combination of exactly three basis vectors, 
while $y\in \ud u_1+\langle\ud u_5,\ud u_6,\ldots,\ud u_{e/2}\rangle$.
(Theorems \ref{thm:mon_GL(4,3)}-\ref{thm:mon_GL(4,7)}, \ref{thm:mon_GL(2^k)}, 
 and Remark after Theorem \ref{thm:mon_GL(4,3)})
Starting from these constructions we define vectors $x,y\in V$ as follows.
\begin{thm}
Using the vectors $x_1,y_1\in V_1$ defined above let
\[
\begin{array}{lll}
x=x_1,	      &y=\ud v_1+y_1,&\textrm{ in cases $e/2\neq 2^k$ or $e/2=2$;}\\ 
x=\ud v_1+x_1,&y=y_1,        &\textrm{ in cases $e/2=2^k,\ k\geq 2$.}
\end{array}
\]
Then $C_G(x)\cap C_G(y)=1$.
\end{thm}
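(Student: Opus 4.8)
\emph{Plan of proof.} Fix the basis $\ud u_1,\ud v_1,\ldots,\ud u_{e/2},\ud v_{e/2}$ and the decompositions $V=V_1\oplus V_2=\bigoplus_{i=1}^{e/2}W_i$, where $W_i=\langle\ud u_i,\ud v_i\rangle$, supplied by Theorem~\ref{thm:fitting_nonmon}; recall $V_1$ and $V_2$ are isomorphic irreducible $F_1$-modules and $D_0$ acts on each $W_i$ by a scalar, these scalars being pairwise distinct. Let $g\in C_G(x)\cap C_G(y)$; the goal is $g=1$, and I would imitate the monomial case in three stages.

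\emph{Stage 1 (block--monomial structure).} First I would show that $g$ permutes the subspaces $W_1,\ldots,W_{e/2}$, these being exactly the homogeneous components of $V$ as a $D_0$-module by parts \textsl{3} and \textsl{5} of Theorem~\ref{thm:fitting_nonmon}. In the cases $e/2\neq 2^k$ or $e/2=2$ we have $x=\ud u_1$; using part \textsl{6} of Theorem~\ref{thm:fitting_nonmon} (no element of $QD\setminus D$ has an eigenvector) together with the regular action of $S$ one checks that $C_F(\ud u_1)=D_0$, and since $F\nor G$ and $g(\ud u_1)=\ud u_1$, $g$ normalizes $D_0$, hence permutes the $W_i$ and fixes $W_1$. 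In the cases $e/2=2^k,\ k\ge 2$, the same computation applied to $C_F(\ud v_1+x_1)$ shows $g$ stabilizes $V'=W_1\oplus\cdots\oplus W_4$ and permutes $\{W_i:i\ge 5\}$; as $g$ also fixes $\ud u_1$ (the $V'$-component of $y_1=\ud u_1+v$), it normalizes $D_0=C_F(\ud u_1)$ and therefore permutes all the $W_i$.

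\emph{Stage 2 (triviality on $V_1$).} Writing $g$ as a block--permutation matrix relative to $V=\bigoplus W_i$, with induced permutation $\tau$ of $\{W_i\}$, I would compare the $W_i$-components of $g(y)=y$ (resp. of $g(x)=x$ and $g(y)=y$): on the blocks where $\ud u_i$ occurs in $y_1$ (or $x_1$) with non--zero coefficient this forces $g(\ud u_i)\in\langle\ud u_{\tau(i)}\rangle\sbs V_1$ and constrains $\tau$ to respect the partition of $\{\ud u_1,\ldots,\ud u_{e/2}\}$ built in Theorems~\ref{thm:affin_perm}--\ref{thm:kevert}; combining this with the $\pi(C_{G_1}(x_1))$-regularity of that partition and the counting bound of part \textsl{4} of Lemma~\ref{thm:monlem} on the zero--coefficient blocks, one concludes that $g$ stabilizes $V_1$, hence also $V_2$. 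Then $g$ induces $\bar g\in G_1=N_G(V_1)/C_G(V_1)\le GL(V_1)$ fixing $x_1$ and $y_1$; since $x_1,y_1$ were chosen in the monomial case so that $C_{G_1}(x_1)\cap C_{G_1}(y_1)=1_{V_1}$, we get $\bar g=1$, i.e. $g$ acts trivially on $V_1$, and then $g(y)=y$ (resp. $g(x)=x$) gives $g(\ud v_1)=\ud v_1$ as well.

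\emph{Stage 3 (triviality on $V_2$) and the main obstacle.} Since $g$ stabilizes $V_1$ and acts trivially on it, and since $g$ normalizes $F\cap N_{GL(V)}(V_1)$, which one checks equals $F_1$, and $F_1$ acts faithfully on $V_1$, the element $g$ centralizes $F_1$. But $V\!\mid_{F_1}$ is homogeneous — $\ud u_i\mapsto\ud v_i$ is an $F_1$-isomorphism $V_1\to V_2$ by part \textsl{5} of Theorem~\ref{thm:fitting_nonmon} — with $\operatorname{End}_{F_1}(V_1)=\FF{p^a}$, so $C_G(F_1)$ acts on $V$ as $\mathrm{id}_{V_1}\otimes GL_2(\FF{p^a})$ on the two--dimensional multiplicity space; as $g$ fixes $\ud u_1$ and $\ud v_1$, which span that space, $g=1$. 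The real work is Stage~2, namely passing from ``$g$ permutes the $W_i$'' to ``$g$ stabilizes $V_1$'': on the $W_i$ carrying a non--zero coefficient of $y_1$ (or $x_1$) this is immediate, but on the zero--coefficient blocks one must rule out $g$ tilting a line $\langle\ud u_i\rangle$ into $V_2$, which is exactly where the regularity of the permutation partition, the extra vector $\ud v_1$, and the $\tfrac34 e$-bound of Lemma~\ref{thm:monlem} must be used in concert; the low--dimensional exceptions are inherited through the same reduction from Theorems~\ref{thm:mon_GL(4,3)}--\ref{thm:mon_GL(4,7)} and \ref{thm:mon_GL(2^k)}.
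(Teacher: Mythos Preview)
Your Stages~1 and~3 are fine; in fact your Stage~3 argument via $C_{GL(V)}(F_1)\cong GL_2(\FF{p^a})$ is a clean alternative to the paper's endgame. The genuine gap is Stage~2, precisely at the point you yourself flag as ``the main obstacle''.

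On the blocks $W_i$ with $c_i\neq 0$ you correctly get $g(\ud u_i)\in\langle\ud u_{\tau(i)}\rangle$, but on the zero--coefficient blocks you propose to finish with ``the regularity of the partition together with the $\tfrac34 e$--bound of Lemma~\ref{thm:monlem}''. That lemma, however, is a statement about \emph{diagonal} matrices $\delta(g)$ in the monomial setting; there is no version of it for $2\times 2$--block diagonal matrices, and none is proved anywhere in the paper or in your outline. Moreover, at this stage $\tau$ is only known to respect the zero/non--zero dichotomy, not the full colouring, so the $G_1$--regularity of the partition cannot yet be invoked either. As written, nothing prevents a zero--coefficient block of $g$ from sending $\ud u_i$ to a genuine combination $\alpha\ud u_{\tau(i)}+\beta\ud v_{\tau(i)}$.

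The paper closes this gap with a different mechanism. One writes $g=\delta_2(g)\pi_2(g)$ with $\delta_2(g)$ $2$--block diagonal and $\pi_2(g)=\pi(g)\otimes I_2$, and proves (exactly as in Lemma~\ref{thm:monlem}\textsl{.3}) that $\delta_2(g)$ normalises $F$, so $[\delta_2(g),s]\in QD$ for every $s\in S$. The first block of $\delta_2(g)$ is the identity. For each $i$ with $c_i\neq 0$, the $i$-th block of $\delta_2(g)$ has $\ud u_i$ as eigenvector; choosing $s\in S$ with $s(\ud u_1)=\ud u_i$ makes the first block of $[\delta_2(g),s]$ equal to that $i$-th block, so $[\delta_2(g),s]\in QD$ has an eigenvalue and hence, by part~\textsl{6} of Theorem~\ref{thm:fitting_nonmon}, lies in $D$; thus that $i$-th block is \emph{scalar}. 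One then checks from the explicit constructions that \emph{more than half} of the $\ud u_i$ carry a non--zero coefficient, so for \emph{every} $s\in S$ at least one block of $[\delta_2(g),s]$ is scalar, whence again $[\delta_2(g),s]\in D$; since the first block is $I$ and $S$ permutes the blocks regularly, all blocks of $\delta_2(g)$ are scalar. Only now is $g$ monomial and $V_1$--stable, and the rest follows. The essential ingredient you are missing is this repeated use of part~\textsl{6} of Theorem~\ref{thm:fitting_nonmon} on commutators $[\delta_2(g),s]$, together with the ``more than half'' count (not the $\tfrac34$--bound).
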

\begin{proof}
First, let $e/2\neq 2^k$ or $e/2=2$.  
Choosing a $g\in C_G(x)\cap C_G(y)$ it normalizes the subgroup
$C_F(x)=C_F(\ud u_1)=D_0$, so it permutes the homogeneous components
of $D_0$, that is, the subspaces $W_1,W_2,\ldots,W_{e/2}$.
Then it is clear from the construction of $y$ that $g$ also
centralizes $\ud v_1$, so the restriction of $g$ to
$W_1$ is the identity.
As $g$ permutes the subspaces $W_1,W_2,\ldots,W_{e/2}$
it follows that $g$ can be written in a unique way as a product 
$g=\delta_2(g)\pi_2(g)$, where $\delta_2(g)$ is a $2$-block diagonal
matrix, while $\pi_2(g)=\pi(g)\otimes I_2$, where $\pi(g)$ denotes 
the permutation action of $g$ on the set
$\{W_1,W_2,\ldots,W_{e/2}\}$. Similarly to part \textsl{3} of Lemma
(\ref{thm:monlem} one can prove that $\delta_2(g)$ must normalize $F$,
as well.
Now, if $\ud u_i$ appears with a non-zero coefficient in $y$, then the
$i$-th block of $\delta_2(g)$ must be a upper triangular matrix. 
If we choose $s\in S$ such that $s(\ud u_1)=\ud u_i$, then the first
block of the 2-block diagonal matrix
$[\delta_2(g),s]\in QD$ is the same as the $i$-th block of
$\delta_2(g)$. As a upper triangular matrix does have an eigenvalue, 
by part \textsl{6} of Theorem \ref{thm:fitting_nonmon} we get
$[\delta_2(g),s]\in D$, so every block of $[\delta_2(g),s]$, 
in particular, the first one, is scalar matrix. 
Thus, we showed that for any $\ud u_i$ appearing in $y$ the
corresponding block of $\delta_2(g)$ is scalar matrix.
Such $\ud u_i$'-s are in bijection with the elements of some $\Omega_i$'-s
in Theorems \ref{thm:affin_perm} and \ref{thm:kevert}. (see also
Theorem \ref{thm:mon_eneq2^k}
It is easy to check that in any such case more than half of the $\ud u_i$'-s 
appears in $y$, so more than half of the blocks of $\delta_2(g)$ is
scalar matrix. It follows that for any $s\in S$ at least one block of 
$[\delta_2(g),s]\in QD$ is a scalar matrix. Using part  \textsl{6} of
Theorem \ref{thm:fitting_nonmon} again, we get $[\delta_2(g),s]$ is
diagonal matrix for all $s\in S$. Since the first block of
$\delta_2(g)$ is the identity, and $S$ regularly permutes the blocks
we get every block of $\delta_2(g)$ is scalar matrix, that is,
$\delta_2(g)$ is diagonal. Hence $g$ is monomial, and it fixes the 
subspace $V_1=\langle\ud u_1,\ud u_2,\ldots,\ud u_{e/2}\rangle$. 
As $g_{V_1}\in C_{G_1}(x_1)\cap C_{G_1}(y_1)=1_{V_1}$, we have $g$ 
acts on $V_1$ trivially, so $\pi(g)=1$, and $g$ is a diagonal matrix. 
Finally, using that the restriction of $g$ to any $W_i$ is a scalar
matrix, and $g(\ud u_i)=\ud u_i$ for all $i$ it follows that $g=1$,
what we wanted to prove.

In case $e=2^k,\ k\geq 2$ we claim that
$C_F(x)=C_F(\ud v_1)\cap C_F(x_1)\leq D_0$. 
As the set of subspaces $W_1,W_2,W_3,W_4$ corresponds to a subspace of
$S$, it follows that $C_F(x)$ permutes these subspaces.
Now, if  $g\in C_F(x)$ takes $\ud u_i$ into a multiple of $\ud u_j$
for some $\ud u_i,\ud u_j$ occuring in $x$, then $\ud u_j$ is an
eigenvalue of the 2-block diagonal part $\delta_2(g)\in QD$ of $g$,
hence $\delta_2(g)$ is diagonal by part \textsl{6} of
Theorem \ref{thm:fitting_nonmon}. Consequently, $g$ cannot take $\ud
v_1$ into a multiple of some $\ud u_i$. So $C_f(x)$ fixes both $\ud
v_1$ and $x_1$, which proves that $C_F(x)=C_F(\ud v_1)\cap C_F(x_1)\leq D_0$.

It follows that the homogeneous component corresponding to the trivial
representation of $C_F(x)\leq D_0$ is just the subspace 
$W_1\oplus W_2\oplus W_3\oplus W_4$, while the subspace generated by
the other homogeneous components of $C_F(x)$ is 
$W_5\oplus W_6\oplus\cdots\oplus W_{e/2}$.  
Since any $g\in C_G(x)\cap C_G(y)$ normalizes $C_F(x)$,
it permutes these homogeneous components. We get $g$ fixes
both $W_1\oplus W_2\oplus W_3\oplus W_4$ and
$W_5\oplus W_6\oplus\cdots\oplus W_{e/2}$. As $y$ is of the form
$y=\ud u_1+\langle\ud u_5,\ud u_6,\ldots,\ud u_{e/2}\rangle$,
it follows that $g(\ud u_1)=\ud u_1$, so $g$ fixes the subspace $W_1$,
and permutes the subspaces $W_2,\ldots,W_{e/2}$. 
Using the construction of $x$ we get $g(\ud v_1)=\ud
v_1$, so $g$ acts trivially on $W_1$. 
From this point our proof is the same as it was for the previous case.
\end{proof}
\section{Imprimitve linear groups}\label{2bas:imprim}
As before, let $p\neq 2$ prime (or prime power), let $V$ be a finite
vector space over $\FF p$ and $G\leq GL(V)\simeq GL(n,p)$ 
solvable linear group such that $(|G|,|V|)=1$. 
In case of $G$ is a primitive linear group, the previous section gave
us a base $x,y\in V$.
Using this result, in this section we handle the case, when $G$ is not
primitive as a linear group.\par 
It follows from Maschke'-s theorem that $V$ is an completely reducible
$G$-module. The next obvious lemma reduce the problem to irreducible 
$G$-modules.
\begin{lem}
Let $V=V_1\oplus V_2$ the sum of two $G$-invariant subspaces. Now,
$G/C_G(V_i)\leq GL(V_i)$ acts faithfully on $V_i$. For $i=1,2$, set 
$x_i,y_i\in V_i$ such that $C_G(x_i)\cap C_G(y_i)=C_G(V_i)$.
Then $C_G(x_1+x_2)\cap C_G(y_1+y_2)=1$.
\end{lem}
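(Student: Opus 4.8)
The statement is essentially a direct-sum bookkeeping lemma, so the proof is short. The plan is as follows.

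Let $g \in C_G(x_1+x_2) \cap C_G(y_1+y_2)$. Since $V = V_1 \oplus V_2$ with both summands $G$-invariant, $g$ acts on each $V_i$ separately, and the component of $g(x_1+x_2)$ lying in $V_i$ is exactly $g(x_i)$; likewise for $y$. Hence from $g(x_1+x_2) = x_1+x_2$ we read off $g(x_1) = x_1$ and $g(x_2) = x_2$, and from $g(y_1+y_2) = y_1+y_2$ we get $g(y_1) = y_1$ and $g(y_2) = y_2$. Therefore $g \in C_G(x_1) \cap C_G(y_1) = C_G(V_1)$ and simultaneously $g \in C_G(x_2) \cap C_G(y_2) = C_G(V_2)$, so $g$ acts trivially on $V_1$ and on $V_2$, hence on all of $V = V_1 \oplus V_2$. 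Since $G \leq GL(V)$ acts faithfully, $g = 1$. This proves $C_G(x_1+x_2) \cap C_G(y_1+y_2) = 1$.

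The only point that needs a word of care is the assertion $C_G(V_1) \cap C_G(V_2) = 1$: this is immediate because an element centralizing both invariant complements centralizes their direct sum $V$, and $G$ is a subgroup of $GL(V)$. The remark in the statement that $G/C_G(V_i) \leq GL(V_i)$ acts faithfully on $V_i$ is just the observation that makes the hypothesis "choose $x_i, y_i \in V_i$ with $C_G(x_i) \cap C_G(y_i) = C_G(V_i)$" the correct translation of "find a two-element base for the faithful action on $V_i$"; it is not actually needed for the argument above beyond guaranteeing such $x_i, y_i$ exist when we apply the lemma.

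There is no real obstacle here — the lemma is purely formal and its role is organizational: combined with Maschke's theorem it reduces Theorem~\ref{thm:main} to the case of an irreducible $G$-module, which in turn (being solvable) is induced from a primitive one, the situation handled in the previous section. If anything, the only thing to be slightly careful about in the write-up is not to conflate $C_G(x_i)\cap C_G(y_i)$ computed inside $G$ with the corresponding intersection inside $G/C_G(V_i) \leq GL(V_i)$; working throughout with subgroups of $G$ avoids this entirely.
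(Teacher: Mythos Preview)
Your proof is correct and matches the paper's approach exactly; in fact the paper calls this lemma ``obvious'' and omits the proof entirely, and your argument is precisely the one-line direct-sum bookkeeping the authors had in mind.
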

Let $G\leq GL(V)$ be an irreducible, imprimitive linear group.
Thus, there is a decomposition $V=\oplus_{i=1}^k V_i$
such that $k\geq 2$ and $G$ permutes the subspaces $V_i$ in a
transitive way. We can assume that the decomposition cannot be refined.
For each $1\leq i\leq k$ let $H_i=\{g\in
G\;|\; gV_i=V_i\}$ be the stabilizer of $V_i$ in $G$. 
Then $H_i/C_{H_i}(V_i)\leq GL(V_i)$ is a linear group, and the 
subgroups $H_i$ are conjugate in $G$. 
Of course, $(|H_1|,|V_1|)=1$, so, using the previous section
we can find vectors $x_1,y_1\in V_1$ such that 
$C_{H_1}(x_1)\cap C_{H_1}(y_1)=C_{H_1}(V_1)$.
Let $\{g_1=1, g_2,\ldots,g_k\}$ be a set of right coset
representatives to $H_1$ in $G$ such that 
$V_i=g_iV_1$ for all $1\leq i\leq k$-ra, and let $x_i=g_ix_1$,
 $y_i=g_iy_1$. It is clear that
$H_i=H_1^{g_i^{-1}}$ and $C_{H_i}(x_i)\cap C_{H_i}(y_i)=C_{H_i}(V_i)$.\par
Now, $N=\cap_{i=1}^k H_i$ is a normal subgroup of $G$, the quotient group 
$G/N$ acts faithfully and transitively on the set 
$\{V_1,V_2,\ldots,V_k\}$, and $|G/N|$ is coprime to $p$. 
Using Theorem \ref{thm:perm}, we can choose a vector
$(a_1,a_2,\ldots a_k)\in \FF p^k$ such that (to the above permutation action)
only the identity element of $G/N$ fixes this vector.
\begin{thm}
Let the vectors $x,y\in V$ be defined as
\[
x=\sum_{i=1}^k x_i,\qquad\qquad y=\sum_{i=1}^k (y_i+a_ix_i).
\]
Then $C_G(x)\cap C_G(y)=1$.
\end{thm}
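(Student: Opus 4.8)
The plan is to reduce the imprimitive case to two ingredients that have already been established: the per-block constructions $x_i,y_i$ inside $V_i$ coming from the (primitive or lower-dimensional imprimitive) case, and the $G/N$-regular vector $(a_1,\dots,a_k)\in\FF p^k$ coming from Theorem \ref{thm:perm}. First I would take $g\in C_G(x)\cap C_G(y)$ and show that $g$ must permute the blocks $V_1,\dots,V_k$ among themselves in a way compatible with fixing $x$. Since $g$ permutes the summands $V_i$ (as $G$ does), say $g(V_i)=V_{\sigma(i)}$, and since $x=\sum x_i$ with each $x_i\in V_i$ a ``marked'' nonzero vector, the equality $g(x)=x$ forces $g(x_i)=x_{\sigma(i)}$ for each $i$. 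The point of also fixing $y=\sum_i(y_i+a_ix_i)$ is that the component of $y$ in $V_{\sigma(i)}$ is $y_{\sigma(i)}+a_{\sigma(i)}x_{\sigma(i)}$, while $g$ sends the $V_i$-component $y_i+a_ix_i$ there; so $g(y_i)+a_i x_{\sigma(i)} = y_{\sigma(i)}+a_{\sigma(i)}x_{\sigma(i)}$.

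The crux is to deduce $\sigma=\mathrm{id}$, i.e.\ that $gN$ fixes $(a_1,\dots,a_k)$. Here one needs to know that the scalars $a_i$ are read off from $g(y_i)$ unambiguously. I would argue as follows: comparing $V_{\sigma(i)}$-components above gives $g(y_i)=(y_{\sigma(i)}+a_{\sigma(i)}x_{\sigma(i)})-a_ix_{\sigma(i)}=y_{\sigma(i)}+(a_{\sigma(i)}-a_i)x_{\sigma(i)}$. Now observe that in each block the pair $(x_j,y_j)$ was constructed so that the only elements of $H_j/C_{H_j}(V_j)$ fixing both $x_j$ and $y_j$ are trivial on $V_j$; in particular no nontrivial element of the stabilizer of $V_j$ sends $y_j$ to $y_j+cx_j$ for $c\neq 0$ unless $c=0$ — this is exactly the coprimality trick used earlier (in the Lemma reducing to $C_G(A)=G$): if an element $h$ of order $m$ acts on $V_j$ fixing $x_j$ and sending $y_j\mapsto y_j+cx_j$, then iterating gives $mc x_j=0$, and $(m,p)=1$ forces $c=0$. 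Applying this to a suitable power/conjugate of $g$ that stabilizes a block, one forces $a_{\sigma(i)}=a_i$ for every $i$, i.e.\ $\sigma$ fixes the vector $(a_1,\dots,a_k)$, hence $\sigma$ (the image of $g$ in $G/N$) is trivial and $g\in N$.

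Once $g\in N$, it fixes every $V_i$, so its restriction lies in $C_{H_i}(V_i\text{-marking})$; more precisely $g(x_i)=x_i$ and, from the component equation with $\sigma=\mathrm{id}$, $g(y_i)=y_i$ for all $i$. Therefore $g\in C_{H_i}(x_i)\cap C_{H_i}(y_i)=C_{H_i}(V_i)$ for every $i$, so $g$ acts trivially on each $V_i$, hence on $V=\oplus V_i$, so $g=1$. That completes $C_G(x)\cap C_G(y)=1$.

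The main obstacle I anticipate is the middle step: making rigorous the claim that fixing $y$ pins down the permutation $\sigma$, i.e.\ separating the ``which block goes where'' information from the ``what happens inside a block'' information. The clean way is the coprimality argument just sketched — one has to be a little careful that the element whose order one exploits genuinely stabilizes a single block (replace $g$ by an appropriate power so that it does, or pass to $\langle g\rangle\cap N$), and that the marked vector $x_j$ is genuinely nonzero so that $mcx_j=0$ with $(m,p)=1$ really yields $c=0$. Everything else (the component-by-component bookkeeping and the final appeal to the block constructions) is routine.
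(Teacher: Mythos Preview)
Your overall strategy matches the paper's exactly: do the component-by-component bookkeeping to get $g(x_i)=x_{\sigma(i)}$ and $g(y_i)=y_{\sigma(i)}+(a_{\sigma(i)}-a_i)x_{\sigma(i)}$, use coprimality to force $a_{\sigma(i)}=a_i$, deduce $g\in N$ from the regularity of $(a_1,\dots,a_k)$, and finish with the per-block base property. The first and last parts are fine.

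The gap is precisely where you flagged it, and your proposed fix does not work. You want an element $h$ of $p'$-order that stabilizes a block, fixes $x_j$, and sends $y_j\mapsto y_j+cx_j$ with $c=a_{\sigma(i)}-a_i$. But neither ``an appropriate power of $g$'' nor ``pass to $\langle g\rangle\cap N$'' produces such an $h$: if $(i_1,\dots,i_r)$ is a $\sigma$-cycle, then iterating your component equation gives
\[
g^{\,l}(y_{i_1})=y_{i_{l+1}}+(a_{i_{l+1}}-a_{i_1})x_{i_{l+1}},
\]
so $g^r(y_{i_1})=y_{i_1}$ on the nose --- the shifts telescope to zero and all information about the individual differences $a_{\sigma(i)}-a_i$ is lost. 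A genuine conjugate $h^{-1}gh$ only stabilizes a block if $\sigma$ already has a fixed point, which you cannot assume.

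What the paper does instead is manufacture the right element directly from the coset representatives: for $gV_i=V_j$ set $g'=g_j^{-1}g\,g_i\in G$. This is neither a power nor a conjugate of $g$, but it lies in $H_1$, satisfies $g'(x_1)=x_1$ and $g'(y_1)=y_1+(a_j-a_i)x_1$, and has $p'$-order since $g'\in G$. Now your iteration argument (equivalently: the restriction of $g'$ to $\langle x_1,y_1\rangle$ is the unipotent $\left(\begin{smallmatrix}1&a_j-a_i\\0&1\end{smallmatrix}\right)$, which has order $p$ unless $a_j=a_i$) goes through verbatim. With this one correction your proof is the paper's proof.
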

\begin{proof}
Let $g\in C_G(x)\cap C_G(y)$. Assuming that $gV_i=V_j$ for some
$1\leq i,j\leq k$ we get $gx_i=x_j$ and $g(y_i+a_ix_i)=(y_j+a_jx_j)$.
Choose $g'=g_j^{-1}gg_i\in G$. Ekkor
\begin{equation}\label{eq:imprim:unitri}
g'x_1=x_1 \textrm{\quad and\quad} g'(y_1+a_ix_1)=(y_1+a_ix_1)+(a_j-a_i)x_1,
\end{equation}
so $g'$ stabilizes the subspace $\left<x_1,y_1\right>\leq V_1$. 
If $y_1=cx_1$ for some $c\in \FF p$, then $g'y_1=y_1$. 
Using the identity (\ref{eq:imprim:unitri}) we get $a_j=a_i$. Otherwise,
$x_1,y_1+a_ix_1$ form a basis of the subspace $\langle x_1,y_1\rangle$
which is a two dimensional $g'$-invariant subspace. 
With respect to the basis $x_1,y_1+a_ix_1$, the restriction of 
$g'$ to this subspace has matrix form 
\[
\begin{pmatrix}
1&a_j-a_i\\
0&1\\
\end{pmatrix}.
\]
If $a_j-a_i\neq 0$, then this matrix has order $p$, so $p$ divides the
order of $g'\in G$, a contradiction. Hence in any case 
$a_i=a_j$ holds for $gV_i=V_j$, which exactly means that $gN\in G/N$ 
stabilizes the vector $(a_1,a_2,\ldots,a_k)$. 
It follows that $g\in N$. So $gx_i=x_i$ and $gy_i=y_i$ holds for any 
$1\leq i\leq k$, and $g\in\cap_{i=1}^k C_{H_i}(V_i)=C_G(V)=1$ follows.
\end{proof}

\bigskip\bigskip
\begin{tabular}{l@{\qquad\quad}l}
Zolt\'an Halasi                  &K\'aroly Podoski\\ 
Central European University      &Alfr\'ed R\'enyi Institute of Mathematics\\ 
Department of Mathematics        &Hungarian Academy of Sciences\\
 and Its Applications		 &\\ 
H-1051 Budapest			 &H-1364 Budapest\\
N\'ador utca 9.			 &P. O. Box 127\\
Hungary				 &Hungary\\ 
e-mail: haca@cs.elte.hu		 &e-mail: pcharles@cs.elte.hu
\end{tabular}

\begin{thebibliography}{19}
\bibitem{dolfi} S. Dolfi, Intersections of odd order Hall subgroups,
Bull. London Math. Soc. \textbf{37} (2005) 67--74.
\bibitem{gap} The GAP Group, GAP --- Groups, Algorithms, and
Programming, Version 4.3; 2002 (http://www.gap-system.org)
\bibitem{gluck} D. Gluck, Trivial set-stabilizers in finite
permutation groups, Canad. J. Math. \textbf{35} (1983), 59--??.
\bibitem{gluck&magaard} D.\ Gluck and K.\ Magaard, 
Base sizes and regular orbits for coprime affine permutation groups, 
J. London Math. Soc. (2) \textbf{58} (1998), 603--618.                 
\bibitem{hartley} B. Hartley and A. Turull, On characters of coprime
operator groups and the Glaubermann character correspondence, J. Reine
Angew. Math.  \textbf{451} (1994), 175--219.
\bibitem{isaacs} I. M. Isaacs, Large orbits in actions of nilpotent
groups, Proc. Amer. Math. Soc. \textbf{127} (1999) 45--50.
\bibitem{matsuyama} H. Matsuyama, Another proof of Gluck's theorem,
J. Algebra \textbf{247} (2002) 703--706.
\bibitem{moreto} A. Moreto and T. Wolf, Orbit sizes, character degrees
and Sylow subgroups, Adv. Math. \textbf{184} (2004) 18--36.
\bibitem{p3} P.\ P.\ P\'alfy, Bounds for linear groups of odd order,
Proc.  Second Internat. Group Theory Conf., Bressanone/Brixen 1989,
Suppl. Rend. Circ. Mat. Palermo \textbf{23} (1990) 253--263.
\bibitem{pyber} L.\ Pyber, 'Asymptotic results for permutation
groups', Groups and computation, DIMACS Ser.\ Discrete Math.\ 
Theoret.\ Comp.\ Sci.\ 11 (ed.\ L.\ Finkelstein and W.\ Kantor, Amer.\ 
Math.\ Soc.,\ Providence, RI, 1993) 197--219.
\bibitem{seress} \'A. Seress, The minimal base size of primitive
solvable permutation groups, J. London Math. Soc. \textbf{53} (1996)
243--255.
\bibitem{sup} D. A. Suprunenko, Matrix groups, Amer. Math. Soc.,
Providence, RI, 1976.
\bibitem{wolf} T. Wolf, Large orbits of supersolvable linear groups,
J. Algebra \textbf{215} (1999) 235--247.
\end{thebibliography}
\end{document}